\newtheorem{thm}{Theorem}[section]
\newtheorem{lem}[thm]{Lemma}
\newtheorem{cor}[thm]{Corollary}
\newtheorem{conj}[thm]{Conjecture} 
\newtheorem{defn}[thm]{Definition}
\theoremstyle{definition}
\newtheorem{definition}[thm]{Definition}
\newtheorem{example}[thm]{Example}
\theoremstyle{remark}
\newtheorem{remark}[thm]{Remark}
\DeclareMathOperator{\less}{<_{\textrm{$\sigma$}}}
\DeclareMathOperator{\lessalt}{<_{alt}}
\DeclareMathOperator{\gessalt}{>_{alt}}
\DeclareMathOperator{\lesseq}{\le_{\textrm{$\sigma$}}}
\DeclareMathOperator{\gess}{>_{\textrm{$\sigma$}}}
\DeclareMathOperator{\gesseq}{\ge_{\textrm{$\sigma$}}}
\renewcommand{\S}{\mathcal{S}}
\DeclareMathOperator{\Int}{Int}
\newcommand{\C}{\mathcal{C}}
\DeclareMathOperator{\Al}{Allow}
\newcommand{\N}{\overline{N}}
\newcommand{\card}[1]{{\lvert #1 \rvert}} 	
\newcommand{\Wk}{\mathcal{W}_{k}}
\DeclareMathOperator{\asc}{asc}
\DeclareMathOperator{\des}{des}
\DeclareMathOperator{\Pat}{Pat}
\newcommand{\wmax}{\Omega_\sigma}
\newcommand{\wmin}{\omega_\sigma}
\newcommand{\multiset}[1]{\{\!\{#1\}\!\}}
\newcommand{\deleted}[1]{}
\newcounter{count}
\newcommand{\permutation}[1]{
\setcounter{indice}{0};
\foreach \i in {#1}
\addtocounter{indice}{1};
\addtocounter{indice}{1}
\draw [help lines] (0,0) grid (\theindice-1,\theindice-1);
\setcounter{indice}{1};
\foreach \i in { #1 } { 
\draw (\theindice-.5,\i-.5) [fill] circle (.18);
\addtocounter{indice}{1};
}
\addtocounter{indice}{-1};
}
\newcounter{indice}
\title{Characterizations and Enumerations of Patterns of Signed Shifts}
\author{Sergi Elizalde}%
\email{sergi.elizade@dartmouth.edu}%
\thanks{The first author was partially supported by Simons Foundation grant \#280575.}
\author{Katherine Moore}%
\email{katherine.e.moore.gr@dartmouth.edu}%
\address{Department of Mathematics, Dartmouth College, Hanover, NH 03755, USA}%
\begin{document}

\maketitle

\begin{abstract}

Signed shifts are generalizations of the shift map in which, interpreted as a map from the unit interval to itself sending $x$ to the fractional part of $Nx$, some slopes are allowed to be negative.  Permutations realized by the relative order of the elements in the orbits of these maps have been studied recently by Amig\'o, Archer and Elizalde.   In this paper, we give a complete characterization of the permutations (also called patterns) realized by signed shifts. In the case of the negative shift, 
which is the signed shift having only negative slopes, we use our characterization to give an exact enumeration of these patterns. Finally, we improve the best known bounds for the number of patterns realized by the tent map, and calculate the topological entropy of signed shifts using these combinatorial methods.  
\end{abstract}

\section{Introduction}\label{sec:intro}

Permutations realized by one-dimensional dynamical systems give  insight into their short-term behavior and provide a powerful tool to distinguish random from deterministic time series~\cite{Amigobook}.  The close relationship between permutations and topological entropy, an important measure of complexity of a dynamical system, gives us an alternative derivation of this quantity.  

Given a linearly ordered set $X$, a map $f:X\to X$, and $x\in X$, consider the finite sequence $x,f(x),f(f(x)),\dots,f^{n-1}(x)$. If these $n$ values are different, then their relative order determines a permutation $\pi\in\S_n$ (we use $\S_n$ to denote the symmetric group on $\{1,2,\dots,n\}$), obtained by replacing the smallest value by a~$1$, the second smallest by a~$2$, and so on. 
We write $\Pat(x,f,n)=\pi$, and we say that $\pi$ is an {\em allowed pattern} of $f$, or that $\pi$ is {\em realized} by $f$, and also that $x$ {\em induces} $\pi$. For example, if $f(x) = \{ 3 x\}$, where $\{y\}$ denotes the fractional part of $y$ (see the left of Figure~\ref{fig:graphs} for a graph of this function),  and $x = .12$, we obtain $(x, f(x), f^2(x), f^3(x)) = (.12, .36, .08, .24)$,
and so $\Pat(f,x,4)= 2413$.  If there are repeated values in the first $n$ iterations of $f$ starting with $x$, then $\Pat(x,f,n)$ is not defined. Denote the set of allowed patterns by $\Al_n(f) = \{ \Pat(x, f, n) : x \in X\} \subseteq \S_n$ and $\Al(f) = \bigcup_{n \geq 1} \Al_n(f)$.  

It was shown in~\cite{BKP} that if $f$ is a piecewise monotone map on the unit interval, then the number of allowed patterns of length $n$ grows at most exponentially in $n$, implying the existence of forbidden patterns, that is, permutations that are not realized by $f$. Additionally, the logarithm of the growth rate of the number of allowed patterns equals the topological entropy of $f$. 

For a general piecewise monotone map, it is difficult to characterize and enumerate the set of allowed patterns.  Indeed, these questions have only been answered for functions that are variations on the shift.  In~\cite{EliShift}, a characterization and enumeration is given in the case that $f$ is a positive shift, that is, $f(x) = \{N x\}$ for some integer $N \geq 2$. Some progress when $f$ is a symmetric tent map has been made in~\cite{EliLiu}, and more recently in~\cite{KASLsymtent}. A characterization of allowed patterns when $f(x) = \{ \beta x\}$ for a real number $\beta>1$ was given in~\cite{EliBeta}. The case of negative $\beta$ has been addressed recently in~\cite{ECWSnegbeta} and~\cite{EMnegbeta}.  

An important class of dynamical systems are signed shifts, which generalize the shift, the negative shift, and the tent map. We will introduce them in Section~\ref{sec:signed}. A first approach to characterizing the allowed patterns of signed shifts appears in~\cite{AmigoSigned}, although it is cumbersome and incomplete.  An improvement is given in~\cite{KAchar}, yet the characterization provided there is not well suited for enumeration results. In this paper we provide a simple and concise characterization of the permutations realized by arbitrary signed shifts, which is given in Theorem~\ref{characterization} and proved in Section~\ref{sec:char}. Section~\ref{sec:intervals} describes, for any signed shift, the set of points inducing each pattern. In Section~\ref{sec:negativeshift} we give a formula for the smallest $N \ge 2$ such that a given pattern is realized by the negative shift $f(x) = \{-N x\}$. 
In Section~\ref{sec:enumeration} we provide an exact formula for the number of permutations realized by the negative shift,
which relies on our characterization of allowed patterns. In Section~\ref{sec:tent} we improve the best known bounds, given by Archer~\cite{KAchar},  for the number of patterns realized by the tent map.  Finally, in Section~\ref{sec:entropy} we provide an alternative derivation of the topological entropy of an arbitrary signed shift using permutations. The related problem of characterizing permutations realized by periodic orbits of signed shifts was studied in~\cite{KACyclicSigned}.

\section{Signed Shifts}\label{sec:signed}

Signed shifts are indexed by their {\em signature}, which is a $k$-tuple of signs $\sigma = \sigma_0 \sigma_1 \dots \sigma_{k-1} \in \{ + , - \}^k$, for $k\ge2$. Let $T_{\sigma}^+ = \{ t : \sigma_t = +\}$ and $T_{\sigma}^- = \{ t : \sigma_t = - \}$.  
Before defining the signed shift, which is a map on infinite words, we describe its counterpart as a map on the unit interval. Define the {\em signed sawtooth map} $M_{\sigma}: [0, 1] \rightarrow [0, 1]$, for each $0\le t\le k-1$ and $x \in [ \frac{t}{k}, \frac{t + 1}{k})$ (where the right endpoint of the interval is included when $t = k-1$), by letting
$$M_{\sigma}(x) = 
  \begin{cases}
kx - t & \text{if } t \in  T_{\sigma}^+, \\
t + 1 - kx & \text{if } t \in T_{\sigma}^-. 
\end{cases} $$
Examples of graphs of $M_\sigma(x)$ for some signatures $\sigma$ appear in Figure~\ref{fig:graphs}. 

\begin{figure}[htbp]
\centering
  \begin{tikzpicture}
      \begin{scope}[xshift = 0 cm, scale=.8] 
        \draw[->, thick](0, 0)--(0, 3.1875);
        \draw[->, thick](0, 0)--(3.1875, 0);
     	\draw[](0, 0)--(1, 3);
        \draw[](1, 0)--(2, 3);
        \draw[](2, 0) -- (3, 3);
      \end{scope}
	\begin{scope}[xshift = 3.5 cm, scale=.6]
        \draw[->, thick](0, 0)--(0, 4.25);
        \draw[->, thick](0, 0)--(4.25, 0);
     	\draw[](0, 4)--(1, 0);
        \draw[](1, 4)--(2, 0);
        \draw[](2, 4) -- (3, 0);
        \draw[](3, 4)--(4, 0);
      \end{scope}      
        \begin{scope}[xshift = 7 cm, scale=.6]
        \draw[->, thick](0, 0)--(0, 4.25);
        \draw[->, thick](0, 0)--(4.25, 0);
     	\draw[](0, 0)--(1, 4);
        \draw[](1, 4)--(2, 0);
        \draw[](2, 4) -- (3, 0);
        \draw[](3, 0)--(4, 4);
        \end{scope}
  \begin{scope}[xshift = 10.5 cm, scale=.6]
        \draw[->, thick](0, 0)--(0, 4.25);
        \draw[->, thick](0, 0)--(4.25, 0);
     	\draw[](0, 0)--(2, 4);
        \draw[](2, 4)--(4, 0);
      \end{scope}     
    \end{tikzpicture}
    \caption{The graphs of $M_\sigma$ for $\sigma = {+}{+}{+}$, $\sigma = {-}{-}{-}{-}$, $\sigma = {+}{-}{-}{+}$, and $\sigma = {+-}$, respectively. }
    \label{fig:graphs}
\end{figure}
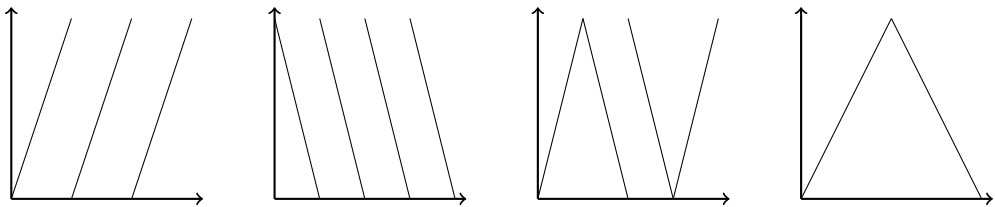

The signed shift $\Sigma_{\sigma}$, which is the main object of study in this paper, is closely related to $M_\sigma$, but defined on infinite $k$-ary words words instead of on real numbers. Let $\Wk$ be the set of infinite words on the alphabet $\{0, 1, \ldots, k{-}1\}$. We define a linear order $\less$ on $\Wk$ that depends on the signature $\sigma$.  

\begin{defn}\label{def:less} Let $\sigma \in \{ + , - \}^k$. For words $v, w \in \Wk$, define $$v_1 v_2 v_3 \ldots \less w_1w_2w_3 \ldots$$ if one of the following holds:
\begin{itemize}
\item $v_1 < w_1$, 
\item $v_1 = w_1 \in T_{\sigma}^+$ and $v_2 v_3 \ldots \less w_2 w_3 \ldots$, or 
\item $v_1 = w_1 \in T_{\sigma}^-$ and $v_2 v_3 \ldots \gess w_2 w_3 \ldots$.
\end{itemize}
Define the {\em signed shift} $\Sigma_{\sigma}: (\Wk, \less) \mapsto (\Wk, \less)$ by letting $\Sigma_{\sigma}(w_1 w_2 w_3 \ldots ) = w_2 w_3 \ldots$, where the linear order on $\Wk$ is $\less$.
\end{defn}

\begin{example} Let $\sigma = +-$ and $w = 1101^\infty \in \mathcal{W}_2$.  Prepending $1$ to the inequality $101^\infty \gess 01^\infty$,  we get $1101^\infty \less 101^\infty$.  Similarly, $ 01^\infty \less 1^\infty$ implies that $101^\infty \gess 1^\infty$ and $1101^\infty \less 1^\infty$.  Hence, 
$$01^\infty \less 1101^\infty \less 1^\infty \less 101^\infty,$$
that is,
$$ w_3 w_4 \ldots \less w_1 w_2 \ldots \less w_4 w_5 \ldots \less w_2 w_3 \ldots.$$
We conclude that $\Pat(w, \Sigma_{\sigma}, 4) = 2413$, and so $2413 \in \Al(\Sigma_{\sigma}).$
\end{example}

In this paper we focus on the signed shift $\Sigma_{\sigma}$, which is well suited to our combinatorial analysis. However, our results apply to the signed sawtooth map $M_\sigma$ as well, since these maps have the same allowed patterns.

\begin{lem} For any $\sigma \in \{ + , - \}^k$, we have $\Al(\Sigma_\sigma)=\Al(M_\sigma)$. 
\end{lem}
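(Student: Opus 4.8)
The plan is to construct an explicit order-preserving bijection between a suitable subset of $\Wk$ (under $\less$) and the unit interval (under the usual order), and to check that this bijection conjugates $\Sigma_\sigma$ with $M_\sigma$ in the sense needed to transfer allowed patterns. Concretely, for an infinite word $w = w_1 w_2 \ldots \in \Wk$, define $\phi(w) = \sum_{i \ge 1} a_i k^{-i}$, where the digit $a_i$ is obtained from $w_i$ by ``reflecting'' it whenever it is preceded by an odd number of indices lying in $T_\sigma^-$; that is, $a_i = w_i$ if $\#\{j < i : w_j \in T_\sigma^-\}$ is even, and $a_i = k - 1 - w_i$ if that count is odd. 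This is precisely the map that records, for a point $x \in [0,1]$ under iteration of $M_\sigma$, the base-$k$ digit of each iterate $M_\sigma^{i-1}(x)$ before versus after the sawtooth reflections unwind; it is the standard ``itinerary'' correspondence adapted to the sawtooth map.

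First I would verify that $\phi$ is weakly order-preserving from $(\Wk,\less)$ to $([0,1],\le)$: this is an induction on the first index where two words differ, using the three cases in Definition~\ref{def:less}. If $v_1 < w_1$ then $\phi(v) < \phi(w)$ unless there is a carry issue at the boundary, which only happens on the measure-zero set of words that are eventually constant in a degenerate way; if $v_1 = w_1 \in T_\sigma^+$ the inequality on the tails is preserved with the same parity, and if $v_1 = w_1 \in T_\sigma^-$ the tail inequality flips, which matches the reflection $a_i \mapsto k-1-a_i$ flipping the order. Next I would observe that $\phi \circ \Sigma_\sigma = M_\sigma \circ \phi$ wherever everything is defined: deleting $w_1$ shifts all digits and, crucially, toggles the parity of the reflection count for every subsequent digit exactly when $w_1 \in T_\sigma^-$, which is exactly what applying $M_\sigma$ (with its possible reflection on the branch containing $\phi(w)$) does to the base-$k$ expansion. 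Finally, given any $x \in [0,1]$, I would choose a word $w$ with $\phi(w) = x$; then $\Pat(w,\Sigma_\sigma,n) = \Pat(x,M_\sigma,n)$ whenever the latter is defined, giving $\Al(M_\sigma) \subseteq \Al(\Sigma_\sigma)$, and conversely any word $w$ inducing a pattern $\pi$ under $\Sigma_\sigma$ has the property that the iterates $\Sigma_\sigma^{i}(w)$ are distinct and, being distinct as words, map to distinct reals under $\phi$ (after perturbing off the countable bad set if necessary), so $\pi$ is also realized by $M_\sigma$.

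The main obstacle is the countable set of ``ambiguous'' points: a dyadic-type rational in $[0,1]$ has two base-$k$ expansions, so $\phi$ is not injective, and conversely a word like $w$ that is eventually periodic with the periodic part sitting on a branch endpoint can have $\phi(w)$ equal to a value whose $M_\sigma$-orbit behaves differently from the $\less$-orbit of $w$. The fix is routine but must be stated carefully: the set of words whose $\Sigma_\sigma$-orbit ever hits such an endpoint is countable, hence so is the set of reals that are problematic under $M_\sigma$; since a pattern of length $n$ that is realized is realized on an open set of points (for $M_\sigma$) or on a ``$\less$-interval'' of words (for $\Sigma_\sigma$), one can always avoid the bad set and pass the pattern across. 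I would handle this by first proving the containment $\Al_n(M_\sigma) \subseteq \Al_n(\Sigma_\sigma)$ using that $M_\sigma$-preimages of generic points are genuine words, and then the reverse containment using that the set of $x$ inducing a fixed $\pi$ under $M_\sigma$ is determined by finitely many strict inequalities among the iterates, which remain strict after replacing $x$ by $\phi(w)$ for a generic $w$ inducing $\pi$ under $\Sigma_\sigma$. I would keep the write-up short, since this lemma is folklore for the $\sigma = {+}{+}\cdots{+}$ case (it appears in~\cite{EliShift}) and the only new ingredient is bookkeeping the sign flips.
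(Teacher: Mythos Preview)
Your approach is essentially the same as the paper's: both establish an order-preserving (semi)conjugacy between $(\Wk,\less,\Sigma_\sigma)$ and $([0,1],<,M_\sigma)$ and then deal with the countable set of ambiguous endpoints by perturbation. The only difference is that the paper cites the order-isomorphism between $[0,1]$ and an explicitly described subset $\mathcal{W}_k^0\subset\Wk$ from~\cite{KAchar} rather than constructing $\phi$ by hand, and then argues separately that the excluded words $\Wk\setminus\mathcal{W}_k^0$ (which correspond to orbits hitting branch endpoints) contribute no new patterns; your explicit digit-flipping formula for $\phi$ and your treatment of the bad set accomplish the same thing with slightly more work but no external citation.
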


\begin{proof} Let
$$\mathcal{W}_k^0 =   \begin{cases}
\{ w \in \Wk : w \neq w_{[1, j-2]}0 \wmax \text{ for any } j \geq 3 \}& \text{if } \sigma_0 = \sigma_{k-1}= -,\\
 \{ w \in \Wk : w \neq w_{[1, j-1]} \wmax \text{ for any } j \geq 2\} & \text{otherwise},
\end{cases} $$
where $\wmax$ is the largest word in $\Wk$ under $\less$, given in Equation~\eqref{eq:wmax}.  It is shown in~\cite{KAchar} that $(M_{\sigma},<)$
is order-isomorphic to $(\mathcal{W}_{k}^0, \less)$, that is, there is an order-preserving bijection $\phi: [0, 1] \rightarrow \Wk^0$ such that  $\phi (M_{\sigma}(x)) = \Sigma_{\sigma}(\phi (x))$ for all $x$.  As a consequence, $M_\sigma$ has the same allowed patterns as the map $\Sigma_\sigma$ restricted to $\mathcal{W}_{k}^0$. Finally, we claim that the words in $\Wk \setminus \mathcal{W}_k^0$ do not contribute additional allowed patterns of $\Sigma_\sigma$, since they simply correspond to the distinct representations of points $x$ such that $f^{j-1}(x) = 1$.  

Indeed, since $M_{\sigma}(\frac{t}{k}) \in \{0, 1\}$, if a point $x$ satisfies $\Pat(x,M_{\sigma}, n)  = \pi$ and $M_{\sigma}^i(x)= \frac{t}{k}$ for some $i$, we must have $n \leq i + 2$.  In this case, one can take another point sufficiently close to $x$ that also induces $\pi$ under $M_{\sigma}$ and whose orbit does not contain an endpoint $\frac{t}{k}$.  Then, using the order-isomorphism $\phi$, we obtain a word in $\mathcal{W}_k^0$ inducing $\pi$.  We conclude that $\Al(\Sigma_{\sigma}) = \Al(M_{\sigma})$.    
\end{proof}

When $\sigma = +^k$ (we use this notation to denote $k$ copies of the $+$ sign), $\Sigma_\sigma$ is called the {\em $k$-shift} or {\em positive shift}, and the order $\less$ is the lexicographic order.  The signed shift with signature $\sigma = -^k$ is called the {\em $-k$-shift} or {\em negative shift}.  The shift with signature $\sigma = +-$ is the well-known tent map.   

\deleted{Since $M_\sigma$ and $\Sigma_{\sigma}$ are order-isomorphic except at the points of discontinuity of $M_\sigma$, and these points do not influence the realized permutations, we have $\Al(M_{\sigma}) = \Al(\Sigma_{\sigma})$.  Indeed, any pattern $\pi$, realized by a finite sequence $x, f(x), f^{2}(x), \ldots, f^{n-1}(x)$  that includes one of the endpoints $\frac{t}{k}$ for some $0 \leq t \leq k$ is also realized by some point $x'$ arbitrarily close to $x$, and $\Pat(x', M_{\sigma}, n) = \pi$. For our combinatorial analysis, it will be more suitable to work with the map $\Sigma_{\sigma}$.  }

Throughout this paper, we write $w=w_1w_2\dots$ and use the notation $w_{[i,j]}=w_iw_{i+1}\dots w_{j}$ and $w_{[i, \infty)} = w_i w_{i+1} \dots$.  If $d$ is a finite word, then $d^m$ denotes concatenation of $d$ with itself $m$ times, and $d^{\infty}$ denotes the corresponding infinite periodic word.  A finite word $d$ is \textit{primitive} if it cannot be written as a power of any proper subword, i.e. it is not of the form $d = a^m$ for any $m > 1$ and finite word $a$.  

\section{Characterization of Patterns of Signed Shifts}\label{sec:char}

In this section we give a characterization of the permutations realized by signed shifts. The first step is to apply a transformation on permutations that was introduced in~\cite{EliShift}.
 Let $\C_{n}^{\star}$ be the set of cyclic permutations of $\{1,2,\dots,n\}$ with a distinguished entry. We use the symbol ${\star}$ in place of the distinguished entry since its value can be recovered from the other entries, and we will frequently ignore the value replaced by the $\star$.  We will use both one-line notation and cycle notation while describing elements of $\C_{n}^{\star}$, which we call {\em marked cycles}. For example,
the cycle $(2,5, 1,4, 3)=45231$ with the entry 2 marked is denoted by $({\star}, 5, 1, 4, 3)=45{\star}31\in\C_5^{\star}$.

Consider the bijection from $\S_n$  to $\C_n^{\star}$ defined by $\pi \mapsto \hat{\pi}^{\star}$ where, if $\pi = \pi_1 \pi_2 \dots \pi_n$ in one-line notation, then $\hat{\pi}^{\star} = ({\star}, \pi_2, \dots, \pi_n)$ in cycle notation. Equivalently, $\hat{\pi}^{\star}=\hat{\pi}^{\star}_1\hat{\pi}^{\star}_2\ldots \hat{\pi}^{\star}_n$, where $\hat{\pi}^{\star}_j$ equals the entry to the right of $j$ in the one-line notation of $\pi$, and it equals $\star$ if there is no such entry. Thus, $\hat\pi^{\star}$ satisfies $\hat{\pi}^{\star}_{\pi_i} = \pi_{i +1}$ for $1 \leq i \le n-1$, and $\hat{\pi}^{\star}_{\pi_n}={\star}$.  

Intuitively, when constructing a word $w\in\Wk$ such that $\Pat(w,\Sigma_\sigma,n)=\pi$ for given $\pi$, the ability to use the same letter in two positions $i$ and $j$ of $w$ (that is, $w_i=w_j$) when $\pi_j = \pi_i + 1$ is determined by the relative order of the entries $\pi_{i+1}$ and $\pi_{j+1}$, which in turn is encoded by whether $\hat\pi$ has an ascent or descent at position $\pi_i$.  

For $1\le j\le n-1$, we say that $j$ is a {\em descent} of $\hat\pi$ if either $\hat{\pi}^{\star}_{j} > \hat{\pi}^{\star}_{j + 1}$, or $\hat{\pi}^{\star}_{j + 1}  = {\star}$ and $\hat{\pi}^{\star}_{j}  > \hat{\pi}^{\star}_{j +2}$.   Similarly, we say that a sequence $\hat\pi_i\hat\pi_{i+1}\dots\hat\pi_j$ is {\em decreasing} if the sequence obtained after deleting the $\star$, if applicable, is decreasing.  Ascents and increasing sequences are defined in the same fashion.  A variant of the following definition was first given in~\cite{KAchar}.  Throughout the paper, we assume that $\sigma = \sigma_0 \sigma_1 \dots \sigma_{k-1} \in \{ + , - \}^k$. Notice that a $\sigma$-segmentation is a slight modification of a decomposition of $\hat{\pi}^{\star}$ into ascending and descending blocks with monotonicity described by $\sigma$.  

\begin{definition}\label{def:segmentation} A \emph{$\sigma$-segmentation} of $\hat{\pi}^{\star}$  is a sequence of indices $E=(e_0,  e_1, \ldots, e_{k-1}, e_k)$ with $0=e_0\le e_1 \le \dots\le e_k=n$ satisfying the following conditions:
\begin{enumerate}[(a)]
\item for all $0\le t\le k-1$, the sequence $\hat{\pi}^{\star}_{e_t +1}\hat{\pi}^{\star}_{e_t +2} \dots \hat{\pi}^{\star}_{e_{t+1}}$ is increasing if $\sigma_{t} = +$ and decreasing if $\sigma_{t} = -$;
\item if $\sigma_0 = +$ and $\hat{\pi}^{\star}_1 \hat{\pi}^{\star}_2 = \star 1$ (equivalently, $\pi_{n-1}\pi_n = 21$), then $e_1 =0$; 
\item if $\sigma_{k-1} = +$ and $\hat{\pi}^{\star}_{n-1} \hat{\pi}^{\star}_n = n \star$ (equivalently, $\pi_{n-1}\pi_n = (n{-}1)n$), then $e_{k{-}1} = n-1$;  
\item if $\sigma_0 = \sigma_{k-1} = -$ and both $\hat{\pi}^{\star}_1 = n$ and $\hat{\pi}^{\star}_{n-1}\hat{\pi}^{\star}_{n} = 1{\star}$ (equivalently, $\pi_{n - 2} \pi_{n-1} \pi_n = (n-1) 1 n$), then either $e_1 =0$ or $e_{k{-}1} =  n-1$;
\item if $\sigma_0 = \sigma_{k-1} = -$ and both $\hat{\pi}^{\star}_1 \hat{\pi}^{\star}_2 = {\star}n$ and $\hat{\pi}^{\star}_n = 1$ (equivalently, $\pi_{n - 2} \pi_{n-1} \pi_n = 2 n 1$), then either $e_1 = 0 $ or $e_{k{-}1} = n$;
\item $e_t \neq \pi_n$ for all $1 \leq t \leq k-1$.  
\end{enumerate} 
To each $\sigma$-segmentation of $\hat{\pi}^{\star}$ we associate the finite word $\zeta = z_1 z_2 \dots z_{n-1}$  defined by $z_{i} = j$ whenever $e_j < \pi_i \leq e_{j+1}$, for $1\le i\le n-1$. We say that the $\sigma$-segmentation $E$ {\em defines}~$\zeta$, and that $\zeta$ is the {\em associated word} (or {\em prefix}) of the $\sigma$-segmentation.
\end{definition}

It will be convenient to visualize a $\sigma$-segmentation by placing vertical bars in positions $e_t$ (for $0\le t\le k$) in the one-line notation of $\hat\pi$, where position $0$ is considered to be to the left of $\hat\pi_1$, and positions $n$ is to the right of $\hat\pi_n$. 
We denote this visualization by $\hat\pi_E$. Condition~(f), which is equivalent to no bar being placed immediately after the $\star$ in $\hat\pi_E$ (except possibly the one for $e_k = n$), guarantees that each $\sigma$-segmentation of $\hat{\pi}^{\star}$ defines a distinct word $\zeta$. The number of entries other than $\star$ between adjacent bars in $\hat\pi_E$ determines the number of times each value appears in~$\zeta$.

\begin{example}\label{ex:seg} \begin{enumerate}[(a)] 
\item Let $\sigma = {+}{+}$ and $\pi = 52413$.  Then $\hat{\pi}^{\star} = 34{\star}12$ has a $\sigma$-segmentation $E= (0, 2, 5)$, which we visualize as $\hat\pi_E=|34|{\star}12|$. It defines the prefix $\zeta=1010$. Since $\pi_n = 3$, condition~(f) in Definition~\ref{def:segmentation} prevents us from choosing $(0, 3, 5)$, which would also have defined the same word $\zeta$.  
\item Let $\sigma = {+}{+}{-}$ and $\pi = 34521$.  Then $\hat{\pi}^{\star} = {\star}1452$ has a $\sigma$-segmentation $E= (0, 0, 3, 5)$, visualized as $\hat\pi_E=\|{\star}14|52|$, which defines $\zeta = 1221$.  Notice that $\hat{\pi}^{\star}$ does not have a $({+}{-})$-segmentation since, by condition~(b) in Definition~\ref{def:segmentation},  $\sigma_0= +$ and $\hat{\pi}^{\star}_1 \hat{\pi}^{\star}_2 = {\star}1$ force $e_1 = 0$.  
\item Let $\sigma = {-}{+}{+}$ and $\pi = 32145$.  Then $\hat{\pi}^{\star} = 4125{\star}$ has a $\sigma$-segmentation $E = (0, 2, 4, 5)$, visualized as $\hat{\pi}^{\star}_E = |41|25|{\star}|$, which defines $\zeta = 1001$.  Notice that $\hat{\pi}^{\star}$ does not have a $({-}{+})$-segmentation, since by condition~(c) in Definition~\ref{def:segmentation}, $\sigma_{1} = +$  and $\hat{\pi}^{\star}_{4} \hat{\pi}^{\star}_5 = 5 {\star}$ force $e_{2} = 4$.  
\item Let $\sigma = {-}{-}$ and $\pi = 2314$. 
Since $\sigma_0 = \sigma_{1} = -$ and $\pi_2 \pi_3 \pi_4 = 314$, condition~(d) in Definition~\ref{def:segmentation} forces either $e_1 = 0$ or $e_1 = 3$ in a $\sigma$-segmentation $(e_0, e_1, e_2)$ of $\hat{\pi}^{\star} = 431{\star}$. Taking $e_1 =0$, we get $E^+ = (0,  0, 4)$ and $\hat{\pi}^{\star}_{E^+} = ||431{\star}|$ , defining $\zeta^+ = 111$.  Taking $e_1 = 3$, we get $E^- = (0, 3, 4)$, $\hat{\pi}^{\star}_{E^-} = |431{\star}||$,  and $\zeta^- = 000$.
\item Let $\sigma = {-}{-}{-}$ and $\pi = 345261$. 
Since $\sigma_0 = \sigma_2 = -$ and $\pi_{4}\pi_{5}\pi_6 = 261$, condition~(e) in Definition~\ref{def:segmentation} forces either $e_1 = 0$ or $e_2 = 6$ in a $\sigma$-segmentation $(e_0, e_1, e_2, e_3)$ of $\hat{\pi}^{\star} = \star64521$. The two $\sigma$-segmentations are given by $E^{(1)} = (0, 0, 3, 6)$, which defines $\zeta^{(1)} = 12212$, and by $E^{(2)} = (0, 3, 6, 6)$, which defines $\zeta^{(2)}= 01101$.  Notice that, even though $\zeta^{(2)}$ is a binary word, $\hat{\pi}^{\star}$ does not have a $({-}{-})$-segmentation because of condition~(e).  
\end{enumerate}
\end{example}

Examples~\ref{ex:seg}(b)(c) illustrate a particular symmetry present in the patterns of signed shifts.   Let $E = (e_0, e_1, \ldots, e_k)$ be a $\sigma$-segmentation of $\hat{\pi}^{\star}$ defining the prefix $\zeta$.  Let $\rho = \pi^c$ be the complement of $\pi$.  Then $\hat\rho^\star = (\hat{\pi}^{\star})^{rc}$, where $(\hat{\pi}^{\star})^{rc}$ is the reverse-complement of $\hat{\pi}^{\star}$, i.e. $\hat{\rho}^{\star}_i = (n+1) - \hat{\pi}^{\star}_{n+1-i}$ for all $i$ except when $\hat{\pi}^{\star}_{n+1-i} = \star$, in which case $\hat{\rho}^{\star}_{i} = \star$.   Let $\sigma^r =\sigma_{k-1} \sigma_{k-2} \ldots \sigma_{0}$ be the reverse of $\sigma$.  Then a $\sigma^r$-segmentation of $\hat{\rho}^\star$ is given by $E' = (e'_0, e'_1, \ldots, e'_k)$, where $e'_i = n- e_{k-i}$ for all $i$, except if this would result in $e'_i = \rho_n$, in which case we let $e'_i = \rho_n - 1$ instead.  Moreover, the prefix $\zeta' = z'_{[1, n-1]}$ defined by $E'$ satisfies that $z'_i = k - z_i$ for all $1 \leq i \leq n-1$.  In many proofs in this paper we rely on this symmetry, by reducing two symmetric cases to one case. Note that in Definition~\ref{def:segmentation}, conditions (b) and (c) are related through this symmetry, and so are conditions (d) and~(e).  

\begin{example}\label{ex:sym}
Let $\pi  = 246135$, and consider the $(+-)$-segmentation of $\hat{\pi}^{\star}$ given by $E = (0, 4, 6)$ and $\hat{\pi}^{\star}_E = |3456|{\star}1|$, which defines $\zeta = 00100$.  Letting $\rho = \pi^c = 531642$, the above symmetry produces a $(-+)$-segmentation of $\hat\rho$ given by $E' = (0, 1, 6)$ and $\hat{\rho}^{\star}_{E'}= |6|{\star}1234|$, which defines $\zeta'= 11011$.
\end{example} 

Given a permutation $\pi\in\S_n$, a $\sigma$-segmentation of $\hat{\pi}^{\star}$, and its associated word $\zeta = z_{[1, n-1]}$, we define the following indices and subwords of $\zeta$. This notation will be used throughout the paper.
\begin{definition}\label{def:pq} If $\pi_n\neq n$, let $x$ be the index such that $\pi_x = \pi_n + 1$, and let $p = z_{[x,n-1]}$.  Similarly, if $\pi_n\neq 1$, let $y$ be such that $\pi_y = \pi_n - 1$, and let $q = z_{[y,n-1]}$.  
\end{definition}

For a finite word $d$ on the alphabet $\{0, 1, \ldots, k{-}1\}$, define $\|d\| = |\{i:  \sigma_{d_i} = -\}|$. 
For the $k$-shift, $\sigma=+^k$ and  $\|d\|$ is always zero; for the $-k$-shift, $\sigma=-^k$ and we have $\|d\| = |d|$, where $|d|$ denotes the length of $d$.  
By Definition~\ref{def:less},  
$$w \less v \text{ and }w_{[1, j]} = v_{[1, j]} \ \Longrightarrow \begin{cases}
w_{[j+1, \infty)} \less v_{[j+1, \infty)} & \text{if $\| w_{[1, j]} \|$ is even,}\\
w_{[j+1, \infty)} \gess v_{[j+1, \infty)} &  \text{if $\|w_{[1, j]}\|$ is odd.}\
\end{cases}$$

We will show that any word $w$ inducing $\pi$ has a certain form that may be described using $\sigma$-segmentations.  We show in Lemma~\ref{beginszeta} that, if $w$ induces $\pi$, there is a $\sigma$-segmentation of $\hat{\pi}^{\star}$ whose associated word is $\zeta = w_{[1, n-1]}$.  For this reason, we will refer to $\zeta$ as a \textit{prefix}.

\begin{definition}\label{def:invalid} A $\sigma$-segmentation of $\hat{\pi}^{\star}$ is  {\em invalid} if $\pi_n \notin \{1, n\}$ and the associated prefix $\zeta$ satisfies $p = q^2$ or $q = p^2$.  Otherwise the segmentation is {\em valid}. \end{definition}
 
Now we state the main theorem of this section, which gives the precise condition for when the existence of $\sigma$-segmentations implies that $\pi \in \Al(\Sigma_{\sigma})$. 

\begin{thm} \label{characterization}  Given a permutation $\pi$, we have $\pi \in \Al(\Sigma_{\sigma})$ if and only if there exists a valid $\sigma$-segmentation of $\hat{\pi}^{\star}$.  
\end{thm}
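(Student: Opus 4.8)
The plan is to prove both directions by analyzing how a word $w \in \Wk$ with $\Pat(w, \Sigma_\sigma, n) = \pi$ relates to $\sigma$-segmentations of $\hat\pi^\star$. The key idea is that iterating $\Sigma_\sigma$ just shifts $w$, so comparing $w_{[i,\infty)}$ with $w_{[j,\infty)}$ under $\less$ must reproduce the order relation $\pi_i$ versus $\pi_j$. The first letters $w_i$ partition $\{1,\dots,n\}$ into blocks according to which subinterval $[t/k,(t+1)/k)$ the corresponding shifted word falls into, and the orientation $\sigma_t$ of that block forces the relative order of $\pi_{i+1},\pi_{j+1},\dots$ when $w_i=w_j$. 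Recording the block boundaries in the $\hat\pi^\star$-picture gives exactly the indices $e_0 \le \dots \le e_k$, and the monotonicity requirement in Definition~\ref{def:segmentation}(a) is precisely the consistency condition that the order within a block is respected. So the bulk of the argument is: (i) show $\zeta := w_{[1,n-1]}$ is the associated word of a genuine $\sigma$-segmentation (this is Lemma~\ref{beginszeta}, which we may cite), and conversely (ii) given a $\sigma$-segmentation with prefix $\zeta$, build a word $w$ beginning with $\zeta$ and with an appropriate periodic (or eventually periodic) tail that realizes $\pi$.

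For the forward direction, assume $\pi \in \Al(\Sigma_\sigma)$, witnessed by some $w$. By Lemma~\ref{beginszeta} there is a $\sigma$-segmentation $E$ of $\hat\pi^\star$ with associated word $\zeta = w_{[1,n-1]}$. It remains to show $E$ is \emph{valid}, i.e. that if $\pi_n \notin \{1,n\}$ then we cannot have $p = q^2$ or $q = p^2$, where $p = z_{[x,n-1]}$, $q = z_{[y,n-1]}$ with $\pi_x = \pi_n+1$, $\pi_y = \pi_n - 1$. The point is that $w_n$ must be chosen to be strictly between the "same-block successor" information coming from the positions $x$ and $y$ (the orbit elements immediately above and below $f^{n-1}(w)$ in value are $f^{x-1}(w)$ and $f^{y-1}(w)$). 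Writing out what $\less$ demands: $w_{[y,\infty)} \less w_{[n,\infty)} \less w_{[x,\infty)}$, and both $w_{[x,\infty)}$ and $w_{[y,\infty)}$ begin with $\zeta$-controlled data forcing $w_{[n,\infty)}$ to start with $p$ (resp. $q$) up to the parity/orientation twist; if $p = q^2$ or $q = p^2$ the strict inequalities collapse into an impossible sandwiching, because the required tail of $w_{[n,\infty)}$ would have to be simultaneously a periodic word strictly between $p^\infty$-type and $q^\infty$-type bounds that coincide. So invalidity contradicts realizability.

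For the converse, suppose $E$ is a valid $\sigma$-segmentation of $\hat\pi^\star$ with prefix $\zeta = z_1\dots z_{n-1}$. I would construct $w$ explicitly: set $w_{[1,n-1]} = \zeta$ and then define the tail $w_{[n,\infty)}$ so that, within each of the $n$ cyclic "windows" $w_{[i,\infty)}$, the $\less$-order of the windows agrees with $\pi$. The natural choice is to make $w$ eventually periodic with a period built from $\zeta$ together with one extra letter $z_n := $ (the block index that correctly places $f^{n-1}(w)$), chosen to satisfy the strict inequalities with the $x$-window and $y$-window above; validity is exactly what guarantees such a choice of period and of $z_n$ exists, since it rules out the degenerate case $p=q^2$/$q=p^2$ where no strictly-in-between eventually-periodic tail exists. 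One then verifies by induction down the orbit, using the recursive clauses of Definition~\ref{def:less} and the monotonicity clause (a) of the segmentation, that $\Pat(w,\Sigma_\sigma,n) = \pi$; conditions (b)–(f) are the boundary bookkeeping (handling $\pi_n$, the $\star$, and the endpoints under the reverse-complement symmetry) needed so that the constructed $w$ actually lies in $\Wk$ and the order claims hold at the ends.

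The main obstacle will be the converse construction: exhibiting the periodic tail and proving it works requires a careful case analysis on $\sigma_0$ and $\sigma_{k-1}$ (the four cases behind conditions (b)–(e)), handling the parity twist coming from $\|\cdot\|$ in the displayed implication above, and checking that distinctness of the $n$ orbit values is preserved. The symmetry under $\pi \mapsto \pi^c$, $\sigma \mapsto \sigma^r$ (Example~\ref{ex:sym}) should cut the case work roughly in half, letting us treat, say, the situation $\sigma_0 = +$ and deduce $\sigma_0 = -$ by duality — but the case $\sigma_0 = \sigma_{k-1} = -$ (conditions (d), (e), the $\wmax$ subtleties from the earlier lemma) will still need separate attention, and verifying that the "valid" hypothesis is exactly sharp there — neither too weak nor too strong — is the delicate heart of the proof.
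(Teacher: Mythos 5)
Your forward direction is essentially the paper's: you invoke Lemma~\ref{beginszeta} to extract the segmentation from a realizing word, and your sandwiching argument $w_{[y,\infty)}\less w_{[n,\infty)}\less w_{[x,\infty)}$ forcing $w_{[n,\infty)}$ to be $q^\infty$ (an impossibility) when $p=q^2$ is exactly the content of Lemma~\ref{lem:cancomplete}. That half is fine.

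The converse, however, contains a genuine gap, and the specific construction you sketch would fail. You propose to make $w$ eventually periodic ``with a period built from $\zeta$ together with one extra letter $z_n$,'' i.e.\ essentially a periodic point of period $n$. But periodic points realize only the cyclic-type patterns (this is precisely the separate problem studied in the reference on periodic orbits of signed shifts), and many allowed patterns are not of this type: for instance $\pi = 12\cdots n$ is allowed for the positive $2$-shift, yet no periodic orbit can be order-isomorphic to a strictly increasing sequence. The paper's construction is different in an essential way: the tail is $p^{2m}\wmin$ (or $q^{2m}\wmax$), i.e.\ a \emph{high power of the suffix} $p=z_{[x,n-1]}$ of $\zeta$ followed by the extremal word of $\Wk$ — not a period involving all of $\zeta$ plus one letter. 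Moreover, you treat the verification that the constructed word induces $\pi$ as routine ``induction down the orbit,'' but this is where the real work lies: one needs the primitivity structure of $p$ and $q$ (Lemma~\ref{prefixlemsd2}), the fact that no shift $w_{[c,\infty)}$ falls strictly between $w_{[n,\infty)}$ and $w_{[x,\infty)}$ (Lemma~\ref{nothingbetween}), and the nontrivial reduction argument of Lemma~\ref{lem:patternsij} showing that this single ``nothing in between'' condition propagates to all pairwise comparisons, with the invalid case $p=q^2$ resurfacing as the obstruction in the terminal case of that reduction. As written, your proposal identifies the right forward argument but leaves the existence half of the theorem unproved, and the candidate word you would build does not in general realize $\pi$.
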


The above characterization of the allowed patterns of signed shifts is simpler than those given in~\cite{AmigoSigned} and~\cite{KAchar}, and it will allow us to obtain a complete description of the words $w \in \Wk$ inducing $\pi$ in Theorem~\ref{thm:intervals}, as well as enumeration results for the negative shift in Section~\ref{sec:enumeration}.
We note that the characterization given in~\cite{AmigoSigned} is incomplete. While this is corrected in \cite[Thm.~3.10]{KAchar}, the determination of whether
$\pi \in \Al(\Sigma_{\sigma})$ uses an unhandy condition, namely the requirement that no $b$ satisfies 
\begin{itemize}
\item $\pi_{n-b} < \pi_n < \pi_{n-2b}$ or $\pi_{n-2b} < \pi_n < \pi_{n-b}$, and 
\item $e_t < \pi_{n-b-i} \leq e_{t+1}$ if and only if $e_t < \pi_{n-i} \leq e_{t + 1}$ for all $1 \leq i \leq b$ and all $0 \leq t \leq k$.  
\end{itemize} 
The role of this condition will be played by our notion of an invalid $\sigma$-segmentation introduced in Definition~\ref{def:invalid}, which is significantly simpler.

The rest of this section will be devoted to proving Theorem~\ref{characterization}. Before launching into the proof, the following example, with diagrams included in Figure~\ref{fig:examples2}, illustrates how Theorem~\ref{characterization} can be used to determine  whether a permutation is an allowed pattern of a given signed shift.

\begin{example} \label{mainex}\begin{enumerate}[(a)]
 \item  Let $\sigma = ++$ and $\pi = 749862351$.  Then $\hat{\pi}^{\star} = {\star}35912468$ has a unique $\sigma$-segmentation $E = (0, 4, 9)$, and so $\hat{\pi}^{\star}_{E} = |{\star}359|12468|$, which defines the prefix $\zeta = 10111001$.  Since $\pi_n = 1$, this $\sigma$-segmentation is valid according to Definition~\ref{def:invalid}.  By Theorem~\ref{characterization}, $\pi$ is an allowed pattern of the $2$-shift. 

\item Let $\sigma = +-$ and $\pi = 356124$. Then $\hat{\pi}^{\star} = 245{\star}61$ has a $\sigma$-segmentation $E = (0, 3, 6)$, and so $\hat{\pi}^{\star}_{E} = |245|{\star}61|$.  This segmentation defines the prefix $\zeta =  01100$, which is valid because $p = 1100$ and $q = 01100$.  By Theorem~\ref{characterization}, $\pi$ is an allowed pattern of the tent map.  Another valid $\sigma$-segmentation of $\hat{\pi}^{\star}$ is $E' = (0, 5, 6)$, visualized as $\hat{\pi}^{\star}_{E'} = |245{\star}6|1|$, which defines the prefix $\zeta' = 00100$.  

\item Let $\sigma = --$ and $\pi = 615423$.  Then $\hat{\pi}^{\star} = 53{\star}241$ has a unique $\sigma$-segmentation $E= (0, 4, 6)$, and so $\hat{\pi}^{\star}_E = |53{\star}2|41|$, which defines the prefix $\zeta = 10100$.  Since $p = 00$ and $q = 0$, this $\sigma$-segmentation of $\hat{\pi}^{\star}$ is invalid because $p=q^2$.  To get a glimpse of the ideas behind the proof of Theorem~\ref{characterization}, let us see why there is no word $w = \zeta w_{[n, \infty)} \in \mathcal{W}_2$  inducing $\pi$. If $w$ were to induce $\pi$, then $w_{[y, \infty)} \less w_{[n, \infty)} \less w_{[x, \infty)}$, that is,
\begin{equation} \label{pqex} 0w_{[n, \infty)} \less w_{[n, \infty)} \less 00w_{[n, \infty)}, \end{equation}
which implies that $w_n = 0$.  By the definition of $\less$, canceling the common initial letter $0 \in T_{\sigma}^-$ in inequality~\eqref{pqex} implies $0w_{[n + 1, \infty)} \gess w_{[n + 1, \infty)} \gess 00 w_{[n + 1, \infty)},$ and so $w_{n+1} = 0$.  It follows from this argument that the only possibility is $w_{[n, \infty)} = 0^\infty$, which does not satisfy~\eqref{pqex}.  Since the only $\sigma$-segmentation  of $\hat{\pi}^{\star}$ is invalid, Theorem~\ref{characterization} implies that $\pi$ is not an allowed pattern of the $-2$-shift.  
\end{enumerate}
\end{example}

\begin{figure}[!htb]
 \centering
       \begin{tikzpicture}[scale= .33]
        \tikzstyle{scorestars}=[star, star points=5, star point ratio=2.25, draw,inner sep=1.3pt,anchor=outer point 3]
        \permutation{7, 3, 5, 9, 1, 2, 4, 6, 8}
        \draw [](0, 0) --  (4, 9);
        \draw [](4, 0) -- (9, 9);
            \draw[fill = white, white] (.4,6.5) circle (.27 cm);
                \draw (.25, 6.1) node[name=star, scorestars, fill=black]{};
        \draw[dashed](6.5, 3.5)--(3.5, 3.5);
        \draw[dashed](3.5, 3.5)--(3.5, 8.5);
        \draw[dashed](3.5, 8.5)--(8.5, 8.5);
        \draw[dashed](8.5, 8.5)--(8.5, 7.5);
        \draw[dashed](8.5, 7.5)--(7.5, 7.5);
        \draw[dashed](7.5, 7.5)--(7.5, 5.5);
        \draw[dashed](7.5, 5.5)--(5.5, 5.5);
        \draw[dashed](5.5, 5.5)--(5.5, 1.5);
        \draw[dashed](5.5, 1.5)--(1.5, 1.5);
        \draw[dashed](1.5, 1.5)--(1.5, 2.5);
        \draw[dashed](1.5, 2.5)--(2.5, 2.5);
        \draw[dashed](2.5, 2.5)--(2.5, 4.5);
        \draw[dashed](2.5, 4.5)--(4.5, 4.5);
        \draw[dashed](4.5, 4.5)--(4.5, .5);
        \draw[dashed](4.5, .5)--(.5, .5);
        \draw[dashed](.5, .5)--(.5, 6.5);
        \draw[dashed](.5, 6.5)--(6.5, 6.5);
        \draw[dashed](6.5, 6.5)--(6.5, 3.5);
        \draw[dotted](0, 0)--(9, 9);
      \end{tikzpicture}
      \qquad
            \begin{tikzpicture}[scale=.504] 
    \tikzstyle{scorestars}=[star, star points=5, star point ratio=2.25, draw,inner sep=1.3pt,anchor=outer point 3]
        \permutation{2, 4, 5, 3, 6, 1}
         \draw[fill = white, white] (3.5,2.5) circle (.4 cm);
        \draw[dashed](2.5, 4.5)--(4.5, 4.5);
        \draw[dashed](4.5, 4.5)--(4.5, 5.5);
        \draw[dashed](4.5, 5.5)--(5.5, 5.5);
        \draw[dashed](5.5, 5.5)--(5.5, .5);
        \draw[dashed](5.5, .5)--(.5, .5);
        \draw[dashed](.5, .5)--(.5, 1.5);
        \draw[dashed](.5, 1.5)--(1.5, 1.5);
        \draw[dashed](1.5, 1.5)--(1.5, 1.5);
     	\draw[dashed](1.5, 1.5)--(1.5, 3.5);
        \draw[dashed](1.5, 3.5)--(3.5, 3.5);
        \draw[dashed](3.5, 3.5) -- (3.5, 2.5);
        \draw[dashed](3.5, 2.5) -- (2.5, 2.5);
        \draw[dashed](2.5, 2.5) --(2.5, 4.5);
        \draw [](0, 0) --  (3, 6);
        \draw [](3, 6) -- (6, 0);
        \draw[dotted](0, 0)--(6, 6);
       
       \draw (3.34, 2.2) node[name=star, scorestars, fill=black]{};       
            \end{tikzpicture}
 \qquad
            \begin{tikzpicture}[scale=.504]
              \tikzstyle{scorestars}=[star, star points=5, star point ratio=2.25, draw,inner sep=1.3pt,anchor=outer point 3]
        \permutation{5, 3, 6, 2, 4, 1}
          \draw[fill = white, white] (2.5,5.5) circle (.3 cm);
        \draw[dashed](.5, 4.5)--(4.5, 4.5);
        \draw[dashed](4.5, 4.5)--(4.5, 3.5);
        \draw[dashed](4.5, 3.5)--(3.5, 3.5);
        \draw[dashed](3.5, 3.5)--(3.5, 1.5);
     	\draw[dashed](3.5, 1.5)--(1.5, 1.5);
     	\draw[dashed](1.5, 1.5)--(1.5, 1.5);
     	\draw[dashed](1.5, 1.5)--(1.5, 2.5);
     	\draw[dashed](1.5, 2.5)--(2.5, 2.5);
     	\draw[dashed](2.5, 2.5)--(2.5, 5.5);
     	 	\draw[dashed](2.5, 5.5)--(5.5, 5.5);
     	\draw[dashed](5.5, 5.5)--(5.5, .5);
     	\draw[dashed](5.5, .5)--(.5, .5);
     	\draw[dashed](.5, .5)--(.5, 4.5);
    
        \draw (0, 6) --  (4, 0);
        \draw (4, 6) -- (6, 0);
          \draw[dotted](0, 0)--(6, 6);
         
             \draw (2.34, 5.2) node[name=star, scorestars, fill=black]{};
      \end{tikzpicture}
\caption{Plots of $\hat\pi$ for $\pi = 749862351$, $\pi = 356124$ and $\pi = 615423$, from left to right, as in Example~\ref{mainex}.  The solid line segments with positive and negative slope illustrate a $\sigma$-segmentation in each case.}
    \label{fig:examples2}
 \end{figure}
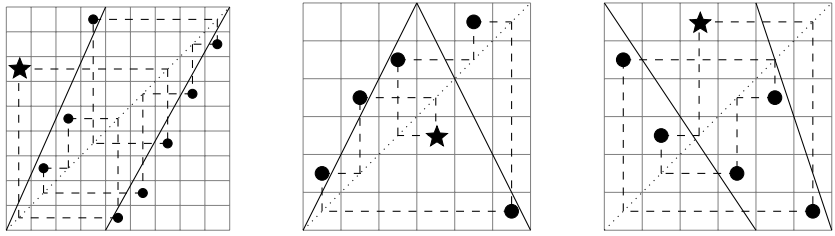

Theorem~\ref{characterization} follows from two main pieces. We first show in Lemmas~\ref{lem:cancomplete} and~\ref{beginszeta} that, if there is a word $w \in \Wk$ such that $\Pat(w, \Sigma_{\sigma}, n) = \pi$, then $\hat{\pi}^{\star}$ has a valid $\sigma$-segmentation defining the prefix $\zeta = w_{[1, n-1]}$.  Then, given a prefix $\zeta$ obtained from a valid $\sigma$-segmentation of $\hat{\pi}^{\star}$, we define words of the form $w = \zeta w_{[n, \infty)}$, and in Lemma~\ref{inducepi} we show that they induce $\pi$.  
Recall that $k$ always denotes the length of $\sigma$, that is, $\sigma \in\{+,-\}^k$. 
Lemmas~\ref{lem:cancomplete} and~\ref{beginszeta} are extended versions of \cite[Lem.~4.1,~4.2]{KAchar}, respectively.

\begin{lem} \label{lem:cancomplete} If the prefix $\zeta$ defined by a $\sigma$-segmentation of $\hat{\pi}^{\star}$ can be completed to a word $w=\zeta w_{[n,\infty)} \in \Wk$ with $\Pat(w, \Sigma_{\sigma}, n) = \pi$, then the $\sigma$-segmentation is valid. 
\end{lem}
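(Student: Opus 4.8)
The plan is to argue by contradiction: I will suppose that the $\sigma$-segmentation producing $\zeta$ is invalid and derive an impossibility of the form $qu\less qu$. So assume $\pi_n\notin\{1,n\}$; invoking the complement--reverse symmetry discussed after Example~\ref{ex:sym} (or, equivalently, running the same argument below with the roles of $x$ and $y$ interchanged), we may assume $p=q^2$. Put $u=w_{[n,\infty)}$, so that $w=\zeta u$, and recall from Definition~\ref{def:pq} that $\pi_x=\pi_n+1$ and $\pi_y=\pi_n-1$ for indices $1\le x,y\le n-1$. The key translation is: since $\Pat(w,\Sigma_\sigma,n)=\pi$ forces $w_{[i,\infty)}\less w_{[j,\infty)}$ exactly when $\pi_i<\pi_j$, and since $w_{[1,n-1]}=\zeta=z_{[1,n-1]}$, the chain $w_{[y,\infty)}\less w_{[n,\infty)}\less w_{[x,\infty)}$ becomes
\[
qu\less u\less q^2u .
\]
I would also note, as an immediate consequence of Definition~\ref{def:less}, that prepending a fixed finite word $d$ to two distinct words preserves their $\less$-order when $\|d\|$ is even and reverses it when $\|d\|$ is odd; in particular prepending $q^2$ always preserves order.

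The argument then splits on the parity of $\|q\|$. If $\|q\|$ is even, prepending $q$ to $qu\less u$ gives $q^2u\less qu\less u$, directly contradicting $u\less q^2u$. If $\|q\|$ is odd --- the substantive case --- I would prepend $q^2$ repeatedly to $u\less q^2u$ to obtain the strictly increasing chain $u\less q^2u\less q^4u\less\cdots$, so that $q^{2j}u\gess u$ for all $j\ge1$; and prepend $q$ once and then $q^2$ repeatedly to obtain the strictly decreasing chain $\cdots\less q^5u\less q^3u\less qu$, so that $q^{2j+1}u\less qu$ for all $j\ge1$.

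To finish the odd case I would use that, for every $\ell$, all but finitely many of the words $q^{2j}u$ (and of the $q^{2j+1}u$) agree with $q^\infty$ on their first $\ell$ letters --- indeed $q^{2j}u$ shares its first $2j\card{q}$ letters with $q^\infty$ --- together with the fact, read off from Definition~\ref{def:less}, that the $\less$-comparison of two distinct words is determined by a finite initial segment. Consequently, if $a_j\less b$ (resp.\ $a_j\gess b$) holds for all large $j$ along such a sequence, then $q^\infty\lesseq b$ (resp.\ $q^\infty\gesseq b$). Applied to the two chains, this yields $u\lesseq q^\infty$ and $q^\infty\lesseq qu$, hence $qu\less u\lesseq q^\infty\lesseq qu$, i.e.\ $qu\less qu$, the contradiction we wanted. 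I expect this final step to be the main obstacle: because prepending $q$ reverses the order when $\|q\|$ is odd, no finite sequence of cancellations closes the argument, and one must instead exploit that the ``even'' and ``odd'' $q$-iterates of $u$ close in on the periodic word $q^\infty$ from opposite sides --- exactly the phenomenon exhibited, with $q^\infty=0^\infty$, in Example~\ref{mainex}(c).
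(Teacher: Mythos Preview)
Your proof is correct and follows essentially the same contradiction strategy as the paper: derive $qu\less u\less q^2u$ from $\Pat(w,\Sigma_\sigma,n)=\pi$ and split on the parity of $\|q\|$. The only difference is in the odd case, where the paper cancels the common prefix $q$ repeatedly to show that $u$ must begin with arbitrarily many copies of $q$ and hence equal $q^\infty$ (which violates the strict inequality), while you reach the same contradiction by a limiting squeeze through $q^\infty$; both arguments are valid.
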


\begin{proof} Suppose for contradiction that $\zeta = w_{[1, n-1]}$ is such that $p = q^2$ (the assumption $q =p^2$ leads to a contradiction via an analogous argument). Since $w$ induces $\pi$, we have $w_{[y, \infty)} \less w_{[n, \infty)} \less w_{[x, \infty)}$, or equivalently \begin{equation}\label{eq:q} q w_{[n, \infty)} \less w_{[n, \infty)} \less qq w_{[n, \infty)}.\end{equation}  If $\|q\|$ is even, $qw_{[n, \infty)} \less qq w_{[n, \infty)}$ implies that $w_{[n, \infty)} \less q w_{[n, \infty)} = w_{[y, \infty)}$, which is impossible because $w$ induces $\pi$ and $\pi_{y} = \pi_{n} - 1$.  If $\|q\|$ is odd, Equation~\eqref{eq:q} implies that $w_{[n, \infty)}$ begins with $q$, since it is between two words that begin with $q$. Writing $w_{[n, \infty)}=q w_{[2n-y, \infty)}$ and canceling the prefix $q$, we obtain $q w_{[2n-y, \infty)} \gess w_{[2n-y, \infty)} \gess qq w_{[2n-y, \infty)}$, which implies that $w_{[2n-y, \infty)}$ must start with $q$ as well. Repeating this argument, it follows that the only possibility would be $w_{[n, \infty)} = q^\infty$, but this choice of $w_{[n, \infty)}$ does not satisfy~\eqref{eq:q}.   Hence, the segmentation that produces $\zeta$ is valid.  \end{proof}

\begin{lem} \label{beginszeta} If $w\in\Wk$ and $\Pat(w, \Sigma_{\sigma}, n) = \pi$, then there exists a unique valid $\sigma$-segmentation of $\hat{\pi}^{\star}$ whose associated prefix is $\zeta = w_{[1, n-1]}$. 
\end{lem}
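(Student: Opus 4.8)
The strategy is to read a $\sigma$-segmentation off the first letters of $w$, and then to invoke Lemma~\ref{lem:cancomplete} for validity. Since $\Pat(w,\Sigma_\sigma,n)=\pi$, the shifts $w_{[1,\infty)},\dots,w_{[n,\infty)}$ are ordered under $\less$ exactly as prescribed by $\pi$, so comparing first letters shows that the values $w_i$, listed in increasing order of $\pi_i$, form a weakly increasing sequence. Hence for each $0\le t\le k$ there is an integer $e_t=\card{\{i\in[1,n]:w_i<t\}}$, and by construction $w_i=t$ exactly when $e_t<\pi_i\le e_{t+1}$. If some $e_t$ with $1\le t\le k-1$ equals $\pi_n$ (as one checks, this can happen for at most one index), I replace it by $e_t-1$; since $\pi_n$ is not among $\pi_1,\dots,\pi_{n-1}$ this changes neither the associated word nor, as only the symbol $\star$ crosses a bar, the monotonicity of the blocks, and it produces a sequence $E=(e_0,\dots,e_k)$ with $0=e_0\le\cdots\le e_k=n$, with $e_t\neq\pi_n$ for $1\le t\le k-1$, and with associated word exactly $\zeta=w_{[1,n-1]}$.

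It remains to verify conditions~(a)--(f) of Definition~\ref{def:segmentation} for $E$, and that $E$ is the only $\sigma$-segmentation with associated word $\zeta$. Condition~(a) is the main point: fix $t$ and positions $i,i'$ with $w_i=w_{i'}=t$ and $\pi_i<\pi_{i'}$, so $w_{[i,\infty)}\less w_{[i',\infty)}$; cancelling the common first letter $t$ as in Definition~\ref{def:less} gives $w_{[i+1,\infty)}\less w_{[i'+1,\infty)}$ if $t\in T_{\sigma}^+$ and $w_{[i+1,\infty)}\gess w_{[i'+1,\infty)}$ if $t\in T_{\sigma}^-$, and passing through the identity $\hat{\pi}^{\star}_{\pi_j}=\pi_{j+1}$ shows that $\hat{\pi}^{\star}_{e_t+1}\dots\hat{\pi}^{\star}_{e_{t+1}}$ is increasing, respectively decreasing. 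The one index requiring care is $i=n$, where $\hat{\pi}^{\star}_{\pi_n}=\star$; here the convention that the $\star$ is deleted before testing monotonicity is exactly what is needed. Conditions~(b)--(f) come from a short case analysis on the block containing $\pi_n$ (equivalently, on the location of $\star$ in $\hat{\pi}_E$): in each of the listed special configurations of $\hat{\pi}^{\star}$, an argument in the spirit of Lemma~\ref{lem:cancomplete} — were the relevant bar placed otherwise, $w$ would be forced to be eventually constant and could not induce $\pi$ — shows that the letters of $w$ force $e_1$ or $e_{k-1}$ to the stated value. The reverse-complement symmetry recorded after Definition~\ref{def:segmentation} lets one treat (b) and (c) together, and likewise (d) and (e).

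For uniqueness, any $\sigma$-segmentation $E'$ of $\hat{\pi}^{\star}$ with associated word $\zeta$ has $z'_i=w_i$ for $1\le i\le n-1$, so it assigns every value in $\{1,\dots,n\}\setminus\{\pi_n\}$ to the same block as $E$; this fixes all boundaries except for how bars may be placed in the single gap around $\pi_n$, and condition~(f) together with~(a) leaves no freedom there, so $E'=E$. Validity of this unique $\sigma$-segmentation is then immediate from Lemma~\ref{lem:cancomplete}, since by hypothesis its prefix $\zeta=w_{[1,n-1]}$ is completed to a word inducing $\pi$, namely $w$ itself.

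The genuine obstacle I anticipate is condition~(a) together with the bookkeeping around position $n$: one has to control the interaction of $\star$ with block boundaries and monotonicity, and check that the list~(b)--(f) is exactly what makes the segmentation read off from $w$ both well defined and unique. Everything else is a routine unwinding of the definition of $\less$.
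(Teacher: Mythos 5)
Your proposal is correct and follows essentially the same route as the paper: the same counting definition $e_t=\card{\{r: w_r<t\}}$, the same adjustment to $\pi_n-1$ when $e_t=\pi_n$, the same first-letter-cancellation argument for condition~(a), and the same appeals to condition~(f) for uniqueness and to Lemma~\ref{lem:cancomplete} for validity. The only caveat is in your sketch of conditions~(d)--(e), where the forbidden completion is the eventually periodic word $((k{-}1)0)^\infty$ rather than an eventually constant one, but the contradiction is derived in exactly the same way.
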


\begin{proof}  Let $w \in \Wk$ be such that $\Pat(w, \Sigma_{\sigma}, n ) = \pi$. For $0 \leq t \leq k$, let \begin{equation}\label{def:et} e_t = |\{ 1 \leq r \leq n : w_{r} < t\}|,\end{equation}
and let $e'_t = e_t$ unless $e_t=\pi_n$, in which case we let $e'_t = \pi_n -1$.
We will show that $(e'_0, e'_1, \dots, e'_k)$ is a valid $\sigma$-segmentation of $\hat{\pi}^{\star}$.  We note that the finite word $z_{[1, n-1]}$ defined by taking $z_i = j$ whenever $e_j < \pi_i \leq e_{j+1}$ for the set of indices $(e_0, e_1, \dots, e_k)$, as in Definition~\ref{def:segmentation}, is the same as that defined by $(e'_0, e'_1, \dots, e'_k)$, because the choice of $e'_t = \pi_n - 1$ only moves the index from the right to the left of $\hat{\pi}^{\star}_{\pi_n} = \star$.   

Since the entry $\hat{\pi}^{\star}_{\pi_n} = \star$ does not influence decreasing and increasing sequences in $\hat{\pi}^{\star}$, it suffices to show that condition~(a) in Definition~\ref{def:segmentation} holds for $(e_0, e_1, \dots, e_{k})$.  By Equation~\eqref{def:et}, the prefix $w_{[1, n]}$ has $e_t$ letters smaller than $t$.  Therefore, among the words $w_{[r, \infty)}$ with $1 \leq r \leq n$, there are exactly $e_t$ of them such that $w_r < t$, and exactly $e_{t+1}$ such that $w_r \le t$. Since $w$ induces $\pi$, it follows that if $e_t<\pi_i\le e_{t+1}$, then $w_{[i,\infty)}$ must be one of the shifts with $w_i \le t$ but not $w_i < t$, and so $w_i=t$. 

Consider first the case $t \in T_{\sigma}^-$.  To show that the sequence $\hat{\pi}^{\star}_{e_t +1}\hat{\pi}^{\star}_{e_t +2}\dots \hat{\pi}^{\star}_{e_{t+1}}$ is decreasing, suppose that $e_t < \pi_i < \pi_j \leq e_{t+1}$.  We will show that $\hat\pi_{\pi_i}>\hat\pi_{\pi_j}$ assuming that $i,j<n$, since the entry $\hat\pi_{\pi_n}=\star$ does not disrupt the property of $\hat{\pi}^{\star}_{e_t +1}\hat{\pi}^{\star}_{e_t +2}\dots \hat{\pi}^{\star}_{e_{k+1}}$ being decreasing. By the above paragraph, $w_i = w_j = t \in T_{\sigma}^-$, and $w_{[i, \infty)} \less w_{[j, \infty)}$ because $w$ induces $\pi$. Therefore, $w_{[i + 1, \infty)} \gess w_{[j + 1, \infty)}$, and so $\pi_{i + 1} > \pi_{j + 1}$, or equivalently $\hat{\pi}^{\star}_{\pi_i} = \pi_{i+1} > \pi_{j+1} = \hat{\pi}^{\star}_{\pi_j}$.

Likewise, to show that the sequence $\hat{\pi}^{\star}_{e_t+ 1} \hat{\pi}^{\star}_{e_t + 2} \dots \hat{\pi}^{\star}_{e_{t+1}}$ is increasing for all $t \in T_{\sigma}^+$, suppose that $e_t < \pi_i < \pi_j \leq e_{t +1}$.  Assuming that $i, j < n$, we will verify that $\hat{\pi}^{\star}_{\pi_i} > \hat{\pi}^{\star}_{\pi_j}$. As before, $w_i = w_j = t \in T_{\sigma}^+$ and $w_{[i, \infty)} \less w_{[j, \infty)}$.  Therefore, $w_{[i + 1, \infty)} \less w_{[j + 1, \infty)}$, and so $\hat{\pi}^{\star}_{\pi_i} = \pi_{i + 1} < \pi_{j +1} = \hat{\pi}^{\star}_{\pi_j}$.  

To show that condition (b) in Definition~\ref{def:segmentation} holds for $(e'_0, e'_1, \dots, e'_k)$, we will show that if $\sigma_0 = +$ and $\hat{\pi}^{\star}_{n - 1} \hat{\pi}^{\star}_n =  \star 1$ (equivalently, $\pi_{n-1} \pi_n =  2 1$) then $e_1 \leq 1$. Once this is proved, it will follow that $e'_1=0$, because even in the case that $e_1 = 1= \pi_n$, we would have $e'_1 = \pi_n -1 = 0$. Suppose for a contradiction that $e_1 > 1$.   Then, by Equation~\eqref{def:et}, the word $w_{[1, n]}$ has at least two $0$s.  Since $w$ induces $\pi$, we have that $w_{[n, \infty)}$ is the smallest word and $w_{[n - 1, \infty)}$ is the second smallest word among the shifts $w_{[r, \infty)}$ with $1 \leq r \leq n$.  It follows that $w_{n-1} = w_n=0$.  But then, canceling the initial $0$ of the inequality  $w_{[n, \infty)} \less 0 w_{[n, \infty)} = w_{[n - 1, \infty)}$ gives $w_{[n+1, \infty)} \less w_{[n, \infty)} = 0 w_{[n+1, \infty)}$, and so $w_{n+1}=0$. Repeating this process yields $w_{[n, \infty)} = 0^\infty = w_{[n - 1, \infty)}$, which is a contradiction because then $\Pat(w, \Sigma_{\sigma}, n)$ would not be defined.  Verifying condition~(c) for $(e'_0, e'_1,  \ldots, e'_n)$ follows a parallel argument.  

To show that condition~(d) holds, we verify that if $\sigma_0 = \sigma_{k-1}=-$,  $\hat{\pi}^{\star}_1 = n$ and $\hat{\pi}^{\star}_{n-1} \hat{\pi}^{\star}_{n} = 1 \star$ (equivalently, $\pi_{n-2} \pi_{n-1} \pi_{n} = (n{-}1)1n$), then either $e'_1 = 0$ or $e'_{k-1} = n-1$. 
It will be enough to show that $e_1=0$ and $e_{k-1} \geq n-1$. Indeed, the first equality clearly implies $e'_1=0$, and even in the case that $e_{k-1} = n = \pi_n$, we would still have $e'_{k-1} = \pi_n -1 = n-1$.  Suppose for a contradiction that $e_1 > 0$ and $e_{k-1} < n-1$. By Equation~\eqref{def:et}, the word $w_{[1, n]}$ has at least one $0$ and at least two $k-1$. 
Since $w$ induces $\pi$, we have that $w_{[n-1, \infty)}$ is the smallest and $w_{[n-2, \infty)}$ is the second largest among the shifts $w_{[r, \infty)}$ with $1 \leq r \leq n$. It follows that $w_{n-1} = 0$ and $w_{n-2} = k{-}1$.  
We claim that there is no choice for $w_{[n, \infty)}$ such that $\Pat(w, \Sigma_{-k}, n) = \pi$.  Indeed, since $\pi_n > \pi_{n-2}$, the  suffix $w_{[n, \infty)}$ must satisfy that
$$w_{[n, \infty)} \gess (k{-}1)0 w_{[n, \infty)} =w_{[n-2, \infty)}.$$
Therefore, $w_n= k{-}1$ and $w_{[n+1, \infty)} \less 0 w_{[n, \infty)}$.  From this, we conclude that $w_{n+1} = 0$ and $w_{[n+2, \infty)} \gess (k{-}1)0 w_{[n+2, \infty)} = w_{[n, \infty)}$.  Continuing inductively, the only possibility is that $w_{[n, \infty)} = ((k{-}1)0)^\infty$, which does not satisfy $w_{[n, \infty)} \gess w_{[n-2, \infty)}$.  Hence, condition~(d) in Definition~\ref{def:segmentation} holds for $(e'_0, e'_1, \dots, e'_k)$.  condition~(e) holds by a similar argument, and condition~(f) is immediate by construction.  

Finally, since the prefix $\zeta$ can be completed to a word $w$ inducing $\pi$, it follows from Lemma~\ref{lem:cancomplete} that the $\sigma$-segmentation $(e'_0, e'_1,  \ldots, e'_n)$ is valid.  Moreover, it is unique because each $\sigma$-segmentation of $\hat{\pi}^{\star}$ defines a distinct word $\zeta$.
\end{proof}

In the next three lemmas, let $\zeta$ be the prefix defined by some $\sigma$-segmentation of $\hat\pi^\star$.
Lemma~\ref{flipping} describes the relationship between $\zeta$ and $\pi$, while Lemmas~\ref{prefixlemsd2} and~\ref{lem:doubleback} give additional properties of~$\zeta$.

\begin{lem} \label{flipping} Let $i, j < n$. If $\pi_i < \pi_j$, then either \begin{enumerate}[(a)] \item $z_i < z_j$, or \item $z_i = z_j$ and $\pi_{j + 1} - \pi_{i + 1}$ has the same sign as $\sigma_{z_i}$. \end{enumerate}
\end{lem}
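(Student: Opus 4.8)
The plan is to unwind the definition of the associated word $\zeta$ directly from the $\sigma$-segmentation, and translate the hypothesis $\pi_i < \pi_j$ into a statement about the blocks $\hat{\pi}^{\star}_{e_t+1}\dots\hat{\pi}^{\star}_{e_{t+1}}$. Recall that $z_i = j$ precisely when $e_j < \pi_i \le e_{j+1}$; so $z_i \le z_j$ is automatic from $\pi_i < \pi_j$ together with the monotonicity $e_0 \le e_1 \le \dots \le e_k$. Thus case (a) holds unless $z_i = z_j$, and it remains to treat the case where $\pi_i$ and $\pi_j$ lie in the same block, i.e.\ $e_t < \pi_i < \pi_j \le e_{t+1}$ for $t = z_i = z_j$.

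In that case I would invoke condition~(a) of Definition~\ref{def:segmentation}: the sequence $\hat{\pi}^{\star}_{e_t+1}\hat{\pi}^{\star}_{e_t+2}\dots\hat{\pi}^{\star}_{e_{t+1}}$ is increasing if $\sigma_t = +$ and decreasing if $\sigma_t = -$, where the $\star$ entry (if present) is ignored. Since $i,j < n$, neither $\hat{\pi}^{\star}_{\pi_i}$ nor $\hat{\pi}^{\star}_{\pi_j}$ is the $\star$ entry (the $\star$ sits at position $\pi_n$). By the relation $\hat{\pi}^{\star}_{\pi_i} = \pi_{i+1}$, monotonicity of the block at positions $\pi_i < \pi_j$ gives $\pi_{i+1} < \pi_{j+1}$ when $\sigma_t = +$ and $\pi_{i+1} > \pi_{j+1}$ when $\sigma_t = -$; that is, $\pi_{j+1} - \pi_{i+1}$ has the same sign as $\sigma_{z_i}$, which is conclusion (b).

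The one genuine subtlety — and the step I expect to need the most care — is the possibility that the $\star$ lies strictly between positions $\pi_i$ and $\pi_j$ inside the block, so that the ``increasing/decreasing'' property as literally stated only refers to the subsequence with the $\star$ deleted. This is exactly why the paper phrases monotonicity of $\hat\pi$ in terms of the sequence obtained after deleting the $\star$: deleting $\star$ from an increasing (resp.\ decreasing) sequence leaves the remaining entries in increasing (resp.\ decreasing) order, so the comparison between $\hat{\pi}^{\star}_{\pi_i}$ and $\hat{\pi}^{\star}_{\pi_j}$ is unaffected. I would also note that $\pi_i$ and $\pi_j$ cannot themselves equal $\pi_n$ under the hypothesis $i,j<n$, so both $\hat\pi^\star_{\pi_i}$ and $\hat\pi^\star_{\pi_j}$ are honest integer entries and the sign comparison makes sense. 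With these observations the proof is a direct translation and requires no further machinery.
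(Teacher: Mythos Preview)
Your proof is correct and follows essentially the same approach as the paper: both arguments first observe that $\pi_i<\pi_j$ forces $z_i\le z_j$ directly from the definition of the associated word, and then, when $z_i=z_j=t$, use condition~(a) of Definition~\ref{def:segmentation} on the block $\hat{\pi}^{\star}_{e_t+1}\dots\hat{\pi}^{\star}_{e_{t+1}}$ together with $\hat{\pi}^{\star}_{\pi_i}=\pi_{i+1}$ and $\hat{\pi}^{\star}_{\pi_j}=\pi_{j+1}$ to read off the sign of $\pi_{j+1}-\pi_{i+1}$. Your handling of the $\star$ entry is slightly more explicit than the paper's, but the underlying argument is identical.
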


\begin{proof}  Suppose that $\pi_i < \pi_j$.  The construction of $\zeta$ in Definition~\ref{def:segmentation} yields $z_i \leq z_j$.  We will first prove that if $z_i = z_j$ and $\pi_{i + 1} < \pi_{j + 1}$, then $\sigma_{z_i} = +$.  By the definition of $\hat{\pi}^{\star}$, we have $\hat{\pi}^{\star}_{i} = \pi_{i + 1}$ and $\hat{\pi}^{\star}_j = \pi_{j + 1}$, and so $\hat{\pi}^{\star}_i < \hat{\pi}^{\star}_j$.  Moreover, using Definition~\ref{def:segmentation}, $z_i = z_j$ implies that there is no index $e_k$ in the $\sigma$-segmentation such that $\hat{\pi}^{\star}_i \leq e_k < \hat{\pi}^{\star}_j$.  Therefore, $z_i \in T_{\sigma}^+$ and so $\sigma_{z_i} = +$.  
A similar argument shows that if $z_i = z_j$ and $\pi_{i + 1} > \pi_{j + 1}$, then  $\sigma_{z_i} = -$.  
\end{proof}

\begin{lem}[{\cite[Lem.~4.5]{KAchar}}] \label{prefixlemsd2} Let $p$ and $q$ be as in Definition~\ref{def:pq}, when applicable. Then either $p$ is primitive, or $p = d^2$ for some primitive word $d$ with $\|d\|$ odd.  Likewise,  either $q$ is primitive or $q = d^2$ for some primitive word $d$ with $\|d\|$ odd.  \end{lem}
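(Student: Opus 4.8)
The plan is to prove the statement for $q$; the statement for $p$ will follow by the reverse-complement symmetry described after Example~\ref{ex:seg}, which sends $\pi$ to $\rho=\pi^c$, the signature $\sigma$ to $\sigma^r$, the prefix $\zeta$ to $\zeta'$ with $z'_i=k-z_i$, and interchanges the roles of $x$ and $y$ (since complementing swaps the entry one below $\pi_n$ with the entry one above), hence interchanges $p$ and (the appropriate reverse-complement image of) $q$; note that $\|d\|$ is preserved under $d\mapsto(k-d_1)(k-d_2)\dots$, since $\sigma^r_{k-d_i}=\sigma_{d_i-1}$ up to reindexing — I would check this bookkeeping carefully, as it is the one genuinely fiddly point. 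So assume $\pi_n\neq 1$, let $y$ be the index with $\pi_y=\pi_n-1$, and let $q=z_{[y,n-1]}$.

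Suppose $q$ is not primitive, so $q=d^m$ for a primitive word $d$ and some $m\ge 2$; write $\ell=|d|$, so $|q|=m\ell$ and $y=n-m\ell$. The goal is to show $m=2$ and $\|d\|$ is odd. The key tool is Lemma~\ref{flipping} applied to pairs of indices that are ``$\ell$ apart'' inside the block $[y,n-1]$. Concretely, for $0\le r\le (m-1)\ell-1$ set $i=y+r$ and $j=y+r+\ell$; then $z_i=z_j$ (they occupy the same position mod $\ell$ within $q=d^m$, and $d$ repeats), so Lemma~\ref{flipping} says that the sign of $\pi_{j+1}-\pi_{j}$... more precisely, whenever $\pi_i<\pi_j$ we get that $\pi_{i+1}<\pi_{j+1}$ iff $\sigma_{z_i}=+$, and symmetrically when $\pi_i>\pi_j$. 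Iterating this comparison along the arithmetic progression $y+r,\ y+r+\ell,\ y+r+2\ell,\dots$ and tracking how the parity of $\|d_{[1,\cdot]}\|$ flips the direction of the inequality, one shows: the map $i\mapsto i+\ell$ acts on the values $\{\pi_i : i\in[y,n-1]\}$ in a way that is monotone increasing if $\|d\|$ is even and monotone decreasing if $\|d\|$ is odd. Because the last index of the block is $n-1$ and $\pi_{y}=\pi_n-1$ is forced to sit just below the ``exit'' value $\pi_n$, pushing the progression forward $m-1$ times and comparing with the known relation $\pi_y=\pi_n-1<\pi_n$ (together with $\pi_{n-1}$'s position) forces a contradiction unless $m=2$, and in the $m=2$ case forces $\|d\|$ to be odd: if $\|d\|$ were even, the monotone-increasing action would give $\pi_{y}<\pi_{y+\ell}<\dots$, eventually yielding $\pi_{n-1}$ too large relative to $\pi_n$, contradicting that $\pi_n-1=\pi_y$ while $\pi_{n-1}$ lies in the same $z$-block; and if $m\ge 3$ a third application of the same step overshoots.

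The main obstacle I anticipate is the careful parity/sign bookkeeping in the iteration: each step of $i\mapsto i+\ell$ multiplies the relevant inequality direction by $(-1)^{\|d\|}$, but to chain the steps one must also control what happens at the position where the progression reaches the end of the word (index $n-1$, where $\hat\pi^\star$ has the $\star$ to its right, and where $\pi_n$ enters the picture), and to correctly use the hypothesis that $q$ comes from an actual $\sigma$-segmentation rather than an arbitrary factorization. I would isolate a small sublemma stating precisely how Lemma~\ref{flipping} propagates along a maximal arithmetic progression of indices with constant $z$-value, reducing the case analysis to: (i) $\|d\|$ even forces a strictly monotone chain of $\pi$-values of length $m$ inside a single block, which combined with $\pi_y=\pi_n-1$ caps $m$ and in fact rules out $\|d\|$ even entirely since the chain would have to reach a value on both sides of $\pi_n$; (ii) $\|d\|$ odd makes the chain alternate, so length $>3$ would require $\pi$ to revisit an already-excluded comparison, leaving only $m=2$. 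This mirrors the structure of \cite[Lem.~4.5]{KAchar}, so I would follow that argument's skeleton while substituting our cleaner notation.
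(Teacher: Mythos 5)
Your plan follows essentially the same route as the paper's proof: write the word as a power $d^m$ of a primitive word, iterate Lemma~\ref{flipping} along the arithmetic progression with step $|d|$, use the parity of $\|d\|$ to decide whether comparisons are preserved or reversed, and play the resulting chain against $\pi_y=\pi_n-1$ (resp.\ $\pi_x=\pi_n+1$) to force $m=1$ or $m=2$. The only spots you would need to flesh out are exactly the ones the paper writes down explicitly: for $\|d\|$ odd the chain does not simply ``alternate'' but interleaves into one of two total orders depending on the initial comparison of $\pi_y,\pi_{y+\ell},\pi_{y+2\ell}$ (which kills odd $m\ge 3$), and even $m$ is handled by re-running the even-parity argument on $d'=d^2$ to conclude $m=2$.
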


\begin{proof}  We will prove the first statement; the proof of the second statement follows by symmetry. We can write $p = d^r$, where $d$ primitive, and let $i = |d|$. Then $n = x+ri$ and $$d=z_{[x, x+i-1]} = z_{[x + i, x + 2i - 1]} = \ldots = z_{[x + (r-1)i, n-1]}.$$  

First suppose that $\|d\|$ is even. If $\pi_x < \pi_{x + i}$, then applying Lemma~\ref{flipping} $i$ times we obtain $\pi_{x+i} < \pi_{x + 2i}$.  Repeatedly applying this argument yields
$$\pi_x < \pi_{x+i} < \pi_{x + 2i} < \dots < \pi_{x + ri} = \pi_n,$$
which contradicts the fact that $\pi_x = \pi_n + 1$.  On the other hand, if $\pi_x > \pi_{x + i}$, then we obtain
$$\pi_x > \pi_{x+i} > \pi_{x + 2i} > \dots > \pi_{x + ri} = \pi_n.$$  Since $\pi_x = \pi_n + 1$, we must have $r = 1$, and so $p$ must be primitive in this case. 

Now suppose that $\|d\|$ is odd.  If $r$ is even, then we can write $p=(d')^{r/2}$ with $d' = d^2$ and apply the previous argument (which does not require $d'$ to be primitive) to conclude that $r/2=1$ and $p=d^2$, which is the second option in the statement. We are left we the case that $r$ is odd. 

If $\pi_x < \pi_{x + i}$, then Lemma~\ref{flipping} applied $i$ times implies that $\pi_{x + i} > \pi_{x + 2i}$.  Consider two cases depending on the relative order of $\pi_x$ and $\pi_{x+2i}$.
In the case $\pi_x < \pi_{x+2i} < \pi_{x + i}$, applying Lemma~\ref{flipping} $i$ times gives $\pi_{x+i} > \pi_{x + 3i} > \pi_{x + 2i}$.  Applying the same lemma $i$ more times we obtain $\pi_{x + 2i} < \pi_{x + 4i} < \pi_{x + 3i}$.  From repeated applications of this lemma, we get
$$\pi_x < \pi_{x + 2i} < \pi_{x + 4i} < \dots < \pi_{x + (r-1)i} < \pi_{x + ri} < \pi_{x + (r-2)i} < \dots < \pi_{x + 3i} < \pi_{x + i}.$$
Similarly, in the case that $\pi_{x + 2i} < \pi_x < \pi_{x + i}$, repeated applications of Lemma~\ref{flipping} give
$$\pi_{x + (r-1)i} < \dots < \pi_{x + 4i} < \pi_{x + 2i} < \pi_x < \pi_{x + i} < \pi_{x + 3i} < \dots < \pi_{x + ri}.$$
In both cases, we get $\pi_x < \pi_{x + ri} = \pi_n$, a contradiction to $\pi_x = \pi_n + 1$.  

If $\pi_x > \pi_{x + i}$, then Lemma~\ref{flipping} applied $i$ times implies that $\pi_{x + i} < \pi_{x + 2i}$.  Again, we consider two cases
depending on the relative order of $\pi_x$ and $\pi_{x+2i}$. If $\pi_{x + i} < \pi_x < \pi_{x +2i}$, then repeated applications of Lemma~\ref{flipping} give
$$\pi_{x + ri} < \dots < \pi_{x + 3i} < \pi_{x + i} < \pi_x < \pi_{x + 2i} < \pi_{ x + 4i} < \dots < \pi_{x + (r-1)i}.$$
Similarly, if $\pi_{x +i} < \pi_{x + 2i} < \pi_x$, then Lemma~\ref{flipping} gives
$$\pi_{x + i} < \pi_{x + 3i} < \dots < \pi_{x + ri} < \pi_{x + (r-1)i} < \dots < \pi_{x + 4i} < \pi_{x + 2i} < \pi_x.$$
In both cases, the fact that $\pi_x = \pi_n + 1= \pi_{x + ri}+1$ implies that $r = 1$, and so $p$ is primitive.\end{proof}

\begin{lem} \label{lem:doubleback} Let $p$ and $q$  be as in Definition~\ref{def:pq}, when applicable.  
If  $\zeta = a qq$ for some $a$ and $\|q\|$ is odd, then $p =q^2$.  Likewise, 
 if $\zeta = a' pp$ for some $a'$ and $\|p\|$ is odd, then $q = p^2$.
\end{lem}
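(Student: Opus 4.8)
The plan is to first reduce to the first statement via the reverse-complement symmetry described after Example~\ref{ex:sym}: passing from $\pi$ to $\pi^c$ interchanges $\sigma$ with $\sigma^r$ and, after complementing letters, swaps the roles of $p$ and $q$, so the two assertions are equivalent. So assume $\zeta=aqq$ with $\|q\|$ odd, and write $i=|q|=n-y$. Since $|\zeta|=n-1\ge 2i$ we get $y-i\ge 1$, and the hypothesis says precisely that $z_{[y-i,\,y-1]}=z_{[y,\,n-1]}=q$, so $z$ is $i$-periodic on $[y-i,n-1]$. The goal is to prove $x=y-i$, which at once gives $p=z_{[x,n-1]}=z_{[y-i,n-1]}=qq=q^2$.

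First I would run a chain argument in the spirit of the proof of Lemma~\ref{prefixlemsd2}: applying Lemma~\ref{flipping} successively to the equal-letter pairs of positions $(y-i+m,\,y+m)$ for $m=i,i-1,\dots,1$, starting from $\pi_y=\pi_n-1$ (so $\pi_y<\pi_n$) and flipping the comparison once for each $-$ among the letters of $q$, one obtains that $\pi_{y-i}<\pi_y$ holds if and only if $\|q\|$ is even. Since $\|q\|$ is odd this gives $\pi_{y-i}>\pi_y$, and as $\pi_{y-i}\ne\pi_n$ it forces $\pi_{y-i}\ge\pi_n+1=\pi_x$.

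Next I would suppose for contradiction that $x\ne y-i$, so $\pi_y<\pi_x<\pi_{y-i}$. Because $z_y=z_{y-i}$ and $\pi_x$ lies strictly between $\pi_y$ and $\pi_{y-i}$, the value $\pi_x$ falls in the same interval $(e_j,e_{j+1}]$ of the segmentation, so $z_x=z_y$. I would then propagate this forward: while the positions involved remain in $\{1,\dots,n-1\}$, Lemma~\ref{flipping} applied to the equal-letter pairs $(x+m,y+m)$ and $(x+m,y-i+m)$ shows that $\pi_{x+m}$ lies strictly between $\pi_{y-i+m}$ and $\pi_{y+m}$, and hence that $z_{x+m}$ equals the common letter $z_{y+m}=z_{y-i+m}$. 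If $x\le y$ this propagation reaches $m=i$ and produces a value $\pi_{x+i}$ strictly between $\pi_{y+i}=\pi_n$ and $\pi_y=\pi_n-1$, which is impossible; the subcase $x=y$ is ruled out directly since $\pi_x=\pi_n+1\ne\pi_n-1=\pi_y$. If $x>y$, set $\ell=x-y$, so $|p|=n-x=i-\ell$; now the propagation from $x$ only reaches $m=|p|$, where it shows that $\pi_n$ — and therefore also $\pi_y=\pi_n-1$ — lies strictly between $\pi_{y-i+|p|}$ and $\pi_{y+|p|}$, and that all three of these values lie in the block of the letter $q_{|p|+1}$, so in particular $z_y=q_{|p|+1}$. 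Restarting the same propagation at the position $y$ in place of $x$ then reaches $m=\ell$ and forces $\pi_{y+\ell}=\pi_x$ to lie strictly between $\pi_y=\pi_n-1$ and $\pi_n$, again impossible. Either way one reaches a contradiction, so $x=y-i$.

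I expect the main obstacle to be the last case, $x>y$: the propagation started at $x$ exhausts the available positions before a contradiction surfaces, and the key point is that it nevertheless pins down $\pi_n$ (hence $\pi_y$) and the neighbouring blocks precisely enough that a second propagation, launched from $y$ and exploiting $\pi_y=\pi_n-1$, closes the loop. A secondary but pervasive chore will be bookkeeping the index bounds (so that Lemma~\ref{flipping} is only ever invoked for positions in $\{1,\dots,n-1\}$) and checking that the compared entries are genuinely distinct so that ``strictly between'' has content.
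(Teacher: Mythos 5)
Your proof is correct, and for most of its length it coincides with the paper's: the same symmetry reduction between the two statements, the same backward chain of applications of Lemma~\ref{flipping} along the period to get $\pi_{y-i}>\pi_y$ (hence $\pi_{y-i}\ge \pi_x$), the same split according to whether the offending index lies at or before $y$ versus after it, and, in the harder case, the same two-stage propagation ($n-x$ steps to bring the middle entry to position $n$, then $x-y$ further steps). The one genuine divergence is how the contradiction is extracted when $x>y$. The paper records only the letter equalities produced by the two propagations, concatenates them to exhibit $q$ as a nontrivial cyclic shift of itself, and appeals to the primitivity of $q$ (supplied by Lemma~\ref{prefixlemsd2} because $\|q\|$ is odd). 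You instead carry the second propagation one step further at the level of the permutation values and observe that it wedges $\pi_{y+\ell}=\pi_x$ strictly between $\pi_y=\pi_n-1$ and $\pi_n$, which is impossible outright. Your ending is a little more economical, since it never needs the primitivity of $q$; the paper's version is set up for an arbitrary index $j$ with $\pi_n<\pi_j<\pi_{y-i}$ rather than just $j=x$, so it proves the slightly stronger statement that $\pi_{y-i}$ and $\pi_n$ are consecutive values, but both arguments land on the same conclusion $x=y-i$, i.e., $p=q^2$.
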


\begin{proof}  Suppose that $\zeta = a qq$ and $\|q\|$ is odd; the case $\zeta = a' pp$ is proved similarly. Let $i = |q|$ and $m = n - 2i = y - i$, so that $z_{[m, y-1]} = z_{[y, n-1]} =q$, which is primitive by Lemma~\ref{prefixlemsd2} because $\|q\|$ is odd.  By the contrapositive of Lemma~\ref{flipping} applied $i$ times, $\pi_y < \pi_n$ implies that $\pi_{m} > \pi_y$.  Since $\pi_y=\pi_n-1$, we have $\pi_y < \pi_n <  \pi_m$.  We will show that $\pi_m = \pi_n +1$, from which it will follow that $m=x$ and $p = q^2$. Suppose for contradiction that there exists some $\pi_j$ such that $\pi_n < \pi_j < \pi_m$.

Consider first the case in which $j < y$. Since $\pi_y < \pi_j < \pi_m$, Lemma~\ref{flipping} applied $i$ times implies that $z_{[y, n-1]}=z_{[j, j+i - 1]}= z_{[m, y-1]} =q$ and $\pi_n=\pi_{y+i} > \pi_{j+i} > \pi_{m+i}=\pi_y$ since $\|q\|$ is odd, contradicting that $\pi_y = \pi_n - 1$.

Consider now the case in which $j > y$. 
Since $\pi_y < \pi_j < \pi_m$ and $z_{[y, m-1]} = z_{[m, n-1]} = q$, Lemma~\ref{flipping} applied $n-j$ times implies that $z_{[y, y + n-j - 1]} = z_{[j, n-1]} = z_{[m, m + n - j - 1]}$. If $\|z_{[j, n-1]}\|$ is odd, it yields $\pi_{y + n-j} > \pi_n > \pi_{m + n-j}$, and so $\pi_{y + n - j} > \pi_y > \pi_{m + n -j}$. Applying Lemma~\ref{flipping} $j-y$ more times, we get $z_{[y + n - j, n -1]} = z_{[y, j-1]} = z_{[m + n - j, y - 1]}$.  Similarly, if $\|z_{[j, n-1]}\|$ is even, we first get $\pi_{y + n - j} < \pi_n < \pi_{m + n - j}$ and so $\pi_{y + n - j} < \pi_y < \pi_{m + n -j}$, and Lemma~\ref{flipping} implies that $z_{[y + n - j, n -1]} = z_{[y, j-1]} = z_{[m + n - j, y - 1]}$ as well.
Combining the above equalities, we have $z_{[j, n-1]}z_{[y, j-1]}= z_{[y, y + n-j - 1]} z_{[y + n - j, n-1]}=z_{[y, n-1]}=q$, which states that $q$ is equal to one of its non-trivial cyclic shifts, thus contradicting that it is primitive. \end{proof}

It follows from Lemma~\ref{lem:doubleback} that if $\zeta$ is a prefix defined by an invalid $\sigma$-segmentation of $\hat{\pi}^{\star}$, then either $p = q^2$,  $q$ is primitive and $\|q\|$ is odd; or  $q = p^2$, $p$ is primitive and $\|p\|$ is odd.  In particular, when $\sigma = +^k$, all $\sigma$-segmentations are valid since $\|d\|$ is zero for any $d$.

Next we define sequences of words $s^{(m)}$ and $t^{(m)}$, which, as we will show, induce $\pi$ when $m \geq \frac{n}{2}$.  Denoting by $\wmax$ and $\wmin$ the largest and the smallest words in $\Wk$ with respect to $\less$, respectively, we have
\begin{equation}
\label{eq:wmax}
\wmax =   \begin{cases}
 (k{-}1)^\infty & \text{if } \sigma_{k{-}1} = +, \\
     (k{-}1)0^\infty & \text{if } \sigma_{k{-}1} = -, \sigma_0 = +,\\
     ((k{-}1)0)^\infty & \text{if } \sigma_{k{-}1} = -, \sigma_0 = -;
\end{cases}\quad
\wmin=   \begin{cases}
	0^\infty & \text{if } \sigma_0 = +,\\
     0(k{-}1)^\infty & \text{if } \sigma_0 = -,\sigma_{k{-}1} = +,\\
     (0(k{-}1))^\infty & \text{if }  \sigma_0 = -, \sigma_{k{-}1} = -.
\end{cases}
\end{equation}
When $\pi_n\neq n$ (so that $x$ and $p$ are defined) and $\zeta$ is defined by a valid $\sigma$-segmentation of $\hat{\pi}$, let 
$$ s^{(m)} =  \zeta p^{2m} \wmin.
$$
Similarly, when $\pi_n\neq 1$ (so that $y$ and $q$ are defined) and $\zeta$ is defined by a valid $\sigma$-segmentation of $\hat{\pi}$, let
$$ t^{(m)} =  \zeta q^{2m} \wmax. $$

The following result is a stronger version of ~\cite[Lem 4.6]{KAchar}. 

\begin{lem} \label{patdefined} If $\pi_n \neq n$ and $m \geq \frac{n}{2}$, then $\Pat(s^{(m)}, \Sigma_{\sigma}, n) $ is defined.  Likewise, if $\pi_n \neq 1$ and $m \geq \frac{n}{2}$, then $\Pat(t^{(m)}, \Sigma_{\sigma}, n)$ is defined.  
\end{lem}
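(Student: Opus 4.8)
The plan is to show that all $n$ initial shifts of $s^{(m)}$ (respectively $t^{(m)}$) are distinct, so that the pattern is well-defined. I will focus on $s^{(m)} = \zeta p^{2m}\wmin$, the argument for $t^{(m)}$ being symmetric via the reverse-complement correspondence described after Example~\ref{ex:sym}. Write $w = s^{(m)}$. The first step is to note that, by construction, $\zeta = w_{[1,n-1]}$ has exactly $e_{t+1}-e_t$ positions carrying the letter $t$, and the defining property of a $\sigma$-segmentation forces the following: for $1 \le i < n$, the shift $w_{[i,\infty)}$ begins with the letter $z_i$, and two shifts $w_{[i,\infty)}$, $w_{[j,\infty)}$ with $i,j < n$ that begin with the same letter $z_i = z_j = t$ are ordered by $\less$ according to condition~(a) of Definition~\ref{def:segmentation}, i.e. their relative $\less$-order on the first coordinate after cancellation matches the monotonicity $\sigma_t$ of the corresponding block of $\hat\pi^\star$; one then pushes this forward with Lemma~\ref{flipping} to conclude that the $\less$-order of $w_{[i,\infty)}$ and $w_{[j,\infty)}$ for $i,j<n$ agrees with the order of $\pi_i$ and $\pi_j$ whenever the two shifts are genuinely comparable by a finite prefix of $\zeta$. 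So the only way $\Pat(w,\Sigma_\sigma,n)$ can fail to be defined is if two shifts among $w_{[1,\infty)},\dots,w_{[n,\infty)}$ are actually equal as infinite words, or if one of the first $n-1$ shifts coincides with $w_{[n,\infty)}$.

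The heart of the argument is therefore to rule out coincidences, and this is where the hypothesis $m \ge n/2$ enters. Suppose $w_{[i,\infty)} = w_{[j,\infty)}$ with $1 \le i < j \le n$. Then $w$ is eventually periodic with period dividing $j-i$, and since $w = \zeta p^{2m}\wmin$ with $|\zeta| = n-1$, $|p| = n-x \le n-1$, and $\wmin$ itself periodic, the equality of the two shifts forces structure on $\zeta$ relative to $p$. Concretely, because $2m|p| \ge n|p|/|p|\cdot|p| \ge \dots$ — more precisely $2m \ge n$ — the block $p^{2m}$ is long enough that any period coming from an equality of shifts starting inside $\zeta$ must already be visible as a repetition $\zeta = a\,p p$ or $\zeta = a\, q q$ of the relevant suffix of $\zeta$; here one uses that $p = z_{[x,n-1]}$ and $q = z_{[y,n-1]}$ are suffixes of $\zeta$ and that $\wmin$ is compatible with iterating $p$. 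Then Lemma~\ref{lem:doubleback} applies: $\zeta = aqq$ with $\|q\|$ odd gives $p = q^2$, and $\zeta = a'pp$ with $\|p\|$ odd gives $q = p^2$ — either way the $\sigma$-segmentation defining $\zeta$ would be invalid, contradicting our standing assumption that it is valid. In the remaining case $\|p\|$ (or $\|q\|$) is even, and then the monotonicity argument via Lemma~\ref{flipping} iterated $|p|$ times forces a strict monotone chain $\pi_x < \pi_{x+|p|} < \dots$ or $\pi_x > \pi_{x+|p|} > \dots$ reaching $\pi_n$, incompatible with $\pi_x = \pi_n+1$ unless there is no repetition at all, i.e. the shifts were already distinct.

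The case where one of $w_{[1,\infty)},\dots,w_{[n-1,\infty)}$ equals $w_{[n,\infty)}$ is handled separately and more directly: $w_{[n,\infty)} = p^{2m}\wmin$, and the only shift that could equal it is $w_{[x,\infty)}$ (since $w_x = z_x$ must match the first letter) or $w_{[y,\infty)}$; but $w_{[x,\infty)} = p^{2m}\wmin\cdot(\text{tail})$ differs from $w_{[n,\infty)}$ precisely because of the extra copies of $p$ unless $p^\infty = \wmin$ in a way ruled out by primitivity of $p$ (Lemma~\ref{prefixlemsd2}) together with the explicit forms of $\wmin$ in Equation~\eqref{eq:wmax}, and a short case check on whether $\sigma_0 = +$ or $\sigma_0 = -$.

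I expect the main obstacle to be the bookkeeping in the second paragraph: carefully tracking how an equality of two infinite shifts of $\zeta p^{2m}\wmin$ — where the "seam" between $\zeta$ and $p^{2m}$ and the "seam" between $p^{2m}$ and $\wmin$ both matter — forces one of the clean repetition patterns $\zeta = aqq$ or $\zeta = a'pp$ that Lemma~\ref{lem:doubleback} is designed to exploit. The inequality $m \ge n/2$ is exactly what guarantees $2m|p|$ is at least $n-1 \ge |\zeta|$, so the periodic part is long enough to "see past" the prefix $\zeta$; making this quantitative estimate precise, and checking that $\wmin$ does not introduce a spurious shorter period, is the delicate point. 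Everything else reduces to invoking Lemmas~\ref{flipping}, \ref{prefixlemsd2}, and~\ref{lem:doubleback} together with the validity hypothesis on the $\sigma$-segmentation.
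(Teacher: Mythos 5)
Your overall skeleton is right --- the pattern is undefined exactly when two of the shifts $w_{[1,\infty)},\dots,w_{[n,\infty)}$ coincide, the case $w_{[i,\infty)}=w_{[j,\infty)}$ with $j<n$ reduces to $z_{[i,n-1]}p^{2m}\wmin=p^{2m}\wmin$ by cancelling equal prefixes, and $2m\ge n$ is what lets the periodic block ``see past'' $\zeta$ --- but the heart of your argument is misdirected. A coincidence $z_{[i,n-1]}p^{2m}\wmin=p^{2m}\wmin$ does \emph{not} force $\zeta=a'pp$ or $\zeta=aqq$: writing $p=d$ or $p=d^2$ with $d$ primitive (Lemma~\ref{prefixlemsd2}), it forces $z_{[i,n-1]}=d^r$ for some $r\ge1$, and $r=1$ is perfectly possible (e.g.\ $\zeta$ ending in a single copy of $p$), in which case there is no repetition for Lemma~\ref{lem:doubleback} to exploit. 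What the coincidence actually forces, after cancelling $d^r$ repeatedly, is $\wmin=d^\infty=p^\infty$; validity of the segmentation (i.e.\ $p\ne q^2$, $q\ne p^2$) plays no role here, and indeed the paper's proof never invokes it. Your fallback for $\|p\|$ even --- a monotone chain via Lemma~\ref{flipping} contradicting $\pi_x=\pi_n+1$ --- is the argument that proves primitivity of $p$ (Lemma~\ref{prefixlemsd2}), not distinctness of shifts, so it does not close the gap either.

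The substantive step you are missing is ruling out $\wmin=p^\infty$. By Lemma~\ref{prefixlemsd2} and the explicit forms in Equation~\eqref{eq:wmax}, this can only happen with $p=0$ (when $\sigma_0=+$) or $p=0(k{-}1)$ (when $\sigma_0=\sigma_{k-1}=-$); the case $\sigma_0=-$, $\sigma_{k-1}=+$ is free since $0(k{-}1)^\infty$ is not of the form $p^\infty$. Attributing the exclusion of the remaining cases to ``primitivity of $p$'' cannot work: $p=0$ and $p=0(k{-}1)$ are primitive. One must instead apply Lemma~\ref{flipping} to show that $p=0$ forces $\pi_{n-1}\pi_n=21$, which contradicts condition~(b) of Definition~\ref{def:segmentation} (that condition forces $e_1=0$, hence $z_{n-1}\ne0$), and that $p=0(k{-}1)$ forces $\pi_{n-2}\pi_{n-1}\pi_n=2n1$, which contradicts condition~(e) (either $e_1=0$ or $e_{k-1}=n$, so $\zeta$ omits the letter $0$ or the letter $k{-}1$). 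This is precisely the work that conditions (b)--(e) of the segmentation definition exist to do, and it is absent from your proposal. A smaller inaccuracy: your claim that the only shift that could equal $w_{[n,\infty)}$ is $w_{[x,\infty)}$ or $w_{[y,\infty)}$ is unjustified; a priori any $i$ with $z_{[i,n-1]}$ a power of $d$ is a candidate.
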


\begin{proof} We prove the statement for $s^{(m)}$; the proof for $t^{(m)}$ is analogous.  By Lemma~\ref{prefixlemsd2}, the word $p$ is primitive or of the form $p=d^2$ for some primitive word $d$ with $\| d\|$ odd.  If $p$ is primitive, write $d = p$, so that in either case, $p^\infty = d^\infty$ with $d$ primitive.

For $\Pat(s^{(m)}, \Sigma_{\sigma}, n)$ to not be defined, we must have indices $1 \leq i < j \leq n-1$ such that $z_{[i, n-1]} p^{2m} \wmin = z_{[j, n-1]}p^{2m}\wmin$ or $z_{[i, n-1]}p^{2m} \wmin = p^{2m}\wmin$; noting that the first case reduces to the second upon canceling the prefix $z_{[i, i + n-j]} = z_{[j, n-1]}$ from both sides.  Since $z_{[i, n-1]}p^{2m} \wmin = p^{2m}\wmin$ and $2m \ge n$ implies that $(z_{[i, n-1]} p^{2m})_{[1, 2m(n-x)]} = p^{2m}$, we must have $z_{[i, n-1]} = d^r$ for some $r \ge 1$, where $d$ is primitive as above.  Thus, canceling equal prefixes $(z_{[i, n-1]} p^{2m})_{[1, 2m(n-x)]} = p^{2m}$ from $z_{[i, n-1]}p^{2m} \wmin = p^{2m}\wmin$, we now conclude that $d^r \wmin = \wmin$.   It follows that $\wmin$ must begin with $d^r$, and canceling prefixes and repeating the same argument we conclude that $\wmin = d^\infty = p^\infty$.  

Consider first the case when $\sigma_0 = +$, and so $\wmin = 0^\infty$. Since, by Lemma~\ref{prefixlemsd2}, $p$ is primitive or $p = d^2$ where $d$ is primitive and $\|d\|$ is odd, $\Pat(s^{(m)}, \Sigma_{\sigma}, n)$ would only be undefined in the case that $p = 0$.  Indeed, if we were to have $p = d^2$, then the only possibility is $d = 0$, which does not satisfy that $\|d\|$ is odd.   We claim that, for any permutation $\pi$, we have $p \neq 0$.  Suppose for a contradiction that $p = 0$, and note that $\pi_{x} = \pi_{n-1}$.  If there is an index $i < n-1$ such that $\pi_i < \pi_{x}$, take the maximal one.  By Lemma~\ref{flipping}, we have $z_i = z_x = 0$, and by Lemma~\ref{flipping}, we have $\pi_{i + 1} < \pi_{x + 1} = \pi_n < \pi_x$, a contradiction to the maximality of $i$.  We conclude that that there is no such index, and so $\pi_x = 2$ and $\pi_{n-1} \pi_n = 21$.   However, by Definition~\ref{def:segmentation}(b), we must have $e_1 =0$, and so $p = z_{n-1} = 0$ is impossible. Hence, when $\sigma_0 = +$, we have that $\Pat(s^{(m)}, \Sigma_\sigma, n)$ is always defined.

Consider the case when $\sigma_0 = -$ and $\sigma_{k-1} = +$.  We have  $\wmin = 0(k{-}1)^\infty$, which is not of the form $p^\infty$ for any $p$.  Thus, $\Pat(s^{(m)}, \Sigma_{\sigma}, n)$ is defined.  

Now consider the case when $\sigma_0 = \sigma_{k-1} = -$, and so $\wmin = (0(k{-}1))^\infty$.  For $\Pat(s^{(m)}, \Sigma_{\sigma}, n)$ to be undefined, we must have $p = 0(k{-}1)$.  Indeed, if we had $p = d^2$, the only possibility is $d = 0(k{-}1)$, which satisfies $\|d\| = 2$ and is not permitted by Lemma~\ref{prefixlemsd2}.  We claim that, for any permutation $\pi$, we have $p \neq 0(k{-}1)$. Note that $x = n - 2$ and so $\pi_{n- 2} = \pi_n + 1$ in this case.  If there is an index $i < n-2$ such that $\pi_i < \pi_{n-2}$, take the maximal one.  Since $z_{n-2} = 0$, Lemma~\ref{flipping} implies that $z_i = z_{n-2} = 0$ and $\pi_{i + 1} > \pi_{n-1}$.  Similarly, since $z_{n-1} = k{-}1$, applying Lemma~\ref{flipping} implies $z_{i+1} = z_{n-1} = k{-}1$ and $\pi_{i + 2} < \pi_n < \pi_{n-2}$, contradicting the maximality of $i$.  It follows that $\pi_{i} > \pi_{n-2}$ for all $i < n-2$.  We conclude that $\pi_{n-2} = 2$ and $\pi_n = 1$.  Moreover, if there were an index $j$ such that $\pi_j > \pi_{n-1}$, then Lemma~\ref{flipping} would give $z_{j} = z_{n-1} = k{-}1$ and $\pi_{j + 1} < \pi_{n} + 1 = 2$, which is impossible.  We conclude that $\pi_{n-1} = n$.  However, by Definition~\ref{def:segmentation}(d), in this case, a $\sigma$-segmentation of $\pi_{n-2} \pi_{n-1} \pi_n = 2n1$ has either $e_{k-1} = n$ or $e_1 = 0$.  If $e_{k-1} = n$, then $\zeta$ does not contain the letter $k{-}1$ and if $e_1 = 0$, then $\zeta$ does not contain the letter $0$.  Hence, in such a case, $p = 0(k{-}1)$ is impossible.  
\end{proof}

The proof of the following lemma is based on a similar statement given in \cite[Lem~4.7,~4.8]{KAchar}.  

\begin{lem}[\cite{KAchar}]  \label{nothingbetween}  Let $m \geq \frac{n}{2}$.  For the word $s = s^{(m)}$, we have $s_{[n, \infty)} \less s_{[x, \infty)}$ and there is no $1 \leq c \leq n$ such that $s_{[n, \infty)} \less s_{[c, \infty)} \less s_{[x, \infty)}$.   Likewise, for the word $t = t^{(m)}$, we have $t_{[y, \infty)} \less t_{[n, \infty)}$ and there is no $1 \leq c \leq n$ such that $t_{[y, \infty)}$ $\less t_{[c, \infty)} \less t_{[n, \infty)}$.  \end{lem}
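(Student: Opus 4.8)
The plan is to prove the statement for $s = s^{(m)} = \zeta p^{2m}\wmin$; the statement for $t = t^{(m)}$ then follows by the reverse-complement symmetry described after Example~\ref{ex:sym} (reversing $\sigma$, complementing $\pi$, and interchanging the roles of $p$ and $q$, of $x$ and $y$, and of $\wmax$ and $\wmin$), so it will suffice to handle $s$. The first step is to verify $s_{[n,\infty)} \less s_{[x,\infty)}$. Since $\pi_x = \pi_n + 1$, we have $z_x = z_{[x,x]}$ is the first letter of $p$, and more precisely $s_{[x,\infty)} = p\, p^{2m}\wmin = p^{2m+1}\wmin$ while $s_{[n,\infty)} = p^{2m}\wmin$. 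So the claim $s_{[n,\infty)}\less s_{[x,\infty)}$ is exactly $p^{2m}\wmin \less p^{2m+1}\wmin$. If $\|p\|$ is even, canceling the common prefix $p^{2m}$ (which has even value $\|p^{2m}\| = 2m\|p\|$, so the order is preserved) reduces this to $\wmin \less p\,\wmin$, which holds because $\wmin$ is the $\less$-smallest word and, by Lemma~\ref{patdefined}'s analysis, $p\wmin \neq \wmin$ (equivalently $\wmin\neq p^\infty$). If $\|p\|$ is odd, then $\|p^{2m}\|$ is even again, so the same cancellation applies and we need $\wmin \less p\wmin$, which holds for the same reason. (One should double-check the borderline possibility $p^\infty = \wmin$ is excluded — this was done inside the proof of Lemma~\ref{patdefined}, where it is shown that $p$ is never equal to $0$, to $0(k{-}1)$, etc., exactly the patterns that would make $p^\infty = \wmin$.)

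The substantive part is showing that no shift $s_{[c,\infty)}$ with $1\le c\le n$ lies strictly between $s_{[n,\infty)}$ and $s_{[x,\infty)}$. Suppose $s_{[n,\infty)} \less s_{[c,\infty)} \less s_{[x,\infty)}$ for some $c$. First handle $c \le n-1$: then $s_{[c,\infty)} = z_{[c,n-1]} p^{2m}\wmin$, and since all three words $s_{[n,\infty)} = p^{2m}\wmin$, $s_{[c,\infty)}$, and $s_{[x,\infty)} = p^{2m+1}\wmin$ agree with $p^{2m}$ on a long common structure, I would argue that $z_{[c,n-1]}$ must itself be a prefix of $p^\infty$. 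Concretely, because $s_{[c,\infty)}$ is sandwiched between two words beginning with $p^{2m}$ and $2m \ge n$ (so $p^{2m}$ is longer than $z_{[c,n-1]}$), the word $s_{[c,\infty)}$ must begin with $z_{[c,n-1]}$ followed by a long stretch forcing $z_{[c,n-1]}$ to be an initial segment of $p^{2m}$, hence of $p^\infty$. Writing the length of $z_{[c,n-1]}$ as $\ell = n - c$, and using that $p$ is primitive or $p = d^2$ with $d$ primitive (Lemma~\ref{prefixlemsd2}), one deduces that $z_{[c,n-1]}$ is a prefix of $d^\infty$ where $p^\infty = d^\infty$. After canceling this common prefix (tracking the parity of $\|z_{[c,n-1]}\|$ to know whether $\less$ flips), the inequality $s_{[n,\infty)} \less s_{[c,\infty)} \less s_{[x,\infty)}$ becomes a statement comparing $\wmin$ (or a tail of $p^\infty$ followed by $\wmin$) against shifts of $p^\infty\wmin$; I expect this to force either $\wmin = p^\infty$ (contradiction, as above) or $z_{[c,n-1]} = p^r$ for some $r$ with $\pi_c = \pi_n + 1 = \pi_x$, forcing $c = x$, contradicting strictness. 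The case $c = n$ is immediate since $s_{[n,\infty)}\less s_{[c,\infty)}$ fails.

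The main obstacle — and the step I would spend the most care on — is the bookkeeping in the sandwiched case: pinning down exactly why $z_{[c,n-1]}$ must be a prefix of $p^\infty$ and then correctly propagating the parity of $\|\cdot\|$ through the repeated cancellations, since each cancellation of an odd-weight block reverses the direction of $\less$. The cleanest route is probably an inductive "peeling" argument mirroring the one in Lemma~\ref{patdefined}: assume $s_{[n,\infty)} \less s_{[c,\infty)} \less s_{[x,\infty)}$, cancel one copy of $p$ (or of $d$) at a time from all three words, at each stage using Definition~\ref{def:less} to track whether we are now looking at $\less$ or $\gess$, and show the only way the chain of strict inequalities can persist forever is $s_{[c,\infty)} = p^\infty$ or $\wmin = p^\infty$, both impossible; otherwise the peeling terminates with $c$ aligned to $x$. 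I would also invoke Lemma~\ref{lem:doubleback} and the validity of the $\sigma$-segmentation to rule out the degenerate coincidences (e.g. $p = q^2$) that could otherwise create a spurious intermediate shift, since validity is exactly what was arranged to avoid $p = q^2$ and $q = p^2$.
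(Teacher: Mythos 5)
Your treatment of the first inequality $s_{[n,\infty)}\less s_{[x,\infty)}$ is correct and matches the paper, and your overall strategy for the second claim---sandwiching $s_{[c,\infty)}$ between $p^{2m}\wmin$ and $p^{2m+1}\wmin$, arguing that $z_{[c,n-1]}$ must align with $p^\infty$, and peeling off copies of $p$---is also the paper's. But the decisive step is left at the level of ``I expect this to force\dots'', and the dichotomy you state there is not the right one. Once one knows that the tail $v$ of $s_{[c,\infty)}$ past the initial $p^{2m}$ begins with $p$, the argument must split on the parity of $\|p\|$. If $\|p\|$ is even, then $p\wmin$ is the $\less$-smallest word beginning with $p$, so $v\less p\wmin$ is immediately contradicted; that branch is consistent with your sketch. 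If $\|p\|$ is odd, however, nothing forces $c=x$: taking $c=n-2|p|$ with $z_{[c,n-1]}=p^2$ gives $s_{[c,\infty)}=p^{2m+2}\wmin$, and a short computation (cancel $p^{2m}$, which has even weight, then use $\wmin\less p\wmin$ and one more cancellation of the odd-weight block $p$) shows this word genuinely lies strictly between $p^{2m}\wmin$ and $p^{2m+1}\wmin$. What rules this out is not alignment or primitivity but the fact that $\zeta=app$ with $\|p\|$ odd forces $q=p^2$ by Lemma~\ref{lem:doubleback}, contradicting the validity of the $\sigma$-segmentation. Validity is therefore not a side condition cleaning up ``degenerate coincidences''; it is the entire content of the contradiction in the odd case, and a chain of deductions terminating in ``$c=x$'' cannot be right, because the conclusion of the lemma is simply false for invalid segmentations (this is the mirror of what Example~\ref{mainex}(c) exhibits).

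You do name Lemma~\ref{lem:doubleback} and validity in your closing paragraph, so the missing ingredient is within reach, but as written the main argument points in the wrong direction and the parity case split is never performed. Two smaller gaps: the claim $c<x$ (needed so that ``$v$ begins with $p$'' translates into ``$\zeta$ ends in $pp$'') itself requires an argument from primitivity of $p$, or of $d$ when $p=d^2$ with $\|d\|$ odd, which your sketch does not supply; and the step showing that $v$ begins with a full copy of $p$ (rather than merely a prefix of $p^\infty$) uses $m\ge\frac{n}{2}$ together with primitivity to force one of the occurrences of $p$ (or $d$) in $s_{[c,\infty)}$ to coincide with the first occurrence in $s_{[x,\infty)}$---this is where the hypothesis on $m$ actually enters, and it should be made explicit.
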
  

\begin{proof} 
We will prove the statement for $s$; the one for $t$ is analogous. The fact that $s_{[n, \infty)} = p^{2m} \wmin \less s_{[x, \infty)} = p^{2m+1 } \wmin$ follows because, after canceling equal prefixes, it is equivalent to  $\wmin \less p \wmin$, which holds because $\wmin$ is the smallest word in $\Wk$ with respect to $\less$ and the inequality must be strict because $\Pat(s^{(m)}, \Sigma_{\sigma}, n)$ is defined by Lemma~\ref{patdefined}.  

Next we prove that there is no $1 \leq c \leq n$ such that $s_{[n, \infty)} \less s_{[c, \infty)} \less s_{[x, \infty)}$, that is,
$$p^{2m} \wmin \less s_{[c, \infty)} \less p^{2m+1} \wmin.$$
Suppose for contradiction that such a $c$ existed. Then $s_{[c, \infty)} = p^{2m}v$ for some word $v$ satisfying $\wmin \less v \less p \wmin$.

We claim that $c<x$. If $p$ is primitive, this is because the first $p$ in $s_{[c, \infty)}$ cannot overlap with both the first and second occurrences of $p$ in $s_{[x, \infty)}$. If $p$ is not primitive, then by Lemma~\ref{prefixlemsd2}, $p = d^2$  where $d$ is primitive and $\|d\|$ is odd. The only way to have $c> x$ would be if $v = d\wmin$ is the largest word beginning with $d$, but this is impossible because $v \less p \wmin = d^2 \wmin$.

Next we show that $v$ begins with a $p$. Consider first the case when $p$ is primitive.  
Since $m \geq \frac{n}{2}$, one of the initial $n-2$ occurrences of $p$ in $s_{[c, \infty)}$ must coincide with the first occurrence of $p$ in $s_{[x, \infty)}$, since $|p^{n-2}|> n-1$, and so $v$ begins with $p$.  
If $p$ is not primitive, then $p = d^2$ where $d$ is primitive and $\|d\|$ is odd, by Lemma~\ref{prefixlemsd2}.  Since $|d^{2(n-2)}| > n -1$, one of the initial $2(n-2)$ occurrences of $d$ in $s_{[c, \infty)}$ must coincide with the first occurrence of $d$ in $s_{[x, \infty)}$.  Since $s_{[x, \infty)}$ begins with $d^{2(n-1)}$ and $c < x$, we have that $v$ begins with $d^2 = p$.

If $\|p\|$ is even, the fact that $v$ begins with a $p$ contradicts that $v \less p \wmin$, since $p \wmin$ is the smallest word beginning with $p$.  If $\|p\|$ is odd, then the above argument causes $\zeta$ to be of the form $\zeta = app$ for some $a$.  By Lemma~\ref{lem:doubleback}, this implies that $q=p^2$, contradicting the fact that $\zeta$ was obtained from a valid $\sigma$-segmentation. 

The proof for $t$ follows in a similar fashion.   \end{proof}

The statement of the following lemma appears in~\cite[Lem~4.9]{KAchar}. Here we give the first complete proof.

\begin{lem}\label{lem:patternsij} Let $w = \zeta w_{[n, \infty)} \in \Wk$ be such that $\Pat(w, \Sigma_{\sigma}, n)$ is defined.
If $w_{[x, \infty)} \gess w_{[n, \infty)}$ and there is no $1 \leq c \leq n$ such that $w_{[n, \infty)} \less w_{[c, \infty)} \less w_{[x, \infty)}$, then $\Pat(w, \Sigma_{\sigma}, n) = \pi$. Likewise, if $w_{[y, \infty)} \less w_{[n, \infty)}$ and there is no $1 \leq c \leq n$ such that $w_{[y, \infty)} \less w_{[c, \infty)} \less w_{[n, \infty)}$, then $\Pat(w,  \Sigma_{\sigma}, n) = \pi$.    \end{lem}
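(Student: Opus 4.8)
The plan is to establish the first statement (the second following by the reverse-complement symmetry described after Example~\ref{ex:sym}), and to do so by induction on $i$, showing simultaneously for all pairs $1 \le i \le n$ that the position of $w_{[i,\infty)}$ in the sorted order of the shifts $w_{[1,\infty)}, \dots, w_{[n,\infty)}$ equals $\pi_i$. Since $\Pat(w,\Sigma_\sigma,n)$ is given to be defined, the $n$ shifts $w_{[1,\infty)},\dots,w_{[n,\infty)}$ are pairwise distinct under $\less$, so there is a well-defined permutation $\tau = \Pat(w,\Sigma_\sigma,n)$; the goal is to show $\tau = \pi$. The key structural input is Lemma~\ref{flipping}, which relates the order of $\pi_i,\pi_j$ to the prefix letters $z_i,z_j$ and to $\sigma_{z_i}$, exactly mirroring the definition of $\less$; and the two hypotheses, which pin down the \emph{rank of position $n$ relative to its neighbors} $x$ (the position with $\pi_x = \pi_n+1$) and $y$.

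First I would record the elementary fact that, since $w = \zeta w_{[n,\infty)}$ with $\zeta = z_{[1,n-1]}$ the prefix of the $\sigma$-segmentation, the letter $w_i$ equals $z_i$ for $i < n$, and that the $\sigma$-segmentation was built so that $e_t < \pi_i \le e_{t+1}$ exactly when $z_i = t$. The core comparison step is then: given $i,j < n$ with $\pi_i < \pi_j$, I want to show $w_{[i,\infty)} \less w_{[j,\infty)}$. If $z_i < z_j$ (case (a) of Lemma~\ref{flipping}), then $w_i < w_j$ and we are immediately done. If $z_i = z_j$ (case (b)), then $w_i = w_j = z_i$; if $\sigma_{z_i} = +$ then $\pi_{i+1} < \pi_{j+1}$ and I recurse on the pair $(i+1,j+1)$ (continuing to shift right until the indices reach $n$, at which point the $w_{[n,\infty)}$ tails agree and I must instead use the hypotheses); if $\sigma_{z_i} = -$ then $\pi_{i+1} > \pi_{j+1}$ and by the definition of $\less$ I need $w_{[i+1,\infty)} \gess w_{[j+1,\infty)}$, which is the recursive claim with the roles of $i+1,j+1$ swapped. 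This is a descent-into-smaller-instances argument, and it terminates because each step increments both indices; the base case is when one of the indices becomes $n$.

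The main obstacle — and the only place the two hypotheses are genuinely used — is precisely that base case, i.e.\ comparisons involving the shift $w_{[n,\infty)}$ itself. Here the recursion can stall because the tails $w_{[n,\infty)}$ and $w_{[c,\infty)}$ need not be "prefix-comparable" the way $\zeta$-letters are. The idea is: $x$ and $y$ are the positions of the values $\pi_n+1$ and $\pi_n-1$, so $w_{[x,\infty)}$ and $w_{[y,\infty)}$ are the "immediate neighbors" of $w_{[n,\infty)}$ that $\tau$ would have to produce; the hypotheses say $w_{[y,\infty)} \less w_{[n,\infty)} \less w_{[x,\infty)}$ (the left inequality from the $t$-version or, for $s$, one first checks $w_{[y,\infty)}\less w_{[n,\infty)}$ symmetrically — but here only the stated hypothesis is available, so for the $w_{[x,\infty)}$ side I also need the dual fact $w_{[y,\infty)}\less w_{[n,\infty)}$, which I would either assume symmetrically or derive) and, crucially, that \emph{no} shift $w_{[c,\infty)}$ lies strictly between $w_{[n,\infty)}$ and $w_{[x,\infty)}$. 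Combining: for any $i \ne n$, by the comparison step applied to $(i,x)$ or $(x,i)$ I know how $w_{[i,\infty)}$ compares to $w_{[x,\infty)}$ whenever $\pi_i \ne \pi_x = \pi_n+1$; the "nothing between" hypothesis then forces $w_{[i,\infty)} \less w_{[n,\infty)}$ whenever $\pi_i \le \pi_n$ and $w_{[i,\infty)} \gess w_{[n,\infty)}$ whenever $\pi_i \ge \pi_x$, i.e.\ $w_{[n,\infty)}$ has rank exactly $\pi_n$ among all the shifts. Since the other $n-1$ shifts are already correctly ordered among themselves by the comparison step (that argument only needed $i,j<n$ in its recursion until possibly hitting index $n$, at which point this very conclusion closes the loop), we get $\tau_i = \pi_i$ for all $i$, hence $\tau = \pi$.

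To make the logic clean I would actually run the induction on $n - \max(i,j)$ for the pairwise comparison claim "$\pi_i < \pi_j \Rightarrow w_{[i,\infty)}\less w_{[j,\infty)}$, allowing $i$ or $j$ to equal $n$," feeding in the hypotheses as the instances where $\max(i,j)=n$ and feeding Lemma~\ref{flipping} as the inductive step for $\max(i,j)<n$; care is needed because when $\sigma_{z_i}=-$ the recursion flips the inequality, so the statement being inducted on should really be the biconditional "$\pi_i<\pi_j \iff w_{[i,\infty)}\less w_{[j,\infty)}$" with Lemma~\ref{flipping} and its contrapositive both invoked. I expect the write-up to be short once this biconditional-by-induction framing is fixed; the delicate point to get exactly right is the interaction of the parity of $\|z_{[i,j-1]}\|$ with which of the two hypotheses (the $\less$-side or the $\gess$-side) is needed, and making sure that when a recursion reaches index $n$ on only one side, the correct one of the two hypotheses applies.
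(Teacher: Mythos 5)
Your overall strategy matches the paper's: reduce the claim to pairwise statements ``$\pi_i<\pi_j\Rightarrow w_{[i,\infty)}\less w_{[j,\infty)}$'', propagate comparisons with Lemma~\ref{flipping} while both indices are below $n$, and invoke the two hypotheses when a comparison reaches index $n$ by routing it through the neighbor $x$. The gap is in termination. Your proposed measure $n-\max(i,j)$ does decrease inside the ``both indices $<n$'' step, but the base case does not actually terminate the recursion: when a pair reaches index $n$, the hypothesis only lets you replace $n$ by $x$, and $x$ can be small, so the measure jumps back up and the process can in principle cycle forever through pairs of the form $(x,l)$ or $(l,x)$. Saying ``at which point this very conclusion closes the loop'' is circular, because the rank of $w_{[n,\infty)}$ among all shifts is exactly what you are trying to prove, and it depends on the comparisons $S(i,x)$ whose recursion passes back through index $n$.

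Ruling out this cycling is the real content of the proof and requires ingredients your proposal never touches. One must show that an infinite reduction forces $w_{[x,\,x+n-l-1]}=w_{[l,\,n-1]}$ and hence that $p$ equals a nontrivial cyclic shift of itself, so $p$ is not primitive; by Lemma~\ref{prefixlemsd2} the only surviving case is $p=d^2$ with $d$ primitive and $\|d\|$ odd, and that case must be settled by a separate direct argument. That argument crucially uses the hypothesis that $\zeta$ comes from a \emph{valid} $\sigma$-segmentation (Definition~\ref{def:invalid}, via Lemma~\ref{lem:doubleback}): one shows that $S(l,x)$ failing would force $\pi_l=\pi_y$ and hence $p=q^2$, contradicting validity, and that $S(x,l)$ failing would force $w_{[n,\infty)}=d^\infty$, contradicting the betweenness hypothesis. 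Since your write-up invokes neither primitivity of $p$ nor validity of the segmentation, it cannot close this case; as it stands the argument does not go through. (A minor additional point: the first assertion of the lemma needs only the $x$-side hypotheses; you should not need to ``assume symmetrically'' the $y$-side inequality, and the paper's case~(ii) shows how $S(i,n)$ reduces to $S(i,x)$ using only the absence of shifts between $w_{[n,\infty)}$ and $w_{[x,\infty)}$.)
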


\begin{proof} We prove the assertion about $w_{[x, \infty)}$; the one about $w_{[y, \infty)}$ follows similarly. For $1 \leq i, j \leq n$, let $S(i, j)$ be the statement 
$$\pi_i < \pi_j \text{ implies } w_{[i, \infty)} \less w_{[j, \infty)}.$$
To show that  $\Pat(w,  \Sigma_{-N}, n) = \pi$, we will prove $S(i, j)$ for all $1 \leq i, j \leq n$ with $i \neq j$.  We consider three cases.  

\begin{enumerate}[(i)]
\item Case $i = n$.  Suppose that $\pi_n < \pi_j$.  By assumption, $w_{[n, \infty)} \less w_{[x, \infty)}$.  If $j = x$, we are done.  If $j \neq x$, then $\pi_n < \pi_j$  implies that $\pi_x < \pi_j$ since $\pi_x = \pi_n + 1$.  Thus, if $S(x, j)$ holds, then $w_{[n, \infty)} \less w_{[x, \infty)} \less w_{[j, \infty)}$, and so $S(n, j)$ must hold as well.  We have reduced $S(n, j)$ to $S(x, j)$.  Equivalently, $\neg S(n, j) \rightarrow \neg S(x, j)$, where $\neg$ denotes negation and $\rightarrow$ denotes implication.  

\item Case $j = n$.  Suppose that $\pi_i < \pi_n$.  In particular,  $i \neq n$ and  $\pi_i < \pi_x=\pi_n+1$. By assumption, in order to prove that $w_{[i, \infty)} \less w_{[n, \infty)}$, it is enough to show that $w_{[i, \infty)} \less w_{[x, \infty)}$.   Thus, we have reduced $S(i, n)$ to $S(i, x)$. 

\item Case $i, j < n$.  Suppose that $\pi_i < \pi_j$.  Let $m$ be such that $w_{[i, i + m -1]} = w_{[j, j + m -1]}$ and $w_{i + m} \neq w_{j + m}$.  First assume that $i + m, j + m \leq n -1$.  If $\|w_{[i, i + m -1]}\|$ is even, then Lemma~\ref{flipping} applied $m$ times to $\pi_{i} < \pi_{j}$ implies that $\pi_{i + m} < \pi_{j + m}$.  By Lemma~\ref{flipping}, we must have $w_{i + m} \leq w_{j + m}$, and we conclude that $w_{i + m} < w_{j + m}$.  Therefore, $w_{[i + m, \infty)} \less w_{[j + m, \infty)}$, and thus $w_{[i, \infty)} \less w_{[j, \infty)}$.
Similarly, if $\|w_{[i, i + m -1]}\|$ is odd, Lemma~\ref{flipping} applied $m$ times implies that $\pi_{i + m} > \pi_{j + m}$.  By Lemma~\ref{flipping}, we must have $w_{i + m} > w_{j + m}$ because $w_{i + m} \neq w_{j + m}$.  Therefore, $w_{[i + m, \infty)} \gess w_{[j + m, \infty)}$, and thus $w_{[i, \infty)} \less w_{[j, \infty)}$ again. This shows that if $i + m, j + m \leq n-1$, then $S(i,j)$ holds.

Suppose now that $i + m \geq n$ or $j+m \geq n$, and let $m'$ be the minimal index such that either $i + m' = n$ or $j + m' = n$.  Suppose first that $i + m' = n$ and $\|w_{[i, i  + m'-1]}\|$ is even. We claim that $S(i, j)$ reduces to $S(n, j + m')$ in this case. Indeed, suppose that $S(n, j + m')$, and let us show that $S(i,j)$ holds as well. If $\pi_i < \pi_j$, then Lemma~\ref{flipping} and the fact that $w_{[i, i + m' -1]} = w_{[j, j + m' - 1]}$ gives $\pi_n = \pi_{i + m'} < \pi_{j + m'}$.  Since $S(n, j +m')$ holds, we have $w_{[n, \infty)} \less w_{[j + m', \infty)}$, which implies $w_{[i, \infty)} \less w_{[j, \infty)}$, as desired.  Thus, $S(i, j)$ reduces to $S(n, j + m')$.  

Similarly, if $i + m' = n$ and $\|w_{[i, i + m'-1]}\|$ is odd, then $\pi_i < \pi_j$ and $w_{[i, i + m' - 1]} = w_{[j, j + m' - 1]}$ implies $\pi_{n} = \pi_{i + m'} > \pi_{j + m'}$ by Lemma~\ref{flipping}. If $S(j +m',n)$ holds, then $w_{[n, \infty)} \gess w_{[j + m', \infty)}$, which implies $w_{[i, \infty)} \less w_{[j, \infty)}$. Again, $S(i, j)$ reduces to $S(j + m', n)$ in this case.  

Now consider the case when $j + m' = n$ and $\|w_{[j, j + m'-1]}\|$ is odd. Then $\pi_i < \pi_j$ and $w_{[i, i + m' -1]} = w_{[j, j + m' - 1]}$ implies $\pi_{i + m'} > \pi_{j + m'} = \pi_n$ by Lemma~\ref{flipping}.  Therefore, $S(i, j)$ reduces to $S(n, i + m')$ in this case.  Finally, if $j + m' = n$ and $\|w_{[j, j + m'-1]}\|$ is even, $S(i, j)$ reduces to $S(i + m', n)$ by a similar argument.  
\end{enumerate}

In order to conclude that $S(i, j)$ holds for every $i,j$, we must show that the above process of reductions eventually terminates. Suppose for contradiction that the process goes on indefinitely.  Then at some point we would reach $S(x, l)$ with $l > x$, or $S(l, x)$ with $l > x$.    

\begin{itemize}
\item Suppose that we reach $S(x, l)$ with $l > x$. Since we assumed that the process does not terminate, case~(iii) above implies that $w_{[x, x+ n-l -1]} = w_{[l, n - 1]}$.
If $\|w_{[l, n-1]}\|$ is odd, then we get $\neg S(x,l) \rightarrow \neg S(n, x + n-l) \rightarrow \neg  S(x, x + n-l)$, using cases~(iii) and~(i). If $\|w_{[l, n-1]}\|$ is even, then $\neg S(x,l) \rightarrow \neg S(x + n-l,n) \rightarrow \neg  S(x + n-l,x)$, using cases~(iii) and~(ii). 
\item  Suppose we reach $S(l, x)$ with $l > x$.
Since the process does not terminate, case~(iii) implies that $w_{[l, n - 1]}=w_{[x, x+ n-l -1]}$.
If $\|w_{[l, n-1]}\|$ is odd, then we get $\neg S(l,x) \rightarrow \neg S(x + n-l,n) \rightarrow \neg  S(x + n-l,x)$. If $\|w_{[l, n-1]}\|$ is even, then $\neg S(l,x) \rightarrow \neg S(n,x + n-l) \rightarrow \neg  S(x,x + n-l)$.
\end{itemize}

In all cases, we conclude that $w_{[x, x+ n-l -1]} = w_{[l, n - 1]}$ and we reach $S(x,x + n-l)$ or $S(x + n-l,x)$. Now we can repeat the argument with $x+n-l$ playing the role of $l$ to deduce that $w_{[x, l -1]} = w_{[x+n-l, n - 1]}$ and obtain a reduction back to $S(x,l)$ or $S(l,x)$.

Combining the above equalities, we obtain $w_{[x, x + n - l - 1]} w_{[x + n -l - 1, n -1]} = w_{[x, l-1]} w_{[l, n-1]}  = p$.   Since $p$ is equal to some of its non-trivial cyclic shifts, it follows that $p$ is not primitive.  Thus, in the case that $p$ is primitive, we have verified the statements $S(x, l)$ and $S(l, x)$.

By Lemma~\ref{prefixlemsd2}, the case that remains is when $p = d^2$, where $d = w_{[x, l - 1]} = w_{[l, n-1]}$ and $\|d\|$ is odd.  Let $d = w_{[x, l -1]}$ and so we may write $p = d^2$.   It remains to verify the statements $S(x, l)$ and $S(l, x)$ in this case. 
 
To verify $S(l, x)$, suppose that $\pi_l < \pi_x$.  Since $\pi_n = \pi_x - 1$ and $l \neq n$, we have $\pi_{l} < \pi_n < \pi_x$.   We claim that, in this case, $\pi_{l} = \pi_y$.  
Suppose for contradiction that  $\pi_{l} \neq \pi_{n} - 1= \pi_y$.  Let $1 \leq h < n$  be the largest index such that $\pi_{l} < \pi_{h} < \pi_x$.

Consider first the case $h > l$. Since $w_{[l, n-1]} = w_{[x, l-1]}$, Lemma~\ref{flipping} applied $n-l$ times to $\pi_l < \pi_h < \pi_x$ implies \begin{equation}\label{eq:1}w_{[l, l + n - h - 1]} = w_{[h, n-1]} = w_{[x, x + n - h -1]}.\end{equation}   If $\|w_{[h, n-1]}\|$ is odd, then $n-h$ applications of Lemma~\ref{flipping} gives $\pi_{l + n - h} > \pi_{n} > \pi_{x + n - h}$, and so $\pi_{l + n -h} > \pi_x > \pi_{x + n - h}$ as well. Applying Lemma~\ref{flipping} $h-l$ more times, we conclude \begin{equation}\label{eq:2}w_{[l + n - h, n-1]} = w_{[x, x+ h - l - 1]}= w_{[l, h-1]},\end{equation} where in the last equality we used that $w_{[l, n-1]} = w_{[x, l-1]}$.
On the other hand, if $\|w_{[h, n-1]}\|$ is even, we get $\pi_{l + n - h} < \pi_{n} < \pi_{x + n - h}$ and $\pi_{l + n -h} < \pi_x < \pi_{x + n - h}$, from where Equation~\eqref{eq:2} holds as well. Combining Equations~\eqref{eq:1} and~\eqref{eq:2}, we get $d = w_{[l, n-1]} = w_{[l, h - 1]} w_{[h, n-1]} =  w_{[l + n - h, n -1]} w_{[l, l + n - h -1]}$, which states that $d$ is equal to one of its non-trivial cyclic shifts, thus contradicting that it is primitive.  

Now consider the case $h < l$.  Applying Lemma~\ref{flipping} $|d| = l - x$ times to the inequalities $\pi_l < \pi_{h} < \pi_x$, we obtain $\pi_n > \pi_{h + l - x} > \pi_{l}$ since $\|d\|$ is odd.  Therefore, $\pi_x > \pi_{h + l - x} > \pi_{l}$, which is a contradiction to the fact that we chose $h$ to be the largest index such that $\pi_{l} < \pi_{h} < \pi_x$.   

It follows that there is no index $h \neq n$ such that $\pi_{l} < \pi_{h} < \pi_x$.  We conclude that $\pi_{l} = \pi_y$, from which it follows that $d=q$ and $p = q^2$.  However, this contradicts that $\zeta$ comes from a valid $\sigma$-segmentation of $\hat{\pi}^{\star}$.
Since the assumption $\pi_l < \pi_x$ leads to a contradiction, the statement $S(l,x)$ trivially holds.

To verify $S(x, l)$, suppose now that $\pi_x < \pi_l$. We must show that $w_{[x, \infty)} \less w_{[l, \infty)}$.  Suppose to the contrary that $w_{[l, \infty)} \less w_{[x, \infty)}$.  Then, by assumption, $w_{[l, \infty)} \less w_{[n, \infty)} \less w_{[x, \infty)}$.  Hence,
\begin{equation}\label{eq:ineqd}
d^2 w_{[n, \infty)} \less w_{[n, \infty)} \less d w_{[n, \infty)}.
\end{equation}
Therefore, $w_{[n, \infty)}$ must begin with $d$, and by canceling equal prefixes and repeating this process,  we determine that the only option would be $w_{[n, \infty)} = d^\infty$, which does not satisfy the inequalities~\eqref{eq:ineqd}.  We conclude that $w_{[x, \infty)} \less w_{[l, \infty)}$, and so $S(x, l)$ holds.

We have shown that if the above process of reductions does not terminate, then it reaches $S(x,l)$ or $S(l,x)$ where $l-x=n-l$ and $p=d^2$. We have now shown that both $S(x,l)$ and $S(l,x)$ hold in this case. It follows that $S(i, j)$ holds for all $1 \leq i, j \leq n$ with $i \neq j$.  
\end{proof}

\begin{lem} \label{inducepi} If $\pi_n \neq n$ and $m \geq \frac{n}{2}$, then $\Pat(s^{(m)}, \Sigma_{\sigma}, n) = \pi$.  Likewise, if $\pi_n \neq 1$ and $m \geq \frac{n}{2}$, then $\Pat(t^{(m)}, \Sigma_{\sigma}, n) = \pi$.  \end{lem}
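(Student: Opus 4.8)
The statement is a direct consequence of the three preceding lemmas, and the plan is simply to assemble them. I will prove the claim for $s^{(m)}$; the claim for $t^{(m)}$ is symmetric. Fix $m \ge \tfrac n2$ and write $s = s^{(m)} = \zeta p^{2m}\wmin$. Recall that $\zeta = z_1 z_2 \dots z_{n-1}$ has length exactly $n-1$ by Definition~\ref{def:segmentation}, so $s$ has the required form $s = \zeta\, s_{[n,\infty)}$ with $s_{[n,\infty)} = p^{2m}\wmin$; in particular $s_{[x,\infty)} = p^{2m+1}\wmin$ since $n - x = |p|$.

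First, by Lemma~\ref{patdefined} (which applies because $\pi_n \neq n$, so $x$ and $p$ are defined, and $m \ge \tfrac n2$), the pattern $\Pat(s, \Sigma_\sigma, n)$ is defined. Next, Lemma~\ref{nothingbetween} gives that $s_{[n,\infty)} \less s_{[x,\infty)}$ and that there is no index $1 \le c \le n$ with $s_{[n,\infty)} \less s_{[c,\infty)} \less s_{[x,\infty)}$. These are precisely the hypotheses of the first half of Lemma~\ref{lem:patternsij} applied to $w = s$ (with $w_{[x,\infty)} \gess w_{[n,\infty)}$ being the same inequality as $s_{[n,\infty)} \less s_{[x,\infty)}$). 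Therefore Lemma~\ref{lem:patternsij} yields $\Pat(s, \Sigma_\sigma, n) = \pi$, as desired.

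For $t^{(m)} = \zeta q^{2m}\wmax$, the argument is identical with the roles of $x$, $p$, $\wmin$ replaced by $y$, $q$, $\wmax$: Lemma~\ref{patdefined} gives that $\Pat(t^{(m)}, \Sigma_\sigma, n)$ is defined (using $\pi_n \neq 1$ and $m \ge \tfrac n2$), Lemma~\ref{nothingbetween} gives $t_{[y,\infty)} \less t_{[n,\infty)}$ with no $c$ strictly in between, and the second half of Lemma~\ref{lem:patternsij} then gives $\Pat(t^{(m)}, \Sigma_\sigma, n) = \pi$.

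There is no real obstacle here beyond bookkeeping: the only point requiring a moment's care is checking that $s^{(m)}$ (resp.\ $t^{(m)}$) genuinely has $\zeta$ as its length-$(n-1)$ prefix so that the "$w = \zeta w_{[n,\infty)}$" hypothesis of Lemmas~\ref{nothingbetween} and~\ref{lem:patternsij} is met, and that the validity of the $\sigma$-segmentation defining $\zeta$ — which is built into the definition of $s^{(m)}$ and $t^{(m)}$ — is exactly what powers the appeal to Lemma~\ref{lem:doubleback} inside those lemmas. All of the substantive work has already been done in Lemmas~\ref{patdefined}, \ref{nothingbetween}, and~\ref{lem:patternsij}.
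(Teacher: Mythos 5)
Your proposal is correct and follows exactly the paper's own proof: invoke Lemma~\ref{patdefined} to see that the pattern is defined, Lemma~\ref{nothingbetween} to verify the hypotheses of Lemma~\ref{lem:patternsij}, and conclude. The extra bookkeeping you include (checking that $\zeta$ is the length-$(n-1)$ prefix and that $s_{[x,\infty)} = p^{2m+1}\wmin$) is accurate and harmless.
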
 

\begin{proof} By Lemma~\ref{patdefined}, we have that $\Pat(s^{(m)}, \Sigma_{\sigma}, n)$ is defined. Moreover, using Lemma~\ref{nothingbetween} and Lemma~\ref{lem:patternsij}, we conclude that $s^{(m)}$ induces $\pi$.  The statement for $t^{(m)}$ can be proved similarly. 
\end{proof}

\begin{proof}[Proof of Theorem~\ref{characterization}] We first showed in Lemma~\ref{lem:cancomplete} that if there is a word $w \in \Wk$ such that $\Pat(w, \Sigma_{\sigma}, n) = \pi$, then $\hat{\pi}^{\star}$ has a valid $\sigma$-segmentation.  On the other hand, given a valid $\sigma$-segmentation of $\hat{\pi}^{\star}$, we produced words $s^{(m)}$ or $t^{(m)}$ (at least one of which always exists) and showed in Lemma~\ref{inducepi} that they induce $\pi$.  
\end{proof}

\begin{cor}[\cite{KAchar}]  \label{cor:containment} If $\sigma$ contains $\tau$ as a (not necessarily consecutive) subsequence, then  
$$\Al(\Sigma_{\tau}) \subseteq \Al(\Sigma_{\sigma}).$$
\end{cor}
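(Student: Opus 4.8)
The plan is to give a self-contained argument at the level of words, avoiding $\sigma$-segmentations altogether. Write $\ell=|\tau|$ and $k=|\sigma|$, and fix a strictly increasing map $\iota\colon\{0,1,\dots,\ell-1\}\to\{0,1,\dots,k-1\}$ witnessing that $\tau$ is a subsequence of $\sigma$, that is, $\sigma_{\iota(s)}=\tau_s$ for all $s$. Let $\hat\iota\colon\mathcal{W}_\ell\to\mathcal{W}_k$ be the letterwise relabeling defined by $\hat\iota(w)_i=\iota(w_i)$, and let $<_\tau$ and $<_\sigma$ denote the order $\less$ of Definition~\ref{def:less} for the signatures $\tau$ and $\sigma$, respectively.

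The first step is to check that $\hat\iota$ is an order-embedding from $(\mathcal{W}_\ell,<_\tau)$ into $(\mathcal{W}_k,<_\sigma)$. This is immediate from the recursive shape of Definition~\ref{def:less}: since $\iota$ is strictly increasing it preserves the relation $v_1<w_1$ on first letters, and since $\sigma_{\iota(s)}=\tau_s$ a letter $a$ lies in $T_\tau^+$ if and only if $\iota(a)$ lies in $T_\sigma^+$, so the three clauses defining $\less$ transfer verbatim under $\hat\iota$. Concretely, if $v\neq w$ and $m$ is the least index with $v_m\neq w_m$, then (as recorded by the displayed implication following Definition~\ref{def:less}) $v<_\tau w$ iff either $v_m<w_m$ and $\|v_{[1,m-1]}\|$ is even, or $v_m>w_m$ and $\|v_{[1,m-1]}\|$ is odd; applying $\hat\iota$ changes neither the least differing index, nor the parity of $\|v_{[1,m-1]}\|$, nor the outcome of comparing $v_m$ with $w_m$, so $v<_\tau w \iff \hat\iota(v)<_\sigma\hat\iota(w)$. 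In particular $\hat\iota$ is injective and reflects the order.

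The second step is the trivial identity $\Sigma_\sigma\circ\hat\iota=\hat\iota\circ\Sigma_\tau$ (both sides simply delete the first letter), so that $\Sigma_\sigma^{\,i}(\hat\iota(w))=\hat\iota(\Sigma_\tau^{\,i}(w))$ for all $i\ge 0$. Now let $\pi\in\Al_n(\Sigma_\tau)$ and choose $w\in\mathcal{W}_\ell$ with $\Pat(w,\Sigma_\tau,n)=\pi$; by definition the words $\Sigma_\tau^{\,0}(w),\dots,\Sigma_\tau^{\,n-1}(w)$ are distinct and their relative $<_\tau$-order is $\pi$. By steps one and two, the words $\Sigma_\sigma^{\,i}(\hat\iota(w))=\hat\iota(\Sigma_\tau^{\,i}(w))$, $0\le i\le n-1$, are then distinct with the same relative $<_\sigma$-order, so $\Pat(\hat\iota(w),\Sigma_\sigma,n)=\pi$ and hence $\pi\in\Al_n(\Sigma_\sigma)$. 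Taking the union over $n$ gives $\Al(\Sigma_\tau)\subseteq\Al(\Sigma_\sigma)$.

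I do not expect a genuine obstacle on this route; the only delicate point is making the recursive comparison rigorous in step one, which the ``least differing index plus parity'' reformulation reduces to a one-line check. For comparison, one can instead deduce the corollary from Theorem~\ref{characterization} by starting from a valid $\tau$-segmentation of $\hat\pi^\star$ and inserting empty blocks at the positions of $\sigma$ outside the image of $\iota$; the associated word only changes by relabeling its letters through $\iota$, so validity in the sense of Definition~\ref{def:invalid} is preserved, and condition~(a) of Definition~\ref{def:segmentation} is clear. There the awkward part would be verifying conditions~(b)--(f), especially deciding where to place the empty block and the entry $\star$ when the inserted sign sits at an end of $\sigma$ and conditions~(c), (d) or~(e) are active; this is why I would present the word-relabeling argument as the main proof.
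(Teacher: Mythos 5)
Your proof is correct, and it takes a genuinely different route from the paper's. The paper deduces the corollary from Theorem~\ref{characterization}: it starts with a valid $\tau$-segmentation of $\hat{\pi}^{\star}$, inserts empty blocks at the positions of $\sigma$ outside the chosen subsequence, verifies condition~(a) of Definition~\ref{def:segmentation} explicitly, asserts that conditions~(b)--(f) follow by ``similar arguments,'' and notes that validity is preserved because the relabeling does not change $\|\cdot\|$. You instead work directly at the level of words: the letterwise relabeling $\hat\iota$ induced by the strictly increasing witness $\iota$ is an order-embedding of $(\mathcal{W}_\ell,<_\tau)$ into $(\mathcal{W}_k,<_\sigma)$ (your reduction to the least differing index plus the parity of $\|v_{[1,m-1]}\|$ is exactly the displayed consequence of Definition~\ref{def:less} in the paper, and all three ingredients --- least differing index, comparison of the differing letters, and the parity --- are manifestly preserved by $\hat\iota$), and $\hat\iota$ intertwines the two shifts, so any word realizing $\pi$ for $\Sigma_\tau$ pushes forward to one realizing $\pi$ for $\Sigma_\sigma$. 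Your argument is more elementary and self-contained: it needs neither Theorem~\ref{characterization} nor any of the segmentation bookkeeping, and it cleanly sidesteps the boundary conditions~(b)--(f) that the paper's proof leaves to the reader (and that, as you note, are the genuinely fiddly part of the segmentation route). What the paper's approach buys in exchange is an explicit description of how a valid $\tau$-segmentation transforms into a valid $\sigma$-segmentation, which keeps the statement inside the combinatorial framework used for the enumeration results later in the paper; but as a proof of the containment itself, your version is the tighter one.
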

\begin{proof} Write $\tau = \tau_0 \tau_1 \ldots \tau_{k-1}$ and $\sigma = \sigma_0 \sigma_1 \ldots \sigma_{k'-1}$.  
Let $(e_0, e_1, \ldots, e_k)$ be a valid $\tau$-segmentation of $\hat{\pi}^{\star}$, which exists by Theorem~\ref{characterization}. Let $0=  i_0 < i_1 < \ldots i_{k-1} < i_k = k'$ be a choice of indices $i_j$ such that $\sigma_{i_j} = \tau_j$ for $0 \leq j < k$.   Define the indices $(e'_0, e'_1, \ldots, e'_{k'})$ by taking $e'_0 = 0$ and $e'_t = e_{j}$ if $i_{j-1} < t \leq i_{j}$ for $1 \leq t \leq k'$.  To see why condition~(a) of Definition~\ref{def:segmentation} holds, notice that the sequence $\hat{\pi}^{\star}_{e'_t + 1} \hat{\pi}^{\star}_{e'_t+2}\ldots \hat{\pi}^{\star}_{e'_{t+1}}$ is empty whenever $t \neq i_j$ for some $0 \leq j < k$. In the case that $t = i_j$, we have $e'_t= e_j$ and $e'_{t+1} = e_{j+1}$. It follows that $\hat{\pi}^{\star}_{e'_{t} + 1} \hat{\pi}^{\star}_{e'_{t} +2} \ldots \hat{\pi}^{\star}_{e'_{t+1}} = \hat{\pi}^{\star}_{e_j + 1} \hat{\pi}^{\star}_{e_{j}} \ldots \hat{\pi}^{\star}_{e_{j+1}}$ is increasing if $\sigma_{i_j} = \tau_j = +$, and it is decreasing if $\sigma_{i_j} = \tau_j = -$.     The remaining conditions of Definition~\ref{def:segmentation} can be proved with similar arguments.    Moreover, the $\sigma$-segmentation is valid since we reassigned letters such that $z_t = j$ if and only if $z'_t = i_j$, and so $\|z_t\| = \|z'_t\|$ for all $1 \leq t < n$. Therefore, $(e_0', e_1', \ldots, e_k')$ is a valid $\sigma$-segmentation of $\hat{\pi}^{\star}$ and by Theorem~\ref{characterization}, we have $\pi \in \Al(\Sigma_{\sigma})$.
\end{proof}

\begin{example} Let $\tau = ++$ and $\sigma = {+}{-}{+}{+}$ be signatures of signed shifts.  Take $\pi = 3741526 \in \Al(\Sigma_{\tau})$, and so $\hat{\pi}^{\star} = 56712{\star}4$. We take the $\tau$-segmentation of $\hat{\pi}^{\star}$ given by $E = (0, 3, 7)$, and so $\hat{\pi}^{\star}_{E} = |567|12{\star}4|$,  defining $\zeta_{\tau} = 011010$.   Removing $\sigma_1$ and $\sigma_3$ from $\sigma$ leaves $\tau$, so we can take $E'= (0, 3, 3, 7, 7)$ as our $\sigma$-segmentation, and so $\hat{\pi}^{\star}_{E'} = |567\|12{\star}4\|$. 
This valid segmentation defines the prefix $\zeta_{\sigma} = 022020$, and we conclude that $\pi \in \Al(\Sigma_{\sigma})$.  
\end{example}

\section{Allowed Intervals}\label{sec:intervals}

Fix a signed shift $\Sigma_{\sigma}$ with signature $\sigma = \sigma_0 \sigma_1 \ldots \sigma_{k-1}$, and fix $n$.
In this section, for each $\pi \in \Al_n(\Sigma_{\sigma})$, we give a complete description of the set of words $w \in \Wk$ inducing $\pi$. We denote this set by
$$\Int(\pi, \Sigma_{\sigma}) = \{ w \in \Wk : \Pat(w, \Sigma_{\sigma}, n) = \pi \}.$$
In Theorem~\ref{thm:intervals}, we show that $\Int(\pi, \Sigma_{\sigma})$ is a finite union of disjoint intervals. An open interval in $\Wk$ is defined as a set of the form $\{w \in \Wk : u \less w \less v\}$ for some $u, v \in \Wk$, and denoted by $(u, v)_{\less}$. Intervals $[u, v)_{\less}$, $(u, v]_{\less}$ and $[u, v]_{\less}$ are defined similarly.  
We then use these intervals to give an upper bound on the number of allowed patterns of $\Sigma_\sigma$, which later in Section~\ref{sec:entropy} will be used to calculate the topological entropy of this map.  The following technical lemma will be useful later in this section.

\begin{lem} \label{dinfinity} Let $d$ be a finite word, and $v$ an infinite word. The following are equivalent:
\begin{center}
(a) $v \gess d^\infty$; \qquad (b) $v \gess d^m v$ for all $m \geq 1$; \qquad (c)  $v \gess d v$.
\end{center}
Likewise, if we replace $\gess$ with $\less$ in (a),(b),(c), the resulting three statements are equivalent to each other. \end{lem}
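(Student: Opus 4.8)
The plan is to prove the cyclic chain of implications (a) $\Rightarrow$ (b) $\Rightarrow$ (c) $\Rightarrow$ (a) for the $\gess$ version; the $\less$ version is entirely symmetric (replace every $\gess$ by $\less$ and every "larger" by "smaller" throughout, or formally apply the $\gess$-case to the reverse order, though stating it directly is cleaner). The implication (b) $\Rightarrow$ (c) is trivial, being the instance $m=1$. So the substance is (a) $\Rightarrow$ (b) and (c) $\Rightarrow$ (a).

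For (a) $\Rightarrow$ (b): assume $v \gess d^\infty$ and fix $m \ge 1$. The key observation is that $d^m v$ and $d^\infty$ share the common prefix $d^m$ (of length $m|d|$). By the dichotomy following Definition~\ref{def:less} — quoted in the excerpt as the displayed implication about $w \less v$ with $w_{[1,j]} = v_{[1,j]}$ — comparing $d^m v$ with $d^\infty$ reduces, after cancelling the shared prefix $d^m$, to comparing $v$ with $d^\infty$, with the direction of the inequality flipped if and only if $\|d^m\|$ is odd. Since $v \gess d^\infty$ by hypothesis, we get $d^m v \gess d^\infty$ when $\|d^m\|$ is even, and $d^m v \lessalt d^\infty$... — wait, more carefully: $d^m v$ and $d^\infty$ agree on $d^m$, so $d^m v \gess d^\infty \iff v \gess d^\infty$ (if $\|d^m\|$ even) or $\iff v \less d^\infty$ (if $\|d^m\|$ odd). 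In the even case we immediately get $d^m v \gess d^\infty$; but then I still need $v \gess d^m v$, not $v \gess d^\infty$. So the cleaner route is: compare $v$ with $d^m v$ directly. These need not share a long prefix, so instead I argue by transitivity through $d^\infty$: I will show $v \gess d^\infty \gess... $ no. The honest approach: show $d^m v$ and $d^\infty$ compare the same way (via the cancellation argument above, tracking parity of $\|d^m\|$), and separately show $v$ and $d^\infty$ compare a fixed way; then combine. Actually the slickest is: note $v \gess d^\infty$ means $v \ne d^\infty$ and $v, d^\infty$ first differ at some position, giving $v \gess d^j d^\infty$-tail arguments; equivalently $d^m v \gess d^m d^\infty = d^\infty$ exactly when $\|d^m\|$ is even. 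I will handle the parity bookkeeping by splitting into $\|d\|$ even (then $\|d^m\|$ always even, and the argument is direct) and $\|d\|$ odd (then alternate, and one must compare $v$ against $d^\infty$ and $dv$ against $d^\infty$ — note $\|d\|$ odd forces $d^\infty$ to have period $2|d|$, i.e. $d^\infty = (dd)^\infty$ with $\|dd\|$ even). The cleanest formulation: since the shared prefix of $d^mv$ with $d^{m+1}v$ is $d^m$, we have $d^m v \gess d^{m+1}v \iff v \gess d v$ if $\|d^m\|$ even, reversed if odd; chaining $m = 0, 1, 2, \ldots$ and using (a) to anchor $v \gess d^\infty$ which gives $v \gess dv$ and $v \gess d^2 v \gess \cdots$ — this is really the same content as (c), so I expect to prove (a) $\Rightarrow$ (c) and (c) $\Rightarrow$ (b) instead, reorganizing the cycle as (c) $\Leftrightarrow$ (a) and (c) $\Rightarrow$ (b) $\Rightarrow$ (c).

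So the reorganized plan: first prove (a) $\Leftrightarrow$ (c), then (c) $\Rightarrow$ (b) (with (b) $\Rightarrow$ (c) trivial). For (c) $\Rightarrow$ (a): suppose $v \gess dv$. Then $v \ne dv$, so $v$ does not begin with $d$, OR $v$ begins with $d$ but $v_{[|d|+1,\infty)}$ compares to $v$ appropriately. Iterating: if $v$ begins with $d$, write $v = dv'$; from $v \gess dv$, i.e. $dv' \gess ddv'$, cancel $d$ (flipping if $\|d\|$ odd) to get $v' \gess dv'$ or $v' \less dv'$. In the $\|d\|$ even case this is a clean descent: $v \gess dv \Rightarrow$ (if $v$ starts with $d$) $v' = \Sigma^{|d|}(v) \gess dv'$, and since at each stage the word gets "strictly smaller" in a sense that cannot continue forever unless $v = d^\infty$ — but $d^\infty$ fails $v \gess dv$, contradiction, so eventually $v$ fails to begin with $d$, at which point the first point of difference between $v$ and $d^\infty$ shows $v \gess d^\infty$ (using $v \gess dv$ to fix the sign). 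The $\|d\|$ odd case is the genuine obstacle: cancelling $d$ flips the inequality, so one gets an alternating chain $v \gess dv$, $v' \less dv'$, $v'' \gess dv''$, etc., and the monotone-descent intuition breaks. I will handle it by passing to $D := dd$, for which $\|D\|$ is even and $D^\infty = d^\infty$, and observing that (c) for $d$ implies (c) for $D$: from $v \gess dv$ and the cancellation rule, $v \gess dv \gess d(dv) = D v$ requires checking $dv \gess ddv$, i.e. cancelling one $d$ from $v \gess dv$ flips to $v' \less dv'$ where $v = dv'$ ... hmm, this needs $v$ to begin with $d$. If $v$ does not begin with $d$, then $v$ and $d^\infty$ first differ within the first $|d|$ letters and $v \gess dv$ pins down $v \gess d^\infty$ directly — done. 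If $v$ does begin with $d$, reduce to $v'$ and recurse, but now with the flipped inequality; combining two steps returns to the original direction, which is exactly the statement for $D$. So: either $v$ eventually stops beginning with copies of $d$ (and we read off $v \gess d^\infty$ from the first discrepancy, with the sign supplied by the accumulated (c)-type inequality at that stage), or $v = d^\infty$, contradicting (c). This case analysis — $\|d\|$ even vs. odd, and $v$ beginning with $d$ or not — is where the real work lies, and the parity-flip in the odd case is the main obstacle; the device of replacing $d$ by $dd$ to restore even parity, together with a "first point of discrepancy" argument anchored by (c), is how I expect to push it through. Finally (c) $\Rightarrow$ (b) follows by the same discrepancy/cancellation bookkeeping: from $v \gess dv$ one shows inductively $v \gess d^m v$, since $v$ and $d^m v$ first differ either within $d^m$ — impossible, they'd agree with $d^\infty$ there and the sign... — or beyond it, where the tail comparison is governed by $v$ versus $v$ shifted, controlled by iterating (c).
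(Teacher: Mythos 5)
Your overall architecture matches the paper's: (b) $\Rightarrow$ (c) is trivial, the substance lies in connecting (a), (c) and (b), the key tool is the cancellation rule (the comparison flips according to the parity of $\|\cdot\|$ of a cancelled common prefix), and the endgame in every hard case is ``either the recursion terminates at a first discrepancy, which pins the comparison against $d^\infty$, or it forces $v = d^\infty$, contradicting the hypothesis.'' Your device of replacing $d$ by $D = dd$ when $\|d\|$ is odd is exactly the paper's move of first establishing $v \gess d^2 v$ (via the sandwich $dv \less v \lesseq d^2v \Rightarrow v = d^\infty$) and then chaining $v \gess d^2v \gess d^4v \gess \cdots$. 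Your (c) $\Rightarrow$ (a) argument --- if $v$ does not begin with $d$, read off the sign at the first discrepancy, since there $dv$ agrees with $d^\infty$; otherwise cancel a $d$, flip if $\|d\|$ is odd, recurse, and propagate the sign back up --- is correct and is a legitimate, slightly more hands-on alternative to the paper's chain argument.

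The genuine gap is the passage to (b), which you dispatch in one trailing sentence that, as written, is wrong: the case where $v$ and $d^m v$ first differ within the initial $m|d|$ letters is not ``impossible'' --- it is precisely the favorable case, because there $d^m v$ agrees with $d^\infty$, so the comparison of $v$ with $d^m v$ coincides with that of $v$ with $d^\infty$, which (a) controls. The case that actually requires work is the complementary one, where $v$ begins with $d^m$, and your sketch does not address it. Note also that the transitivity shortcut $v \gess d^\infty \gess d^m v$ is available only when $\|d^m\|$ is odd (prepending $d^m$ to $v \gess d^\infty$ flips the inequality only then); when $\|d^m\|$ is even, $v$ and $d^m v$ lie on the same side of $d^\infty$ and one must argue by contradiction, as the paper does: assuming $v \lesseq d^m v$ yields either the chain $v \lesseq d^m v \lesseq d^{2m} v \lesseq \cdots$ contradicting (a) (when $\|d\|$ is even), or the sandwich $d^{2i-1}v \less v \lesseq d^{2i}v$ forcing $v = d^\infty$ (when $\|d\|$ is odd and $m = 2i$). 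Your recursion machinery can be adapted to cover this, but the proposal as written does not do so.
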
 
\begin{proof} We will prove that (a) implies (b), and that (c) implies (a). The fact that (b) implies (c) is trivial. The proof of the corresponding statements for $\less$ is analogous.

To show that (a) implies (b), first suppose that $\|d\|$ is odd.  For all $i\ge1$, (a) implies that $v\gess d^{\infty} \gess d^{2i-1}v$, proving (b) when $m$ is odd.  Now suppose that we had $v\lesseq d^{2i} v$ for some $i$.  Then $d^{2i-1}v \less v \lesseq d^{2i} v$, which forces $v = d^\infty$, causing a contradiction. This proves (b) for even $m$. Now consider the case when $\|d\|$ is even. Suppose for contradiction that $v\lesseq d^m v $ for some $m \geq 1$.  Then $v\lesseq d^m v\lesseq d^{2m} v \lesseq \dots \lesseq d^{jm}v$ for all $j \geq 1$, contradicting (a). 

To prove that (c) implies (a), first consider the case when $\|d\|$ is even.   Since $v \gess d v$ we have that $$v \gess d v \gess d^2 v \gess \ldots \gess d^j v$$ for all $j \geq 1$, and so $v \gess d^\infty$.  
Next, we consider the case when $\|d\|$ is odd.  Suppose we had $v \lesseq d^2 v$.  Then
$dv \less v \lesseq d^2 v$,
which forces $v = d^\infty$, causing a contradiction.  Therefore, $v \gess d^2 v$, and we obtain $$v \gess d^2 v \gess d^4 v \gess \ldots \gess d^{2j} v$$ for all $j\ge 1$.   We conclude that (a) holds in all cases. 
\end{proof}

\begin{lem} \label{lem:indineq} Let $\pi \in \Al_n(\Sigma_{\sigma})$ and let $\zeta$ be a prefix defined by a valid $\sigma$-segmentation of~$\hat{\pi}^{\star}$. 
\begin{enumerate}[(a)]
\item If $\pi_n \neq n$ (so that $x$ and $p$ are defined), then $\pi_i \geq \pi_x$ implies that $(z_{[i, n-1]})^\infty \gesseq p^\infty$.  \item If $\pi_n \neq 1$ (so that $y$ and $q$ are defined), then $\pi_i \leq \pi_y$ implies that $(z_{[i, n-1]})^\infty \lesseq q^\infty$.  \end{enumerate}
\end{lem}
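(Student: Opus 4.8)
The plan is to prove part (a) by strong induction on the quantity $n - i$, i.e. on how far the index $i$ sits from the end of $\pi$; part (b) will then follow by the reverse-complement symmetry discussed after Example~\ref{ex:sym}, since that symmetry swaps the roles of $p$ and $q$, of $x$ and $y$, and reverses all inequalities. So I would state the symmetry reduction first and then concentrate entirely on (a).

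For the base case, when $i = x$ we have $z_{[i,n-1]} = z_{[x,n-1]} = p$ by Definition~\ref{def:pq}, so $(z_{[i,n-1]})^\infty = p^\infty$ and equality holds trivially. More generally, I would first dispense with the case $\pi_i = \pi_x$ (only $i = x$), and then assume $\pi_i > \pi_x$, i.e. $\pi_i > \pi_n + 1$, with $i \ne n$. The idea is to compare $z_i$ with $z_x$ using Lemma~\ref{flipping}: since $\pi_i > \pi_x$, either $z_i > z_x$, in which case $(z_{[i,n-1]})^\infty \gess p^\infty$ immediately from the first letter, or $z_i = z_x$, and then Lemma~\ref{flipping}(b) tells us that $\pi_{i+1} - \pi_{x+1}$ has the same sign as $\sigma_{z_i}$. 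Recall $\pi_{x+1} = \pi_n$ (since $\pi_x = \pi_n + 1$, so $x$ is the position such that the entry after it in one-line notation is $\pi_n$; more precisely $\hat\pi^\star_{\pi_x} = \pi_n$ forces $\pi_{x+1} = \pi_n$). Hence when $\sigma_{z_i} = +$ we get $\pi_{i+1} > \pi_n = \pi_{x+1}$, i.e. $\pi_{i+1} \ge \pi_x$ (using that $\pi_{i+1} \ne \pi_n$, as $i+1 \ne x+1$ when $i \ne x$... careful, $i \ne x$ gives $i+1 \ne x+1$); and when $\sigma_{z_i} = -$ we get $\pi_{i+1} < \pi_n$, so $\pi_{i+1} \le \pi_y$.

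Now I would run the induction. In the case $z_i = z_x$ and $\sigma_{z_i} = +$: the induction hypothesis applied to $i+1$ (which is legitimate since $n - (i+1) < n - i$, provided $i+1 \le n$; the case $i+1 = n$ needs $\pi_n \ge \pi_x$, impossible, so this does not arise, or rather $\pi_{i+1} = \pi_n$ is excluded) gives $(z_{[i+1,n-1]})^\infty \gesseq (z_{[x+1,n-1]})^\infty$, and since $\sigma_{z_i} = +$ means the $\less$-comparison of $z_i z_{[i+1,n-1]} z_i z_{[i+1,n-1]}\cdots$ against $z_x z_{[x+1,n-1]}\cdots$ after canceling the equal first letter $z_i = z_x$ preserves the direction of the inequality (an even number — zero — of negative signs canceled), we conclude $(z_{[i,n-1]})^\infty \gesseq (z_{[x,n-1]})^\infty = p^\infty$. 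In the case $\sigma_{z_i} = -$: then $\pi_{i+1} \le \pi_y$, so I would need part (b) for the index $i+1$ — i.e. I am really proving (a) and (b) simultaneously by a single induction on $n-i$ — yielding $(z_{[i+1,n-1]})^\infty \lesseq q^\infty$. Combining: $z_{[i,n-1]}^\infty$ starts with $z_i = z_x$, then continues with something $\lesseq q^\infty$; comparing with $p^\infty = z_x z_{[x+1,n-1]}^\infty$, after canceling the common letter $z_x \in T_\sigma^-$ the inequality flips, so from $z_{[i+1,n-1]}^\infty \lesseq q^\infty$ and knowing how $p^\infty$'s tail relates to $q^\infty$...

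The main obstacle is exactly this last coupling: to close the induction I need to know the relationship between the tail of $p^\infty$ (namely $z_{[x+1,n-1]}^\infty$, or after wrapping around, $p^\infty$ shifted) and $q^\infty$, and in particular I expect to need a statement like "$(z_{[x+1,n-1]})^\infty$ compared to $p^\infty$" together with how $q$ enters — this is where the precise structure of $\zeta$ near its end, and the hypothesis that the $\sigma$-segmentation is \emph{valid} (ruling out $p = q^2$ and $q = p^2$), will be used, presumably via Lemmas~\ref{prefixlemsd2} and~\ref{lem:doubleback}. I would organize it so that (a) and (b) are the two halves of one simultaneous induction, with the $\sigma_{z_i} = +$ branch of (a) feeding on (a) and the $\sigma_{z_i} = -$ branch feeding on (b), and symmetrically for (b); the validity hypothesis guarantees the induction never degenerates into forcing $(z_{[i,n-1]})^\infty = p^\infty$ with $i \ne x$ in a way that would contradict $\pi_i \ne \pi_x$. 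I expect the bookkeeping of "$\ge$ versus $>$" (weak versus strict) to be the fiddly part, handled by noting that strictness at the first place the sequences differ propagates, and equality of all of $z_{[i,n-1]}^\infty$ with $p^\infty$ forces $z_{[i,n-1]}$ to be a cyclic-shift-invariant power related to $p$, which by primitivity (Lemma~\ref{prefixlemsd2}) pins down $i = x$.
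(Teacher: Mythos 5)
There is a genuine gap here, and in fact more than one. First, your induction is set up on the wrong objects: the statement is about the \emph{periodic} words $(z_{[i,n-1]})^\infty$, and removing the first letter of $(z_{[i,n-1]})^\infty$ yields $z_{[i+1,n-1]}\,(z_{[i,n-1]})^\infty$, which is \emph{not} $(z_{[i+1,n-1]})^\infty$; likewise the tail of $p^\infty$ after one letter is $z_{[x+1,n-1]}\,p^\infty$, not $(z_{[x+1,n-1]})^\infty$. So even in your $\sigma_{z_i}=+$ branch, the inductive hypothesis $(z_{[i+1,n-1]})^\infty \gesseq (z_{[x+1,n-1]})^\infty$ does not let you conclude anything about $(z_{[i,n-1]})^\infty$ versus $p^\infty$ after cancelling the common first letter. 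Second, your base comparison rests on the claim $\pi_{x+1}=\pi_n$, which is false: $\pi_x=\pi_n+1$ is a statement about values, and $\hat{\pi}^{\star}_{\pi_x}=\pi_{x+1}$ need not equal $\pi_n$ (indeed it cannot unless $x+1=n$). Third, you yourself flag that the $\sigma_{z_i}=-$ branch does not close; that coupling between the tails of $p^\infty$ and $q^\infty$ is not something the validity hypothesis hands you directly.

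The paper's route avoids all of this by exploiting machinery already in place: since $s^{(m)}=\zeta p^{2m}\wmin$ induces $\pi$ (Lemmas~\ref{patdefined}, \ref{nothingbetween} and~\ref{lem:patternsij}), the hypothesis $\pi_i\ge\pi_x$ immediately gives $s^{(m)}_{[i,\infty)}\gesseq s^{(m)}_{[x,\infty)}$ for all $m\ge n/2$; letting $m$ grow yields $z_{[i,n-1]}p^\infty\gesseq p^\infty$, and then Lemma~\ref{dinfinity} (the equivalence of $v\gess dv$, $v\gess d^m v$, and $v\gess d^\infty$) is exactly the device that converts this single-step inequality into the desired statement $(z_{[i,n-1]})^\infty\gesseq p^\infty$. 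If you want to salvage a direct combinatorial argument, Lemma~\ref{dinfinity} is the missing tool: aim to prove $z_{[i,n-1]}\,p^\infty\gesseq p^\infty$ rather than inducting on the periodic words themselves.
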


\begin{proof} We will prove (a); the proof of (b) is analogous.   Suppose that $\pi_n \neq n$ and $\pi_i \geq \pi_x$.  By Lemma~\ref{lem:patternsij},  $s^{(m)}$ induces $\pi$, and thus $s^{(m)}_{[i, \infty)} \gesseq s^{(m)}_{[x, \infty)}$ for $m \geq \frac{n}{2}$.   We claim that $z_{[i, n-1]} p^\infty \gesseq p^\infty$.  This is because if we had $z_{[i, n-1]} p^\infty \less p^\infty$, 
there would exist $m \geq \frac{n}{2}$ such that $z_{[i, n-1]} p^{2m} \less (p^\infty)_{[1, j-1]}$, where $j = n - i +2 m(n-x)$.  However, this is a contradiction to 
$s^{(m)}_{[i, \infty)} =z_{[i, n-1]} p^{2m} s_{[n-i + 2m(n-x), \infty)} \gesseq p^{2m + 1} s_{[(2m+1)(n-x), \infty)} = s^{(m)}_{[x, \infty)}$. Applying Lemma~\ref{dinfinity} to $z_{[i, n-1]} p^\infty \gesseq p^\infty$ gives $(z_{[i, n-1]})^\infty \gesseq p^\infty$.\end{proof}

\begin{lem} \label{lem:nocbtwn} 
Let $\zeta$ be the prefix defined by a valid $\sigma$-segmentation of $\hat{\pi}^{\star}$.  Consider three cases depending on the value of $\pi_n$:
\begin{enumerate}[(a)]
 \item If $\pi_n \notin \{1, n\}$ and $q^\infty \less w \less p^\infty$, then there is no $1 \leq c < n$ such that $qw \less z_{[c, n-1]} w \less pw$. 
\item If $\pi_n = 1$ and $ w \less p^\infty$, then there is no $1 \leq c < n$ such that $w \less z_{[c, n-1]} w\less p w$.
\item If $\pi_n = n$ and $q^\infty \less w $, then there is no $1 \leq c < n$ such that $q w \less z_{[c, n-1]} w \less w$.  \end{enumerate} \end{lem}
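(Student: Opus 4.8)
The plan is to argue by contradiction, treating case~(a) first and deducing the other two at the end. Suppose $1\le c<n$ satisfies $qw\less z_{[c,n-1]}w\less pw$, and write $d=z_{[c,n-1]}$. Since $c\ne n$ we have $\pi_c\ne\pi_n$, and by the symmetry described after Example~\ref{ex:seg} (which swaps the roles of $p,x$ and $q,y$, and hence the cases $\pi_c>\pi_n$ and $\pi_c<\pi_n$) we may assume $\pi_c>\pi_n$, so that $\pi_c\ge\pi_x$. The goal is then to prove $dw\gesseq pw$, which contradicts $dw\less pw$. By Lemma~\ref{lem:indineq}(a), $\pi_c\ge\pi_x$ yields $d^\infty\gesseq p^\infty$, and I would split according to whether this is an equality.

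If $d^\infty=p^\infty$, then $d$ and $p$ are powers of a common primitive word $\delta$, and by Lemma~\ref{prefixlemsd2} one has $p\in\{\delta,\delta^2\}$; writing $d=\delta^t$, a short computation using Lemma~\ref{dinfinity} together with $w\less p^\infty$ --- distinguishing $\|\delta\|$ even, in which case the words $\delta^jw$ increase in $j$, from $\|\delta\|$ odd, in which case they alternate about $\delta^\infty$ --- gives $dw\gesseq pw$ in every instance except $d=p^t$ with $p$ primitive, $\|p\|$ odd, and $t\ge2$. In that last instance $\zeta$ ends with $pp$, so Lemma~\ref{lem:doubleback} forces $q=p^2$, contradicting validity of the $\sigma$-segmentation.

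If $d^\infty\gess p^\infty$, let $\ell$ be the first position at which $d^\infty$ and $p^\infty$ disagree. If $d$ and $p$ already disagree within their first $\min(|d|,|p|)$ letters, then $dw$ and $pw$ disagree at position $\ell$ in exactly the way $d^\infty$ and $p^\infty$ do, so $dw\gess pw$. Otherwise one of $d,p$ is a prefix of the other: if $|d|\ge|p|$ then $d$ begins with $p$, so $d=pe$ with $e=z_{[c+|p|,n-1]}$ nonempty; if $|d|<|p|$ then $c>x$ forces $d$ to be a suffix of $p$ as well, so $d$ is a border of $p$ and $p=df$ with $f=z_{[x+|d|,n-1]}$ nonempty. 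In either case, cancelling the common leading block from $dw$ and $pw$ reduces the comparison to one between $ew$ (respectively $fw$) and $w$. Then $|p|$ (respectively $|d|$) applications of Lemma~\ref{flipping} to the matching $z$-blocks, using $\pi_c>\pi_x$ and the parity of $\|p\|$ (respectively $\|d\|$), determine whether $\pi_{c^*}>\pi_n$ or $\pi_{c^*}<\pi_n$ for the relevant index $c^*$; Lemma~\ref{lem:indineq} then places $(z_{[c^*,n-1]})^\infty$ above $p^\infty$ or below $q^\infty$, and since $q^\infty\less w\less p^\infty$, Lemma~\ref{dinfinity} pins $ew$ (respectively $fw$) to the correct side of $w$. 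Tracing the sign flips introduced by the cancelled block, each branch gives $dw\gesseq pw$, the required contradiction, completing case~(a).

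For cases~(b) and~(c) the word $w$ is the smallest (respectively largest) among the shifts $z_{[1,n-1]}w,\dots,z_{[n-1,n-1]}w,w$, only $p$ (respectively $q$) occurs, and every $c\ne n$ has $\pi_c>\pi_n$ (respectively $\pi_c<\pi_n$); the argument above then applies with any branch producing $\pi_{c^*}<\pi_n$ (respectively $\pi_{c^*}>\pi_n$) simply impossible, and the exceptional sub-case $d=p^t$ (respectively $d=q^t$) with $t\ge2$ and $\|p\|$ (respectively $\|q\|$) odd is ruled out directly --- the chain $\pi_c,\pi_{c+|p|},\dots,\pi_x,\pi_n$ read off from $z_{[c,n-1]}=p^t$, together with $\pi_x=\pi_n+1$ and $\|p\|$ odd, forces $\pi_i=\pi_n$ for some $i\ne n$. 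I expect the main obstacle to be the bookkeeping in the case $d^\infty\gess p^\infty$: lining up the prefix/border overlap of $d$ and $p$ as suffixes of $\zeta$ with the order reversals in $\less$ controlled by $\|p\|$ and $\|d\|$, and checking that the residual words $e$ and $f$ really are of the form $z_{[c^*,n-1]}$, so that Lemmas~\ref{flipping} and~\ref{lem:indineq} can be invoked; the case $d^\infty=p^\infty$ is comparatively routine but needs the single appeal to Lemma~\ref{lem:doubleback} and validity.
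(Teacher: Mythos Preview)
Your argument is correct, but the paper takes a cleaner and more uniform route. The paper first isolates an intermediate claim: for any $i<n$, if $\pi_i>\pi_n$ then $z_{[i,n-1]}w\gess w$, and if $\pi_i<\pi_n$ then $z_{[i,n-1]}w\less w$ (this is exactly your use of Lemma~\ref{lem:indineq} plus Lemma~\ref{dinfinity}, just stated once up front). With that in hand, the paper proves $\pi_c>\pi_x\Rightarrow z_{[c,n-1]}w\gess pw$ by splitting only on whether $c>x$ or $c<x$: Lemma~\ref{flipping} compares the overlapping prefixes of $z_{[c,n-1]}$ and $p$ directly, and when they agree the comparison reduces to the intermediate claim after prepending the common block. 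This handles all of your sub-cases at once, including the situation $d^\infty=p^\infty$, without ever needing to analyze powers of a primitive $\delta$ or to invoke Lemma~\ref{lem:doubleback} and validity.

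Your decomposition by ``$d^\infty=p^\infty$ versus $d^\infty\gess p^\infty$'' works, but it forces you to do extra bookkeeping in the equality branch (the alternation of $\delta^jw$ about $\delta^\infty$ according to the parity of $\|\delta\|$, and the appeal to validity via Lemma~\ref{lem:doubleback} to kill $d=p^t$ with $t\ge2$ and $\|p\|$ odd). In the paper's approach that sub-case is absorbed silently: if, say, $c<x$ with $z_{[c,c+|p|-1]}=p$ and $\|p\|$ odd, then Lemma~\ref{flipping} yields $\pi_{c+|p|}<\pi_n$ and the intermediate claim finishes, with no separate word-combinatorial analysis. Your strict-inequality branch with the prefix/border overlap is essentially the paper's $c<x$/$c>x$ split rediscovered, so the two proofs converge there. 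What your approach buys is a more ``global'' viewpoint through Lemma~\ref{lem:indineq}; what the paper's buys is a single reduction target (the intermediate claim) that makes the induction on position uniform and avoids the extra case analysis.
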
  

\begin{proof} 
First we claim that if $q^\infty \less w \less p^\infty$ (resp. $w \less p^\infty$ if $\pi_n = 1$; and $q^\infty \less w$ if $\pi_n = n$), then
\begin{equation}\label{geqn} \pi_i > \pi_n \text{ implies } z_{[i, n-1]} w \gess w; \text{ and } 
 \pi_i < \pi_n \text{ implies } z_{[i, n-1]} w \less w. \end{equation}
Suppose that $\pi_i > \pi_n$.  By Lemma~\ref{lem:indineq}, we have $(z_{[i, n-1]})^\infty \gesseq p^\infty \gess w$.    Applying Lemma~\ref{dinfinity}, we obtain $z_{[i, n-1]} w \gess w$.  Similarly, Lemma~\ref{lem:indineq} applied to $\pi_i < \pi_n$ gives $(z_{[i, n-1]})^\infty \lesseq q^\infty \less w$, and Lemma~\ref{dinfinity}  implies that $z_{[i, n-1]} w \less w$.

In the case that $\pi_n \neq n$ (so that $x$ and $p$ are defined), we will show that if $w \less p^\infty$, then
\begin{equation} \pi_c  > \pi_x \text{ implies } z_{[c, n-1]} w \gess p w.
\end{equation}
Consider the cases $c > x$ and $c< x$ separately.  
\begin{itemize}
\item Case $c > x$.  Since $\pi_c > \pi_x$, Lemma~\ref{flipping} implies that $z_{[c, n-1]} \gesseq z_{[x, x + n - c -1]}$.  If this inequality is strict, it follows immediately that $z_{[c, n-1]} w \gess p w$.  It remains to consider the case when $z_{[c, n-1]} = z_{[x, x + n - c]}$. If $\|z_{[c, n-1]}\|$ is even (resp. odd), Lemma~\ref{flipping} applied $n-c$ times to $\pi_c > \pi_x$ implies that $\pi_n > \pi_{x + n - c}$ (resp. $\pi_n < \pi_{x + n - c}$).  By Equation~\eqref{geqn}, we have $w \gess z_{[x+n-c, n-1]} w$ (resp. $w \less z_{[x + n -c , n-1]}w$).  Prepending the equal prefixes $z_{[c, n-1]} = z_{[x, x + n - c-1]}$, which flips the inequality an even (resp. odd) number of times, we obtain $z_{[c, n-1]} w \gess p w$.

\item Case $c < x$.  Since $\pi_c > \pi_x$, Lemma~\ref{flipping} implies that $z_{[c, c + n - x - 1]} \gesseq z_{[x, n-1]}$.  If this inequality is strict, it follows immediately that $z_{[c, n-1]} w \gess pw$.  It remains to consider the case when $z_{[c, c + n - x - 1]} = z_{[x, n-1]}$.  
If $\|z_{[x, n-1]}\|$ is even (resp. odd),  Lemma~\ref{flipping} applied $n-x$ times to $\pi_c > \pi_x$ implies that $\pi_{c + n - x} > \pi_{n}$ (resp. $\pi_{c + n -x} < \pi_n$).  By Equation~\eqref{geqn}, we have $z_{[c + n - x, n-1]} w \gess w$ (resp. $z_{[c + n - x, n-1]} w \less w$).  Prepending the equal prefixes $z_{[c, c + n - x-1]} = z_{[x, n-1]}$, which flips the inequality an even (resp. odd) number of times, we obtain $z_{[c, n-1]} w \gess pw$.  
\end{itemize}

In the case that $\pi_n \neq 1$ (so that $y$ and $q$ are defined), an analogous argument shows that if $q^\infty \less w$, then $\pi_c < \pi_y$ implies $z_{[c, n-1]} w \less qw$. 
It follows that, when $\pi_n \notin \{1, n\}$, there is no $1 \leq c < n$ such that $qw \less z_{[c, n-1]} w \less pw$; and similarly for the other cases.  
\end{proof}

The following theorem describes the structure of the words in $\Int(\pi, \Sigma_{\sigma})$.

\begin{thm}\label{thm:intervals} We have $\Pat(w, \Sigma_{\sigma}, n) = \pi$ if and only if there exists a valid $\sigma$-segmentation of $\hat{\pi}^{\star}$ with associated prefix $\zeta=w_{[1, n-1]}$ such that the following condition (depending on $\pi_n$) is satisfied:
\begin{enumerate}[(a)]
\item  if $\pi_{n} \notin \{1, n\}$, then $q^\infty \less w_{[n, \infty)} \less p^\infty$;
\item if $\pi_n = 1$, then $\wmin \lesseq w_{[n, \infty)} \less p^\infty$; 
\item if $\pi_n = n$, then $q^\infty \less w_{[n, \infty)} \lesseq \wmax$.
\end{enumerate} 
\end{thm}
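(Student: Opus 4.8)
The plan is to prove both directions by combining the structural results already established about prefixes with the order-comparison machinery of Lemmas~\ref{dinfinity}, \ref{lem:indineq}, and~\ref{lem:nocbtwn}. For the forward direction, suppose $w$ induces $\pi$. By Lemma~\ref{beginszeta} there is a unique valid $\sigma$-segmentation of $\hat\pi^\star$ with associated prefix $\zeta=w_{[1,n-1]}$, so it remains only to verify the inequalities on $w_{[n,\infty)}$. Since $w$ induces $\pi$, the shift $w_{[n,\infty)}$ (the smallest-indexed tail, corresponding to position $\pi_n$) must be larger than $w_{[y,\infty)}=q\,w_{[n,\infty)}$ when $\pi_n\neq 1$ and smaller than $w_{[x,\infty)}=p\,w_{[n,\infty)}$ when $\pi_n\neq n$. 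In the first case Lemma~\ref{dinfinity} (the $\gess$ version, applied with $d=q$ and $v=w_{[n,\infty)}$) upgrades $w_{[n,\infty)}\gess q\,w_{[n,\infty)}$ to $w_{[n,\infty)}\gess q^\infty$; symmetrically the $\less$ version gives $w_{[n,\infty)}\less p^\infty$. When $\pi_n=1$ there is no $q$, but $w_{[n,\infty)}\gesseq\wmin$ is automatic since $\wmin$ is the smallest word; likewise $\pi_n=n$ gives $w_{[n,\infty)}\lesseq\wmax$. This yields conditions (a)–(c).

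For the converse, suppose $\zeta=w_{[1,n-1]}$ comes from a valid $\sigma$-segmentation and $w_{[n,\infty)}$ satisfies the relevant inequality. First one should check $\Pat(w,\Sigma_\sigma,n)$ is defined: this follows from the same argument as in Lemma~\ref{patdefined}, or more directly because the strict inequalities in (a)–(c) rule out collisions among the tails $w_{[r,\infty)}$, $1\le r\le n$ (a collision would force $w_{[n,\infty)}$ to equal a periodic word of the excluded type, as in the proofs of Lemmas~\ref{patdefined} and~\ref{nothingbetween}). Once definedness is known, I want to invoke Lemma~\ref{lem:patternsij}: it suffices to show $w_{[y,\infty)}\less w_{[n,\infty)}\less w_{[x,\infty)}$ (with the appropriate clause omitted when $\pi_n\in\{1,n\}$) together with the ``nothing strictly between'' condition. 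The inner inequalities follow from the hypothesis via Lemma~\ref{dinfinity} run in the other direction: $w_{[n,\infty)}\less p^\infty$ together with Lemma~\ref{dinfinity} gives $w_{[n,\infty)}\less p\,w_{[n,\infty)}=w_{[x,\infty)}$, and $q^\infty\less w_{[n,\infty)}$ gives $w_{[y,\infty)}=q\,w_{[n,\infty)}\less w_{[n,\infty)}$. The ``nothing strictly between'' condition is exactly the content of Lemma~\ref{lem:nocbtwn}, applied with $w$ there taken to be our $w_{[n,\infty)}$: in case (a) it says there is no $c$ with $q\,w_{[n,\infty)}\less z_{[c,n-1]}w_{[n,\infty)}\less p\,w_{[n,\infty)}$, i.e.\ no $c$ with $w_{[y,\infty)}\less w_{[c,\infty)}\less w_{[x,\infty)}$ after noting $w_{[c,\infty)}=z_{[c,n-1]}w_{[n,\infty)}$ for $1\le c<n$; cases (b) and (c) are handled by the corresponding clauses of Lemma~\ref{lem:nocbtwn}. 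Then Lemma~\ref{lem:patternsij} concludes $\Pat(w,\Sigma_\sigma,n)=\pi$.

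The main obstacle is bookkeeping the three cases ($\pi_n$ interior, $\pi_n=1$, $\pi_n=n$) uniformly while keeping track of which of $p,q,\wmax,\wmin$ are defined, and making sure the hypotheses of Lemmas~\ref{dinfinity}, \ref{lem:indineq}, \ref{lem:nocbtwn}, and~\ref{lem:patternsij} line up — in particular that the strictness of the inequalities is used where needed (to get definedness and to match the strict hypotheses of Lemma~\ref{lem:patternsij}) and relaxed to $\lesseq$/$\gesseq$ where $\wmin$/$\wmax$ appear. One should also double-check that in case (b), since there is no $y$ or $q$, the relevant clause of Lemma~\ref{lem:nocbtwn}(b) genuinely covers the ``below $w_{[n,\infty)}$'' side as well, which it does because $\wmin\lesseq w_{[c,\infty)}$ always and $\Pat$ being defined rules out equality with $w_{[n,\infty)}$; symmetrically for case (c). Modulo this case analysis, every step is a direct appeal to a lemma already proved.
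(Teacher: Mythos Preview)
Your plan is correct and matches the paper's proof: both directions use Lemma~\ref{beginszeta}, Lemma~\ref{dinfinity}, Lemma~\ref{lem:nocbtwn}, and Lemma~\ref{lem:patternsij} in exactly the way you describe. The one place to tighten is the definedness step in the converse: the reference to Lemma~\ref{patdefined} is off (that argument is specific to the particular tails $\wmin$, $\wmax$), but your ``more direct'' idea works once made precise---a collision gives $w_{[n,\infty)}=(z_{[c,n-1]})^\infty$, and then Lemma~\ref{dinfinity} turns the hypothesis into $qw_{[n,\infty)}\less z_{[c,n-1]}w_{[n,\infty)}\less pw_{[n,\infty)}$, contradicting Lemma~\ref{lem:nocbtwn} (not Lemma~\ref{nothingbetween}); this is exactly what the paper does.
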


\begin{proof} 
Suppose first that $\Pat(w, \Sigma_{\sigma}, n) = \pi$.   Fix the valid $\sigma$-segmentation of $\hat{\pi}^{\star}$ such that $\zeta = w_{[1, n-1]}$, which is guaranteed to exist by Lemma~\ref{beginszeta}.   Since $w$ induces $\pi$, when $\pi_n \neq n$, the inequality $\pi_x > \pi_n$ implies that $p w_{[n, \infty)} \gess w_{[n, \infty)}$.  Applying Lemma~\ref{dinfinity}, we obtain $w_{[n, \infty)} \less p^\infty$. 
Similarly, when $\pi_n \neq 1$, the inequality $\pi_y < \pi_n$ implies that $qw_{[n, \infty)} \less w_{[n, \infty)}$, and applying Lemma~\ref{dinfinity} gives $q^\infty \less w_{[n, \infty)}$. The inequalities $w_{[n, \infty)} \lesseq \wmax$ 
and $\omega \lesseq w_{[n, \infty)}$ follow immediately from the fact that $\wmax$ and $\wmin$ are the largest and the smallest words in $\Wk$ with respect to $\less$, respectively. 

For the other direction, we will show that any word $w$ satisfying the above conditions induces $\pi$. Consider the case that $\pi_n \notin \{1, n\}$; the remaining cases follow analogous arguments.     Let $\zeta$ be the prefix defined by a valid $\sigma$-segmentation of $\hat{\pi}^{\star}$, and let $w$ be a word such that $w_{[1, n-1]} = \zeta$ and $q^\infty \less w_{[n, \infty)} \less p^\infty$.  We claim that $\Pat(w, \Sigma_{\sigma}, n)$ is defined.  Indeed, if $\Pat(w, \Sigma_{\sigma}, n)$ were not defined, there would be an index $1 \leq c < n$ such that $z_{[c, n-1]} w_{[n, \infty)} = w_{[n, \infty)}$.  Then $w_{[n, \infty)} = (z_{[c, n-1]})^\infty$, and so $q^\infty \less (z_{[c, n-1]})^\infty \less p^\infty$. Applying Lemma~\ref{dinfinity}, we would obtain $q (z_{[c, n-1]})^\infty \less (z_{[c, n-1]})^\infty  \less p (z_{[c, n-1]})^\infty $, equivalently $qw_{[n, \infty)} \less  z_{[c, n-1]}w_{[n, \infty)} \less p w_{[n, \infty)}$, a contradiction to Lemma~\ref{lem:nocbtwn}(a).   Therefore, $\Pat(w, \Sigma_{\sigma}, n)$ is defined and there is no $1 \leq c < n$ such that $w_{[n, \infty)} \less w_{[c, \infty)} \less w_{[x, \infty)}$.  By Lemma~\ref{lem:patternsij}, we conclude that $\Pat(w, \Sigma_{\sigma}, n) = \pi$.  
\end{proof}

\begin{cor}\label{cor:intervals}
$\Int(\pi, \Sigma_{\sigma})$ can be expressed as a finite disjoint union of intervals as
$$\Int(\pi, \Sigma_{\sigma})=\begin{cases}
\displaystyle\bigcup_\zeta \{ \zeta w_{[n, \infty)}: q^\infty \less w_{[n, \infty)} \less p^\infty\} & \text{if }\pi_{n} \notin \{1, n\},\\
\displaystyle\bigcup_\zeta \{ \zeta w_{[n, \infty)}: \wmin \lesseq w_{[n, \infty)} \less p^\infty\} & \text{if }\pi_{n}=1,\\
\displaystyle\bigcup_\zeta \{ \zeta w_{[n, \infty)}: q^\infty \less w_{[n, \infty)} \lesseq \wmax\} & \text{if }\pi_{n}=n,
\end{cases}
$$
where $\zeta$ ranges over the prefixes defined by valid $\sigma$-segmentations of $\hat{\pi}^{\star}$.
\end{cor}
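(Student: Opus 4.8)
The plan is to read off the corollary directly from Theorem~\ref{thm:intervals}, whose statement is essentially a reformulation of the set $\Int(\pi,\Sigma_\sigma)$ in terms of prefixes and tails. First I would note that Theorem~\ref{thm:intervals} says precisely that $w\in\Int(\pi,\Sigma_\sigma)$ if and only if $w$ decomposes as $w = \zeta\, w_{[n,\infty)}$ where $\zeta$ is a prefix defined by \emph{some} valid $\sigma$-segmentation of $\hat\pi^\star$ and $w_{[n,\infty)}$ lies in the interval of $\Wk$ (with respect to $\less$) dictated by the value of $\pi_n$: the open interval $(q^\infty, p^\infty)_{\less}$ if $\pi_n\notin\{1,n\}$, the interval $[\wmin, p^\infty)_{\less}$ if $\pi_n=1$, and $(q^\infty, \wmax]_{\less}$ if $\pi_n=n$. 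Taking the union over all valid $\sigma$-segmentations (equivalently, over all such prefixes $\zeta$), and observing that the set $\{\zeta w_{[n,\infty)} : w_{[n,\infty)}\in I\}$ for an interval $I\subseteq\Wk$ is itself an interval of $\Wk$ (prepending a fixed finite word $\zeta$ is an order-preserving or order-reversing injection onto the clopen set of words beginning with $\zeta$, according to the parity of $\|\zeta\|$, and in either case carries intervals to intervals), gives exactly the displayed three-case formula.

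The one genuine thing to verify is that this union is \emph{disjoint}. Here the key fact is Lemma~\ref{beginszeta}: if $w$ induces $\pi$, then the valid $\sigma$-segmentation of $\hat\pi^\star$ with associated prefix $\zeta = w_{[1,n-1]}$ is \emph{unique}. Consequently two distinct valid $\sigma$-segmentations define distinct prefixes $\zeta \neq \zeta'$ (this is also guaranteed directly by condition~(f) of Definition~\ref{def:segmentation}), and since any $w = \zeta w_{[n,\infty)}$ in the first block has $w_{[1,n-1]} = \zeta$ while any $w' = \zeta' w'_{[n,\infty)}$ in the second has $w'_{[1,n-1]} = \zeta' \neq \zeta$, the two blocks cannot share an element. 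Thus the union over $\zeta$ is disjoint, and finiteness is immediate because there are only finitely many sequences of indices $(e_0,\dots,e_k)$ with $0=e_0\le\dots\le e_k=n$, hence only finitely many $\sigma$-segmentations of $\hat\pi^\star$.

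Finally I would record that each individual block $\{\zeta w_{[n,\infty)} : w_{[n,\infty)}\in I\}$ is a genuine (nonempty) interval of $\Wk$: nonemptiness follows because $\pi\in\Al_n(\Sigma_\sigma)$ and the witnessing words $s^{(m)}$ or $t^{(m)}$ from Lemma~\ref{inducepi} have the required form, and more simply because the defining inequalities $q^\infty \less w_{[n,\infty)} \less p^\infty$ (resp. the one-sided versions) are satisfiable — indeed Lemma~\ref{patdefined} and the arguments in Theorem~\ref{thm:intervals} show the relevant endpoints are distinct. I do not expect any serious obstacle: the corollary is a bookkeeping consequence of Theorem~\ref{thm:intervals} together with the uniqueness clause of Lemma~\ref{beginszeta}, and the only point requiring a sentence of care is the disjointness of the union, which is exactly where that uniqueness is used.
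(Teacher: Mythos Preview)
Your proposal is correct and follows essentially the same approach as the paper: both derive the displayed formula directly from Theorem~\ref{thm:intervals}, observe finiteness from the finitely many $\sigma$-segmentations, and argue disjointness from the fact that distinct valid $\sigma$-segmentations determine distinct prefixes $\zeta$. Your version is somewhat more explicit (spelling out why prepending $\zeta$ sends intervals to intervals, and invoking Lemma~\ref{beginszeta}/condition~(f) for disjointness), but the content is the same.
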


\begin{proof}
From Theorem~\ref{thm:intervals} and the fact that $\hat{\pi}^{\star}$ can only have finitely many $\sigma$-segmentations, we conclude that $\Int(\pi, \Sigma_{\sigma})$ is a finite union of intervals defined by valid $\sigma$-segmentations of $\hat{\pi}^{\star}$ whose endpoints are of the form $\zeta q^\infty$ (or $\zeta \wmin$, if $\pi_n = 1$) and $\zeta p^\infty$ (or $\zeta \wmax$, if $\pi_n = n$).

This union is disjoint because different valid $\sigma$-segmentations of $\hat{\pi}^{\star}$ define distinct intervals according to Theorem~\ref{thm:intervals}.  
\end{proof}

Corollary~\ref{cor:intervals} expresses $\Int(\pi, \Sigma_{\sigma})$ as a union of intervals in $\Wk$, one for each valid $\sigma$-segmentation of $\hat{\pi}^{\star}$. We call such intervals \textit{allowed intervals}.  Let $I_n(\Sigma_{\sigma})$ denote the total number of allowed intervals of $\Sigma_{\sigma}$ for permutations $\pi$ of length~$n$.  By definition, $I_{n}(\Sigma_{\sigma})$ equals the number of pairs $(\pi, E)$ where $\pi \in \Al_n(\Sigma_{\sigma})$ and $E$ is a valid $\sigma$-segmentation of $\hat{\pi}^{\star}$.  

 Note that, since the words $\zeta= z_{[1, n-1]}$, $p= z_{[x, n-1]}$ and $q= z_{[y, n-1]}$ that appear in Corollary~\ref{cor:intervals} are defined in terms of a valid $\sigma$-segmentation of $\hat{\pi}^{\star}$,  we have that $p \neq q^2$ and $q \neq p^2$. Additionally, 
by Lemma~\ref{prefixlemsd2}, both $p$ and $q$ are primitive or equal to $d^2$ for some primitive word $d$ such that $\|d\|$ is odd. 

  Each word $w \in \Wk$ for which $\Pat(w, \Sigma_{\sigma}, n)$ is defined must belong to some allowed interval (as described in Lemma~\ref{beginszeta}), and so allowed intervals partition the set of words $w \in \Wk$ for which $\Pat(w, \Sigma_\sigma, n)$ is defined.

\begin{example} For the signed shift $\Sigma_{\sigma}$ with signature $\sigma = +-$ and patterns of length $n=3$, there are $I_3(\Sigma_{+-})=8$ allowed intervals:
\begin{align*}
\Int(123, \Sigma_{+-}) &=  (0^\infty, 0010^\infty]_{\less} \cup [0110^\infty, 01^\infty)_{\less}; \\ 
\Int(132, \Sigma_{+-}) &=  (01^\infty, (01)^\infty)_{\less};  \\
\Int(231, \Sigma_{+-}) &= ((01)^\infty, 010^\infty]_{\less} \cup [110^\infty, 1^\infty)_{\less}; \\
\Int(213, \Sigma_{+-}) &= (1^\infty, 1110^\infty]_{\less} \cup [1010^\infty, (10)^\infty)_{\less}; \\
\Int(312, \Sigma_{+-}) &= ((10)^\infty, 10^\infty)_{\less}.
\end{align*}
 For example, for $\pi = 231$, the marked cycle $\hat{\pi}^{\star} = {\star}31$ has two valid $\sigma$-segmentations $(0, 0, 3)$ and $(0, 2, 3)$, and so $\Int(231, \Sigma_{+-})$ is a union of two allowed intervals.  The segmentation $(0, 0, 3)$ gives $\zeta = 11$, $p = 11$, and its corresponding allowed interval is $$\{\zeta w_{[n, \infty)}: \wmin \lesseq w_{[n, \infty)} \less p^\infty\}=[110^\infty, 1^\infty)_{\less},$$ using that $\wmin = 0^\infty$.  The segmentation $(0, 2, 3)$ defines $\zeta' = 01$, so $p' = 01$, and the corresponding allowed interval is $$\{ \zeta' w_{[n, \infty)} : \wmin \lesseq w_{[n, \infty)} \less (p')^\infty\}=[(01)^\infty, 010^\infty)_{\less}.$$
 
By Definition~\ref{def:less}, there is no word $v$ such that $$0\wmax = 010^\infty \less v  \less110^\infty = 1 \wmax.$$ It follows that the above 8 allowed intervals partition the set of words for which $\Pat(w, \Sigma_{+-}, 3)$ is defined.  See Figure~\ref{fig:alintervals} for these intervals interpreted as intervals of the real line.  The identification here is the order-preserving bijection discussed in~\cite{KAchar}.  

\begin{figure}[htb]
\centering
\begin{tikzpicture}[scale=1.4]
\draw[very thick] (0, 0) -- (10, 0);
\draw[very thick] (0, -.25) -- (0, .25);
\draw[very thick]  (10, -.25) -- (10, .25);

\draw (0, .25) -- (0, -.25) node[below] {$0^\infty$};
\draw (3.333, .25) -- (3.333, -.25) node[below] {$01^\infty$};
\draw (4, .25) -- (4, -.25) node[below,xshift=2pt] {$(01)^\infty$};
\draw (6.666, .25) -- (6.666, -.25) node[below] {$1^\infty$};
\draw (8, .25) -- (8, -.25) node[below,xshift=6pt] {$(10)^\infty$};
\draw (10, .25) -- (10, -.25) node[below] {$10^\infty$};

\draw [thick, decorate, decoration={brace,amplitude=6pt},yshift=0pt]
(0.05,0.45) -- (3.2833,0.45) node [black,midway,yshift=0.5cm] 
{$123$};
\draw [thick, decorate, decoration={brace, amplitude = 6pt},yshift=0pt]
(3.383,0.45) -- (3.95,0.45) node [black,midway,yshift=0.5cm] {$132$};
\draw [thick, decorate, decoration={brace, amplitude = 6pt},yshift=0pt]
(4.05,0.45) -- (6.6166,0.45) node [black,midway,yshift=0.5cm] {$231$};
\draw [thick, decorate, decoration={brace, amplitude = 6pt},yshift=0pt]
(6.7166,0.45) -- (7.95,0.45) node [black,midway,yshift=0.5cm] {$213$};
\draw [thick, decorate, decoration={brace, amplitude = 6pt},yshift=0pt]
(8.05,0.45) -- (9.95,0.45) node [black,midway,yshift=0.5cm] {$312$};

\draw (2.5, 0.1)node[above]{\footnotesize $0010^\infty$}--(2.5, -.1) node[below] {\footnotesize $0110^\infty$};
\draw (5, 0.1)node[above]{\footnotesize $010^\infty$}--(5, -.1)node[below]{\footnotesize $110^\infty$};
\draw(7.5, 0.1)node[above]{\footnotesize $1010^\infty$}--(7.5, -.1) node[below]  {\footnotesize $1110^\infty$};
  
\end{tikzpicture}
\caption{The sets $\Int(\pi, \Sigma_{+-})$ for $\pi \in \Al_{3}(\Sigma_{+-})$,
interpreted as unions of allowed intervals in the real line.} 
\label{fig:alintervals}
\end{figure}
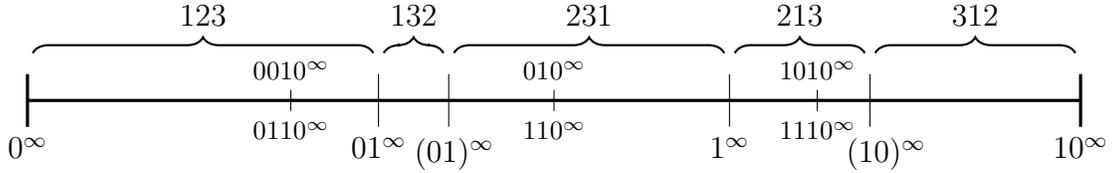

\end{example}

Recall that the number of primitive words of length $t$ on $k$ letters is 
\begin{equation}\label{eq:psi} \psi_k(t):= \sum_{d | t} \mu(d) k^\frac{t}{d},\end{equation} 
where $\mu$ denotes the number-theoretical M\"obius function. The number of words in $\Wk$  of the form  $z_{[1, n-i -1]} (z_{[n-i, n-1]})^\infty$, for some $i$ and such that $z_{[n-i, n-1]}$ is primitive, is given by:
\begin{equation}\label{ank} a(n, k) := \sum_{i = 1}^{n-1} k^{ n- i- 1} \psi_{k}(i).\end{equation}

Now we can give a complete proof of the following result, which was originally stated in \cite[Thm.~5.3]{KAchar}.

\begin{thm}\label{thm:numberalin} We have $ \card{\Al_{n}(\Sigma_{\sigma})} \leq  I_{n}(\Sigma_{\sigma}) $. Additionally, 
\begin{equation}\label{eq:ank}I_{n}(\Sigma_{\sigma})= \begin{cases}
a(n, k) + (k - 2)k^{n-2} & \mbox{if } \sigma_0 = \sigma_{k-1} = +,\\
a(n, k) + (k-1)k^{n-2} & \mbox{if } \sigma_0 \neq \sigma_{k-1},\\
a(n, k) + (k^2 - 2)k^{n-3} & \mbox{if } \sigma_0 = \sigma_{k-1} = -,
\end{cases} \end{equation}
with $a(n,k)$ as given by Equation~\eqref{ank}.
\end{thm}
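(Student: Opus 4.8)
The plan is to count pairs $(\pi, E)$ where $\pi \in \Al_n(\Sigma_\sigma)$ and $E$ is a valid $\sigma$-segmentation of $\hat\pi^\star$, since by Corollary~\ref{cor:intervals} this number equals $I_n(\Sigma_\sigma)$. The key observation is that Theorem~\ref{thm:intervals} sets up a bijection between such pairs and the allowed intervals, whose left endpoints have a canonical eventually-periodic form. Concretely, each allowed interval is of the form $\{\zeta w_{[n,\infty)} : \cdots\}$ determined by a prefix $\zeta = z_{[1,n-1]}$ coming from a valid $\sigma$-segmentation; I would parametrize allowed intervals by the finite word $\zeta$ together with the data of which of the three cases ($\pi_n \notin\{1,n\}$, $\pi_n=1$, $\pi_n=n$) we are in and the relevant tail word ($p$ or $q$). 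The inequality $\card{\Al_n(\Sigma_\sigma)} \le I_n(\Sigma_\sigma)$ is immediate since each $\pi \in \Al_n(\Sigma_\sigma)$ contributes at least one valid segmentation by Theorem~\ref{characterization}.

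The main step is to show that the valid $\sigma$-segmentations of $\hat\pi^\star$, ranging over all $\pi \in \S_n$, are in bijection with a combinatorially transparent set of words. I would argue that given any finite word $\zeta = z_{[1,n-1]}$ on $\{0,\dots,k-1\}$ together with a choice of tail, one can reconstruct $\pi$ (the relative order of the shifts of $\zeta w_{[n,\infty)}$ for suitable $w_{[n,\infty)}$ in the allowed interval determines $\pi$ uniquely, using Lemma~\ref{beginszeta}). The reconstruction works as follows: the pair (prefix $\zeta$, indicator of the case/tail) determines the word $w$ up to choices inside the allowed interval, and all such $w$ induce the same $\pi$. So counting $(\pi,E)$ amounts to counting the eventually periodic words $\zeta p^\infty$ (resp. $\zeta q^\infty$, or $\zeta \wmin$/$\zeta\wmax$) that arise as left endpoints of allowed intervals. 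The number of words of the form $z_{[1,n-i-1]}(z_{[n-i,n-1]})^\infty$ with the period primitive is exactly $a(n,k)$ by Equation~\eqref{ank}, and these account for the left endpoints in the "generic" cases where $p$ or $q$ is a genuine primitive period. The correction terms $(k-2)k^{n-2}$, $(k-1)k^{n-2}$, $(k^2-2)k^{n-3}$ handle: (i) the intervals where $\pi_n \in \{1,n\}$, whose endpoint is $\zeta\wmin$ or $\zeta\wmax$ rather than $\zeta p^\infty$ — here $\zeta$ is essentially a free word of length $n-1$, contributing $\sim k^{n-1}$ but with constraints at the boundary governed by conditions (b)–(e) of Definition~\ref{def:segmentation} and by the constraint $p \ne 0$ or $p \ne 0(k{-}1)$ established in Lemma~\ref{patdefined}; and (ii) the cases where $p = d^2$ with $\|d\|$ odd, which are double-counted or must be excluded. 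The three cases of $\sigma_0,\sigma_{k-1}$ enter precisely because $\wmin$ and $\wmax$ take different forms (Equation~\eqref{eq:wmax}) and because conditions (b)–(e) of Definition~\ref{def:segmentation} are triggered differently.

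I expect the main obstacle to be the careful bookkeeping of the correction terms, and in particular disentangling the two sources of discrepancy from $a(n,k)$: intervals with $\pi_n \in \{1,n\}$ (whose endpoints are $\zeta\wmin$ or $\zeta\wmax$, not captured by the primitive-period count) versus the subtlety that when $p = d^2$ with $\|d\|$ odd, the word $\zeta p^\infty = \zeta d^\infty$ is counted once in $a(n,k)$ under period $d$, but corresponds to a different (or invalid, by Definition~\ref{def:invalid}) configuration. I would handle this by partitioning $(\pi, E)$ according to $\pi_n$: for $\pi_n \notin \{1,n\}$, the left endpoint $\zeta q^\infty$ is an eventually periodic word whose primitive period is a cyclic shift of $q$ or of $d$ (when $q=d^2$), and I would show these are in bijection with the $a(n,k)$-type words minus a correction accounting for the validity constraint $p \ne q^2$, $q \ne p^2$; for $\pi_n \in \{1,n\}$, the endpoint is $\zeta\wmin$ (resp. $\zeta\wmax$) and I would directly count the admissible prefixes $\zeta$ of length $n-1$, where the count $k^{n-1}$ is reduced according to which letters are forbidden at the start/end of $\zeta$ by conditions (b)–(e) — this is where the case split on $\sigma_0, \sigma_{k-1}$ produces the differing constants $k-2$, $k-1$, $k^2-2$. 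Assembling these pieces and verifying the arithmetic collapses to the three stated formulas is routine but lengthy; the conceptual content is the bijection between allowed intervals and their eventually-periodic left endpoints.
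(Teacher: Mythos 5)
Your overall strategy coincides with the paper's: the inequality $\card{\Al_n(\Sigma_\sigma)}\le I_n(\Sigma_\sigma)$ is immediate from Theorem~\ref{characterization}, and $I_n(\Sigma_\sigma)$ is computed by counting, for each allowed interval, one distinguished endpoint (the paper uses the ``upper'' endpoints, namely $\zeta p^\infty$ when $\pi_n\ne n$ and $\zeta\wmax$ when $\pi_n=n$, of which each interval has exactly one), identifying the eventually periodic ones with an $a(n,k)$-type set and counting the $\wmax$-type ones separately via a case analysis on $\sigma_0,\sigma_{k-1}$. The genuine gap is in your item (ii): you expect the configurations with $p=d^2$ ($\|d\|$ odd) to be ``double-counted or \ldots excluded'' and you plan a further subtraction ``accounting for the validity constraint $p\ne q^2$, $q\ne p^2$.'' Neither correction appears in the final formula, and carrying them out as described would not collapse to Equation~\eqref{eq:ank}. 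The missing idea is that these two phenomena exactly offset each other: the endpoints $\zeta p^\infty$ are in bijection with $\Delta^\sigma_n=\{uv^\infty: |u|+|v|=n-1,\ v\text{ primitive},\ v^\infty\ne\wmin\}$, where one parametrizes by the \emph{primitive} eventual period. When $p=d^2$ one takes $u=z_{[1,x-1]}d$ and $v=d$ (still with $|u|+|v|=n-1$), so that $v$ is a suffix of $u$ with $\|v\|$ odd; when $p$ is primitive with $\|p\|$ odd, validity together with Lemma~\ref{lem:doubleback} guarantees that $p$ is \emph{not} a suffix of $z_{[1,x-1]}$. Hence every word in $\Delta^\sigma_n$ decodes uniquely (set $p=v^2$ if $v$ is a suffix of $u$ and $\|v\|$ is odd, and $p=v$ otherwise), and the only deviation from $a(n,k)$ is the exclusion $v^\infty\ne\wmin$, which produces the $-k^{n-2}$ or $-k^{n-3}$ adjustments depending on $\sigma_0$ and $\sigma_{k-1}$. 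Without this observation your bookkeeping does not close.

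A smaller inaccuracy: the restrictions on the prefixes $\zeta$ of the $\wmax$-endpoints do not come from conditions (b)--(e) of Definition~\ref{def:segmentation}, but from the order-theoretic requirement $(z_{n-1})^\infty\less\wmax$ (resp.\ $(z_{[n-2,n-1]})^\infty\less\wmax$) obtained via Lemma~\ref{dinfinity}; this forbids $z_{n-1}=k{-}1$ when $\sigma_{k-1}=+$ and forbids $z_{[n-2,n-1]}=(k{-}1)0$ when $\sigma_0=\sigma_{k-1}=-$, yielding the counts $(k-1)k^{n-2}$, $k^{n-1}$ and $(k^2-1)k^{n-3}$ in the three cases. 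With these two repairs your outline assembles to the stated formula.
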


\begin{proof} To show the first statement, recall that the number of allowed intervals $I_{n}(\Sigma_{\sigma})$ equals the number of pairs $(\pi, E)$ where $\pi \in \Al_n(\Sigma_{\sigma})$ and $E$ is a valid $\sigma$-segmentation of $\hat{\pi}^{\star}$.    Since each $\pi \in \Al_n(\Sigma_{\sigma})$ has at least one valid $\sigma$-segmentation by Theorem~\ref{characterization}, it is clear that $\card{ \Al_n(\Sigma_\sigma)} \leq I_n(\Sigma_{\sigma})$.

By Corollary~\ref{cor:intervals}, allowed intervals are of the form $\{\zeta w_{[n, \infty)}: q^\infty \less w_{[n, \infty)} \less p^\infty\}$, $\{\zeta w_{[n, \infty)}: \wmin \lesseq w_{[n, \infty)} \less p^\infty\}$, or $\{\zeta w_{[n, \infty)}: q^\infty \less w_{[n, \infty)} \lesseq \wmax\}$, where $\zeta$, $p$ and $q$ are defined by a valid $\sigma$-segmentation of $\hat{\pi}^{\star}$ for some permutation $\pi \in \Al_n(\Sigma_{\sigma})$.  We enumerate the allowed intervals by counting their endpoints of the form $\zeta p^\infty$ and $\zeta \wmax$, of which each interval has precisely one.   

We claim that endpoints of the form $\zeta p^\infty$ are precisely the elements of the set 
$$\Delta^{\sigma}_n = \{ uv^\infty : |u|+|v|=n-1, v \text{ is primitive, and }v^\infty \neq \wmin\}.$$  Indeed, for an endpoint $\zeta p^\infty$, Lemma~\ref{prefixlemsd2} and the proof of Lemma~\ref{patdefined} imply that $p^\infty \neq \wmin$, but all other words $p$ are possible as long as either $p$ is primitive, or $p = d^2$ with $d$ primitive and $\|d\|$ is odd. To express $\zeta p^\infty$ as an element of $\Delta^{\sigma}_n$, we consider these two cases separately:  
\begin{itemize}
\item If $p$ is primitive, we take $u=z_{[1, x-1]}$ and $v=p$. In this case, if $\| v \|$ is odd, then $v$ cannot be a suffix of $u$, since otherwise Lemma~\ref{lem:doubleback} would imply that that $q = p^2$, contradicting that $\zeta$ was given by a valid segmentation.
\item If $p = d^2$, we take $u=z_{[1, x-1]}d$ and $v=d$.  Note that $\|v\|$ is odd and $v$ is a suffix of $u$.  
\end{itemize}
Conversely, each word in $\Delta^{\sigma}_n$ corresponds to a unique endpoint $\zeta p^\infty$ by taking $\zeta = uv$, and $p = vv$ if $v$ is a suffix of $u$ and $\|v\|$ is odd, or $p = v$ otherwise.  To count the number of words in $\Delta^{\sigma}_n$, we consider three cases:

\begin{itemize}
\item If $\sigma_0 = +$, then $\wmin = 0^\infty$ by Equation~\eqref{eq:wmax}.  Therefore, we must have $v \neq 0$, and so $\card{\Delta^{\sigma}_n} = a(n, k) - k^{n-2}$. 

\item If $\sigma_0 = -$ and $\sigma_{k-1}= +$, then $\wmin = 0(k{-}1)^\infty$.  Since $\wmin \neq v^\infty$ for all $v$, there are no restrictions on the choice of $v$.  Hence, $\card{\Delta^{\sigma}_{n}} = a(n, k)$.  

\item If $\sigma_0 = -$ and $\sigma_{k-1} = -$, then $\wmin =  (0(k{-}1))^\infty$.  Therefore, we must have $v \neq 0(k{-}1)$, and so $|\Delta^{\sigma}_n| = a(n, k) - k^{n-3}$.
\end{itemize}

\smallskip
Next we count endpoints of the form $\zeta \wmax$, which correspond to allowed intervals for $\pi \in \Al_n(\Sigma_\sigma)$ satisfying $\pi_n = n$.  

\begin{itemize}
\item If $\sigma_{k-1} = +$, then $\wmax = (k{-}1)^\infty$ by Equation~\eqref{eq:wmax}.  We claim that $z_{n-1} \neq k{-}1$.
To see this, let $\zeta w_{[n, \infty)}$ be a word in the corresponding allowed interval, and so $\Pat(\zeta w_{[n, \infty)}, \Sigma_{\sigma}, n) = \pi$ with $\pi_n = n$.  Then the inequality $\pi_{n-1} < \pi_n$ implies that $z_{n-1}w_{[n, \infty)} \less w_{[n, \infty)}$, and by Lemma~\ref{dinfinity}, $(z_{n-1})^\infty \less w_{[n, \infty)}$.  In particular,  we have $(z_{n-1})^\infty \less \wmax = (k{-}1)^\infty$.     Since all other prefixes $\zeta$ are possible, the number of endpoints of the form $\zeta \wmax$ is equal to $(k-1)k^{n-2}$.

\item If $\sigma_0 = +$ and $ \sigma_{k-1} = -$, then $\wmax = (k{-}1)0^\infty$.  Since $\wmax \neq (z_{[i, n-1]})^\infty$ for any $z_{[i, n-1]}$, all prefixes $\zeta$ are possible.   Therefore, the total number of endpoints of the form $\zeta \wmax$ is equal to $k^{n-1}$.  

\item If $\sigma_0 = \sigma_{k-1} = -$, then $\wmax = ((k{-}1)0)^\infty$ by Equation~\eqref{eq:wmax}.  We claim that $z_{[n-2, n-1]} \neq (k{-}1)0$.  To see this, let $\zeta w_{[n, \infty)}$ be a word in the corresponding allowed interval, and so $\Pat(\zeta w_{[n, \infty)}, \Sigma_{\sigma}, n) = \pi$ with $\pi_n = n$.  Then $\pi_{n-2} < \pi_n$ implies that $z_{[n-2, n-1]} w_{[n, \infty)} \less w_{[n, \infty)}$, and by Lemma~\ref{dinfinity}, $(z_{[n-2, n-1]})^\infty \less w_{[n, \infty)}$.  In particular, $(z_{n-2} z_{n-1})^\infty \less \wmax = ((k{-}1)0)^\infty$, and so $z_{[n-2, n-1]}\neq (k{-}1)0$.  Since all other prefixes are possible, the number of endpoints of the form $\zeta \wmax$ is equal to $(k^2 -1)k^{n-3}$.   
\end{itemize}

 Combining the words in each case gives Equation~\eqref{eq:ank}.  
\end{proof}

\section{The Negative Shift}\label{sec:negativeshift}

Restricting to the positive and negative shifts, Theorem~\ref{characterization} allows us to derive simple formulas for the smallest positive integer $k$ such that $\pi$ is realized by the $k$-shift, and similarly for the $-k$-shift. In the rest of the paper, we will use $\Sigma_k$ and $k$-segmentation (resp. $\Sigma_{-k}$ and $-k$-segmentation) to refer to $\Sigma_\sigma$ and a $\sigma$-segmentation where $\sigma = +^k$ (resp. $\sigma = -^k$). 

For the positive shift, let  
$N(\pi) = \text{min}\{k\ge2 : \pi \in \Al(\Sigma_k)\}$.
It was shown in~\cite{EliShift} that  
\begin{equation}\label{eq:N}N(\pi) = 1 + \text{des}(\hat{\pi}^{\star}) + \epsilon(\hat{\pi}^{\star}),
\end{equation}
where $\epsilon(\hat{\pi}^{\star}) = 1$ if $\pi_{n-1}\pi_n = 21$ or $\pi_{n-1} \pi_n = (n-1)n$, and $\epsilon(\hat{\pi}^{\star}) = 0$ otherwise.  This formula can be deduced from Theorem~\ref{characterization} by noticing that each descent of $\hat{\pi}^{\star}$ requires a new index in the segmentation, and an additional index is needed when conditions (b) or (c) in Definition~\ref{def:segmentation} hold, together with the fact that all positive segmentations are valid by Lemma~\ref{prefixlemsd2}. Note that each $\hat{\pi}^{\star}$ has a unique $N(\pi)$-segmentation.  

The analogous definition for the negative shift is
\begin{equation}\label{eq:barN} \N(\pi) = \min\{k\ge2 : \pi \in \Al( \Sigma_{-k} ) \}.
\end{equation}
Using Theorem~\ref{characterization}, we try to construct a valid $-k$-segmentation for $\hat\pi$ with the smallest possible $k$. 
An index in the segmentation is needed for each ascent of $\hat{\pi}^{\star}$. If conditions (d) or (e)  in Definition~\ref{def:segmentation} do not apply, a $-k$-segmentation exists as long as $k \geq 1 + \asc(\hat{\pi}^{\star})$. In this case, for $k=1 + \asc(\hat{\pi}^{\star})$, there is a unique $-k$-segmentation, whose indices are the ascents of $\hat{\pi}^{\star}$. We call this the \textit{minimal negative segmentation} of $\hat\pi$.  However, it can happen that no such minimal negative segmentation exists (because conditions (d) or (e) apply), or that the minimal negative segmentation is invalid.

\begin{definition}\label{def:ccr} We say that $\pi$ is 
\begin{itemize}
\item {\em $\vee$-cornered} if $\pi_{n-2}\pi_{n-1}\pi_n = (n-1)1n$, \\ (equivalently, we invoke condition~(d) in Definition~\ref{def:segmentation});
\item {\em $\wedge$-cornered} if $\pi_{n-2}\pi_{n-1}\pi_n = 2n1$, \\ (equivalently, we invoke condition~(e) in Definition~\ref{def:segmentation});
\item {\em collapsed} if the minimal negative segmentation of $\hat{\pi}^{\star}$ exists but is invalid;
\item {\em regular} if $\pi$ is neither cornered nor collapsed.  
\end{itemize} 
\end{definition}

We say that $\pi$ is {\em cornered} if it is either $\vee$-cornered or $\wedge$-cornered.
Note that a permutation cannot be simultaneously cornered and collapsed. Indeed, a collapsed permutation requires the words $p$ and $q$ to be defined, which only happens if $\pi_n \notin\{1, n\}$. 
 
 \begin{lem}\label{lem:formcollapsed} 
 Suppose that $n-x = 2(n-y)$ and $n-y$ is odd.  Then $\pi$ is collapsed if and only if  $\pi_{x+j} - \pi_{y+j} = (-1)^j$ for all $1 \leq j \leq n-y$.   The above statement also holds when $x$ and $y$ are exchanged.  
 \end{lem}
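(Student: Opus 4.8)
The plan is to work throughout with the minimal negative segmentation of $\hat{\pi}^{\star}$. Write $i=n-y$, so that the hypotheses become $n=x+2i$, $y=x+i$, and $i$ odd; since $x$ and $y$ are both defined we also have $\pi_n\notin\{1,n\}$. I first reduce to a purely combinatorial statement. Every cornered permutation has $\pi_n\in\{1,n\}$ by Definition~\ref{def:ccr}, so the hypotheses force $\pi$ to be non-cornered, hence the minimal negative segmentation exists; let $\zeta=z_{[1,n-1]}$ be its prefix. Since $\pi_n\notin\{1,n\}$, ``$\pi$ collapsed'' is equivalent by Definitions~\ref{def:invalid} and~\ref{def:ccr} to ``$p=q^2$ or $q=p^2$''; but $q=p^2$ would force $n-y=2(n-x)$, hence $n-y=4(n-y)$, which is impossible. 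So ``$\pi$ collapsed'' $\iff$ $p=q^2$ $\iff$ $z_{[x,y-1]}=z_{[y,n-1]}$ (both sides then equal $q$, since $p=z_{[x,n-1]}$ has length $2i$ and its two halves are $z_{[x,y-1]}$ and $z_{[y,n-1]}$). It therefore remains to prove that $z_{[x,y-1]}=z_{[y,n-1]}$ if and only if $\pi_{x+j}-\pi_{y+j}=(-1)^j$ for $1\le j\le i$. The statement with $x$ and $y$ exchanged follows by repeating the argument below with the roles of $x$ and $y$ interchanged (and the signs adjusted), or by applying the present case to $\pi^c$ and invoking the reverse--complement symmetry discussed before Example~\ref{ex:sym}, which fixes the negative shift and its minimal negative segmentation and interchanges $p$ and $q$.

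\emph{From $z_{[x,y-1]}=z_{[y,n-1]}$ to the difference condition.} Iterating Lemma~\ref{flipping} (with $\sigma=-^k$, so the sign appearing in part~(b) is always $-$) along the two matched blocks $z_{[x,y-1]}=z_{[y,n-1]}$, starting from $\pi_x-\pi_y=2>0$, shows that $\pi_{x+l}-\pi_{y+l}$ has sign $(-1)^l$ for $0\le l\le i$. It remains to upgrade these signs, for $1\le l\le i$, to equalities $\pi_{x+l}-\pi_{y+l}=\pm1$, i.e.\ to show that no index $g\notin\{x+l,y+l\}$ has $\pi_g$ strictly between $\pi_{x+l}$ and $\pi_{y+l}$. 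This is the heart of the argument, run in the spirit of the proof of Lemma~\ref{lem:doubleback}. Suppose $(l,g)$ is such a ``bad pair''. Applying Lemma~\ref{flipping} to the pairs $(y+l,g)$ and $(g,x+l)$ shows that, as long as $x+l$, $g$ and $y+l$ are all $<n$, one has $z_g=z_{x+l}=z_{y+l}$ and $(l+1,g+1)$ is again a bad pair (with the order of $\pi_{x+l+1}$ and $\pi_{y+l+1}$ reversed). We run this step $\min(i-l,\,n-g)$ times. If $g\le y+l$ it runs for $i-l$ steps and terminates at the triple $(y,\,g+i-l,\,n)$ with $\pi_{g+i-l}$ strictly between $\pi_{x+i}=\pi_n-1$ and $\pi_{y+i}=\pi_n$, which is absurd. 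If $g>y+l$ it terminates at a triple $(x+l',n,y+l')$ with $l\le l'<i$ and $\pi_n$ strictly between $\pi_{x+l'}$ and $\pi_{y+l'}$; since neither of those two values can equal $\pi_x=\pi_n+1$ or $\pi_y=\pi_n-1$, the value $\pi_y$ also lies strictly between them, so $(l',y)$ is a bad pair with $y\le y+l'$, reducing to the first case. Either way we reach a contradiction, so no bad pair exists and the difference condition holds.

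\emph{From the difference condition to $z_{[x,y-1]}=z_{[y,n-1]}$.} Assume $\pi_{x+j}-\pi_{y+j}=(-1)^j$ for $1\le j\le i$; we show $z_{x+l}=z_{y+l}$ for $0\le l\le i-1$, i.e.\ that $\pi_{x+l}$ and $\pi_{y+l}$ lie in the same block of the minimal negative segmentation. For $1\le l\le i-1$ the hypothesis makes $\pi_{x+l}$ and $\pi_{y+l}$ consecutive values, so the only position that could separate them is $\min(\pi_{x+l},\pi_{y+l})$; writing this position as $\pi_r$ with $\{r,r'\}=\{x+l,y+l\}$, being an ascent of $\hat{\pi}^{\star}$ would mean $\pi_{r+1}<\pi_{r'+1}$, which the hypothesis applied at $j=l+1\le i$ contradicts, so there is no cut there and $z_{x+l}=z_{y+l}$. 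For $l=0$, the values $\pi_x=\pi_n+1$ and $\pi_y=\pi_n-1$ are at distance $2$, so we must rule out cuts at both positions $\pi_n$ and $\pi_n-1$: no cut is placed at $\pi_n$ by condition~(f) of Definition~\ref{def:segmentation}, and $\pi_n-1=\pi_y$ is not an ascent because (using the $\star$-clause in the definition of ascent, as $\hat{\pi}^{\star}_{\pi_y+1}=\hat{\pi}^{\star}_{\pi_n}=\star$) this would require $\pi_{y+1}<\pi_{x+1}$, contradicting the hypothesis at $j=1$. Hence $z_{[x,y-1]}=z_{[y,n-1]}$, so $p=q^2$ and $\pi$ is collapsed.

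The main obstacle is the promotion from ``alternating signs'' to ``consecutive values'' in the forward direction: the naive iteration of Lemma~\ref{flipping} breaks down as soon as the tracked intermediate index $g$ reaches position $n$, which carries no letter of $\zeta$. The remedy --- observing that at that instant $\pi_n$, and hence also $\pi_y$, becomes sandwiched between $\pi_{x+l'}$ and $\pi_{y+l'}$, so that one restarts the iteration from the new bad pair $(l',y)$, which now terminates at the impossible configuration ``a value strictly between $\pi_n-1$ and $\pi_n$'' --- is the one genuinely delicate point. Everything else is routine bookkeeping with Lemma~\ref{flipping} and the block structure of the minimal negative segmentation.
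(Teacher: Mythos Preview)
Your proof is correct and follows essentially the same strategy as the paper: reduce ``collapsed'' to $p=q^2$, then in the forward direction iterate Lemma~\ref{flipping} along the matched blocks to squeeze any intermediate value down to a value strictly between $\pi_y=\pi_n-1$ and $\pi_n$, and in the backward direction use that the cuts of the minimal negative segmentation sit at ascents of $\hat\pi^\star$ to deduce $z_{x+l}=z_{y+l}$.

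The one substantive difference is in the forward direction. The paper, when ruling out an index $i$ with $\pi_{n-j}<\pi_i<\pi_{n-j-c}$, pushes forward $j$ steps to land at $(\pi_y,\pi_{i+j},\pi_n)$; this silently assumes $i+j\le n$, i.e.\ that the intermediate index does not reach position $n$ before the endpoints do. You explicitly split into the cases $g\le y+l$ and $g>y+l$, and in the second case restart the iteration from the new bad pair $(l',y)$ once $g$ hits $n$. This extra bookkeeping is exactly what is needed to close that gap, and your verification that $1\le l'<i$ (so that $\pi_{x+l'},\pi_{y+l'}\notin\{\pi_n-1,\pi_n,\pi_n+1\}$ and hence $\pi_y$ is also sandwiched) is the right way to justify the restart. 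So your argument is in fact a bit more complete than the paper's on this point, while otherwise matching it.
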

 \begin{proof}  Suppose that $\pi$ is collapsed and let $c = n-y$.  Since $\pi_n \notin \{1, n\}$, conditions (d) and (e) in Definition~\ref{def:segmentation} do not apply, and so a minimal negative segmentation of $\hat{\pi}^{\star}$ exists. Let $\zeta$ be the prefix defined by the minimal negative segmentation $E$, of $\hat{\pi}^{\star}$ and note that $p = q^2$.  For $j = c$, the conclusion $\pi_{x+c} - \pi_{y+c} = \pi_{y} - \pi_n =(-1)^{c}$ is immediate.  It remains to show that $\pi_{x+j} - \pi_{y + j} = (-1)^j$ for all $1 \leq j < c$.  
 
We claim that, for all $1 \leq j < c$, the values $\pi_{n-j}$ and $\pi_{n-j-c}$ are consecutive.  Let us assume that $\pi_{n-j} < \pi_{n-j-c}$ (the other case follows a parallel argument), and suppose for contradiction that there is an index $i$ such that $\pi_{n-j} < \pi_i < \pi_{n-j-c}$.  Since $z_{[n-j, n-1]} = z_{[n-j-c, n-c-1]}$, Lemma~\ref{flipping} applied $i$ times yields $\pi_{n} < \pi_{i+j} < \pi_{n-c} = \pi_x$ or $\pi_y = \pi_{n-c} < \pi_{i + j} < \pi_{n}$ (depending on the parity of $j$), a contradiction to $\pi_x = \pi_n + 1$ or $\pi_{y} = \pi_n -1$, respectively, thus proving the claim. 
 
Note that $\hat{\pi}^{\star}_{\pi_y} \hat{\pi}^{\star}_{\pi_n} \hat{\pi}^{\star}_{\pi_x} = \pi_{y+1} {\star}\pi_{x+1}$. Since $z_{[x, y-1]} = z_{[y, n-1]}=q$, we have that $z_{x} = z_y$ and $\pi_y < \pi_x$. By Lemma~\ref{flipping}, we obtain $\pi_{y+1} > \pi_{x+1}$, and so $\pi_{y+1} {\star}\pi_{x+1}$ is a descent.  By the previous paragraph, $\pi_{y+1}$ and $\pi_{x+1}$ are consecutive and so $\pi_{x + 1} - \pi_{y+1} = -1$.  Suppose inductively that $\pi_{x + j} - \pi_{y+j} = (-1)^j$. Since $z_{[y + j + 1]} = z_{[x + j + 1]}$ and $(-1)^j(\pi_{x + j} -\pi_{y+j}) > 0$, Lemma~\ref{flipping} implies that $(-1)^{j+1}(\pi_{x + j+1} - \pi_{y+j+1}) > 0$.  By the previous paragraph, $\pi_{y + j+1}$ and $\pi_{x + j + 1}$ are consecutive and so $\pi_{x + j + 1} - \pi_{y + j +1} = (-1)^{j+1}$.

To prove the converse, suppose now that $\pi_{x + j} - \pi_{y + j} = (-1)^j$ for all $1 \leq j <c$. 
Since $\pi_{x + 1} = \pi_{y+1} - 1$, the entries $\hat{\pi}^{\star}_{\pi_y} \hat{\pi}^{\star}_{\pi_{n}} \hat{\pi}^{\star}_{\pi_x}= \pi_{y+1} {\star}\pi_{x+1}$ do not contain an ascent.  Therefore, in the minimal negative segmentation $E$ of $\hat{\pi}^{\star}$, there is no index between $\pi_y$ and $\pi_x$ (i.e., there is no $e_t$ such that $\pi_y \leq e_t < \pi_x$), and so by Definition~\ref{def:segmentation}, we have $z_{x} = z_y$.  For $1 \leq j < c$, we will show $z_{x + j} = z_{y+j}$ following a similar argument.  

If $j$ is odd, then $\pi_{y + j} = \pi_{x+j} + 1$ and $\pi_{y + j + 1} = \pi_{x + j + 1} - 1$.  Since $\hat{\pi}^{\star}_{\pi_{y+j}} \hat{\pi}^{\star}_{\pi_{x+j}} = \pi_{y + j + 1} \pi_{x + j + 1}$ is a descent, there is not an index in $E_{\pi}$ between $\pi_{x + j}$ and $\pi_{y + j}$.  By Definition~\ref{def:segmentation}, we have $z_{x + j} = z_{y + j}$.  If $j$ is even, the argument is the same except that the indices $x$ and $y$ are switched.  
Therefore, $z_{[x, y-1]} = z_{[y, n-1]}$ and so $p = q^2$, which means that $\pi$ is collapsed.
\end{proof}
  
Now we can deduce from  Theorem~\ref{characterization} the following analogue of Equation~\eqref{eq:N} for negative shifts.\footnote{After we posted an earlier preprint including this result on \texttt{arxiv.org}, we were informed by Charlier and Steiner that they independently obtained Theorem~\ref{numberseg} and Corollary~\ref{cor:smallest}. Their work was subsequently posted in~\cite{ECWSnegbeta}.}

\begin{thm} \label{negativeshift}\label{numberseg}  
With $\N(\pi)$ defined by Equation~\eqref{eq:barN}, we have
$$\N(\pi) = 1 + \asc(\hat{\pi}^{\star}) + \overline{\epsilon}(\hat{\pi}^{\star}),$$
where $\overline{\epsilon}(\hat{\pi}^{\star}) = 1$ if $\pi$ is cornered or collapsed; and $\overline{\epsilon}(\hat{\pi}^{\star}) = 0$ if $\pi$ is regular.  Additionally,  the number of valid $-\N(\pi)$-segmentations of $\hat{\pi}^{\star}$ is $1$ if $\pi$ is regular, $2$ if $\pi$ is cornered, and $\min\{n - x, n-y\}$ if $\pi$ is collapsed.
\end{thm}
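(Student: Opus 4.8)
The plan is to apply Theorem~\ref{characterization}, which reduces the computation of $\N(\pi)$ to finding the least $k\ge 2$ for which $\hat{\pi}^{\star}$ admits a valid $-k$-segmentation, and reduces the count of valid $-\N(\pi)$-segmentations to counting these objects for that least $k$. The first structural fact I would record is that, since $\sigma=-^k$ makes every block of a $-k$-segmentation decreasing, each ascent of $\hat{\pi}^{\star}$ must coincide with one of the bars $e_1,\dots,e_{k-1}$; when the ascent straddles the $\star$, condition~(f) of Definition~\ref{def:segmentation} forces this bar to the position immediately left of the $\star$. Hence $\asc(\hat{\pi}^{\star})\le k-1$ for any $-k$-segmentation, so $\N(\pi)\ge 1+\asc(\hat{\pi}^{\star})$, and conversely placing bars exactly at the ascents produces a $-k$-segmentation for $k=1+\asc(\hat{\pi}^{\star})$, namely the minimal negative segmentation. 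When $\pi$ is regular this segmentation exists (conditions~(d),(e) are not invoked) and is valid, and it is the unique $-k$-segmentation for $k=1+\asc(\hat{\pi}^{\star})$ because the bars are then forced to be exactly the ascents; this proves both statements in the regular case.

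For the cornered case I would reduce the $\wedge$-cornered case to the $\vee$-cornered case via the reverse-complement symmetry recorded before Example~\ref{ex:sym}, which preserves $\asc(\hat{\pi}^{\star})$ and validity. If $\pi$ is $\vee$-cornered then $\pi_n=n$, so $\asc(\hat{\pi}^{\star})\subseteq\{1,\dots,n-2\}$, and condition~(d) is invoked, forcing $e_1=0$ or $e_{k-1}=n-1$; since neither $0$ nor $n-1$ is an ascent, the other $k-2$ bars must already cover all $\asc(\hat{\pi}^{\star})$ ascents, so $k\ge 2+\asc(\hat{\pi}^{\star})$. For $k=2+\asc(\hat{\pi}^{\star})$ the only admissible bar-multisets are the ascents together with $e_1=0$, and the ascents together with $e_{k-1}=n-1$ (any other extra bar, or a repeated bar, violates condition~(d)); these are distinct since $n-1\notin\asc(\hat{\pi}^{\star})$, each is a genuine $-k$-segmentation (blocks decreasing, condition~(e) not invoked, condition~(f) holds), and each is valid because $\pi_n=n$ excludes the invalid case of Definition~\ref{def:invalid}. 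This gives $\N(\pi)=1+\asc(\hat{\pi}^{\star})+1$ and exactly two valid $-\N(\pi)$-segmentations.

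The collapsed case is the substantive one. Here the minimal negative segmentation exists but is invalid, so it is the unique $-k$-segmentation for $k=1+\asc(\hat{\pi}^{\star})$, none is valid, and $\N(\pi)\ge 2+\asc(\hat{\pi}^{\star})$. I would then apply Lemma~\ref{lem:formcollapsed}: by Lemma~\ref{prefixlemsd2} one of $p,q$ is primitive with odd norm, so up to the reverse-complement symmetry we may take $q$ primitive, $\|q\|=n-y$ odd, $n-x=2(n-y)=2c$, whence $z_{x+j}=z_{y+j}$ for $0\le j<c$ (since $p=q^2$) and $\pi_{x+j}-\pi_{y+j}=(-1)^j$ for $1\le j\le c$. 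Every $-k$-segmentation with $k=2+\asc(\hat{\pi}^{\star})$ is the minimal negative segmentation with one extra bar $e$ inserted. If $e$ repeats an existing bar (an empty block, including $e_1=0$ or $e_{k-1}=n$), the associated word changes only by relabelling its letters $\ge m$ by $+1$, so $p=q^2$ persists and the segmentation stays invalid. If $e\in\{1,\dots,n-1\}$ is a new value, then $e\ne\pi_n$ by condition~(f), and by tracking how inserting $e$ turns $\zeta$ into $\zeta'$ one finds that $z'_{x+j}\ne z'_{y+j}$ for some $0\le j<c$ — equivalently $p'\ne(q')^2$, hence validity (with $q'\ne(p')^2$ automatic since $|q'|<|p'|$) — exactly when $e$ separates the consecutive pair $\{\pi_{x+j},\pi_{y+j}\}$, i.e. $e=\min(\pi_{x+j},\pi_{y+j})$ for some $0\le j<c$. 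Since $z_{x+j}=z_{y+j}$ means no bar of the minimal segmentation separates these two values, each $\min(\pi_{x+j},\pi_{y+j})$ is an admissible new bar; these $c$ values are pairwise distinct because $\pi_x,\dots,\pi_{n-1}$ are $2c$ distinct entries, and they all differ from $\pi_n$ for $0\le j<c$. Therefore there are exactly $c=\min\{n-x,n-y\}$ valid $-k$-segmentations with $k=2+\asc(\hat{\pi}^{\star})$, which gives both $\N(\pi)=1+\asc(\hat{\pi}^{\star})+1$ and the count.

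The step I expect to be the main obstacle is the collapsed case: describing precisely how inserting a single bar transforms $\zeta$ (which letters are unchanged, which are relabelled, which block is split) and then showing that validity of the new segmentation is controlled solely by whether the new bar breaks one coordinate of the identity $z_{[x,y-1]}=z_{[y,n-1]}$. The delicate point is excluding an \emph{accidental re-collapse}: that $z'_{[x,y-1]}\ne z'_{[y,n-1]}$ really forces $p'\ne(q')^2$ and $q'\ne(p')^2$. This rests on the rigid length relation $|p'|=2|q'|$ together with primitivity of $q$ (Lemma~\ref{prefixlemsd2}) and Lemma~\ref{lem:doubleback}. The remaining verifications are routine bookkeeping around conditions~(d)--(f) of Definition~\ref{def:segmentation} and the reverse-complement symmetry.
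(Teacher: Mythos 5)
Your proposal is correct and follows essentially the same route as the paper: reduce via Theorem~\ref{characterization} to counting valid $-k$-segmentations for the least admissible $k$, observe that bars are forced at the ascents of $\hat{\pi}^{\star}$, and then handle the regular, cornered, and collapsed cases by analyzing the one extra bar, using Lemma~\ref{lem:formcollapsed} to identify the $c=\min\{n-x,n-y\}$ admissible insertion points in the collapsed case. You are in fact slightly more explicit than the paper on a few points the paper leaves implicit (the repeated-bar case, ruling out $q'=(p')^2$ by the length relation $|p'|=2|q'|$, and the pairwise distinctness of the $c$ choices), but the underlying argument is the same.
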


\begin{proof} If $\pi$ is regular, then the minimal negative segmentation is the $-\N(\pi)$-segmentation of $\hat{\pi}^{\star}$.   If $\pi$ is cornered, then either part (d) or part (e) of Definition~\ref{def:segmentation} applies, requiring an additional index which is not an ascent of $\hat\pi$. Choosing either $e_1 = 0$ or $e_{k-1} = n-1$ defines two distinct segmentations, which are valid because $\pi_n \in \{1, n\}$ in each case.

If $\pi$ is collapsed, then the minimal negative segmentation $E'$ is not valid, and we require an additional index which is not an ascent of $\hat{\pi}^{\star}$. Letting $c = \min \{ n-x, n-y\}$, the unique prefix $\zeta$ defined by $E'$ satisfies $z_{[n-2c, n-c-1]} = z_{[n - c, n-1]}$, thus giving $c$ pairs of equal letters, $z_{n-j} = z_{n-c-j}$ for $1\le j\le c$.  By placing an index $e_t$ such that $\pi_{n-j} < e_{t} \leq \pi_{n-c-j}$ (or $\pi_{n-j} < e_{t} \leq \pi_{n-c-j}$) for some $j$, the corresponding prefix $\zeta'$ satisfies $z'_{n-j} \neq z'_{n-j-c}$, giving a valid $-(2+ \asc(\hat{\pi}^{\star}))$-segmentation of $\hat{\pi}^{\star}$.  
By Lemma~\ref{lem:formcollapsed}, the values $\pi_{n-j}$ and $\pi_{n-c-j}$ are consecutive for all $1 \leq j < c$.  It follows that, for each $j$ with $1 \leq j < c$, there is exactly one choice of $e_t$ between $\pi_{n-j}$ and $\pi_{n-c -j}$, which forces $z'_{n-j} \neq z_{n-j-c}'$ in the associated prefix.  Similarly, when $j = c$, there is only one choice of index between $\pi_y$ and $\pi_{x}$, since condition~(f) in Definition~\ref{def:segmentation} requires that $e_t \neq \pi_n$.  This gives a total of $c$ choices for $e_t$, hence there are $c$ distinct prefixes arising from valid $-\N(\pi)$-segmentations of $\hat{\pi}^{\star}$.  
\end{proof}

\begin{example} Let $\pi = 3651742$. Then $\hat{\pi}^{\star} = 7{\star}62154$ has minimal negative segmentation $E' = (e'_0, e'_1, e'_2) = (0, 5, 7)$.  We write $\hat{\pi}^{\star}_{E'} = |7{\star}621|54|$.  The segmentation $E'$ defines the prefix $\zeta = 010010$, which yields $p = (010)^2 = q^2$. By Theorem~\ref{characterization}, $\pi$ is not realized by the $-2$-shift. We claim $\N(\pi)= 3$ in accordance with Theorem~\ref{numberseg}.  Indeed, we may obtain a valid $-3$-segmentation by placing an additional index to separate one of the three pairs of equal letters $z_{i} = z_{i + 3}$ for $i = 1, 2, 3$.  The distinct prefixes defined by the possible $-3$-segmentations are $\zeta^{(1)} = 121021$, $\zeta^{(2)} = 021020$ and $\zeta^{(3)} = 010020$.
\end{example}

\begin{cor}\label{cor:smallest} The smallest forbidden patterns of the $-k$-shift have length $k +2$, and there are always exactly $4$ of them. 
\end{cor}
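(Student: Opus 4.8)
The plan is to work with the formula $\N(\pi)=1+\asc(\hat{\pi}^{\star})+\overline{\epsilon}(\hat{\pi}^{\star})$ of Theorem~\ref{numberseg}, together with the bounds $\asc(\hat{\pi}^{\star})\le n-2$ and $\overline{\epsilon}(\hat{\pi}^{\star})\le 1$ valid for any $\pi\in\S_n$. First I would establish that $\N(\pi)\le n-1$ for every $\pi\in\S_n$ with $n\ge3$; by Equation~\eqref{eq:barN} and Corollary~\ref{cor:containment}, this means every pattern of length at most $k+1$ belongs to $\Al(\Sigma_{-k})$ (the patterns of length at most $2$ are trivially realized), so the smallest forbidden patterns of $\Sigma_{-k}$ have length at least $k+2$. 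The bound $\N(\pi)\le n-1$ follows once one shows that $\asc(\hat{\pi}^{\star})=n-2$ forces $\pi$ to be the identity $\mathrm{id}_n=12\cdots n$ or the reverse $\mathrm{rev}_n=n(n{-}1)\cdots1$: indeed $\asc(\hat{\pi}^{\star})=n-2$ says that $\hat{\pi}^{\star}$ with the $\star$ deleted is increasing, and since $\hat{\pi}^{\star}$ is a fixed-point-free $n$-cycle this rigidity leaves only these two marked cycles. Both $\mathrm{id}_n$ and $\mathrm{rev}_n$ are regular, so $\overline{\epsilon}=0$ and $\N=n-1$, which also rules out the a priori possibility $\N(\pi)=n$.

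Next I would exhibit four forbidden patterns of length $n=k+2$, namely $\mathrm{id}_n$, $\mathrm{rev}_n$, $\alpha_n:=n(n{-}1)\cdots3\,1\,2$ and $\beta_n:=1\,2\cdots(n{-}2)\,n\,(n{-}1)$. For $\mathrm{id}_n$ and $\mathrm{rev}_n$ we have just seen that $\asc(\hat{\pi}^{\star})=n-2$ and $\overline{\epsilon}=0$, hence $\N=n-1>k$. For $\alpha_n$ a direct computation gives $\widehat{\alpha_n}^{\star}=(2,\star,1,3,4,\dots,n{-}1)$, so $\asc(\widehat{\alpha_n}^{\star})=n-3$; moreover $\alpha_n$ is collapsed, because here $x=n-2$ and $y=n-1$, so $n-x=2(n-y)$ with $n-y=1$ odd, and $(\alpha_n)_{x+1}-(\alpha_n)_{y+1}=1-2=-1=(-1)^1$, whence Lemma~\ref{lem:formcollapsed} applies and $\overline{\epsilon}(\widehat{\alpha_n}^{\star})=1$, giving $\N(\alpha_n)=n-1>k$. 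Finally $\beta_n=\alpha_n^{\,c}$, and the complement symmetry described after Example~\ref{ex:sym} preserves $\asc(\hat{\pi}^{\star})$ and (for $\sigma=-^k$, where $\|d\|=|d|$) preserves validity of $\sigma$-segmentations, so $\N(\beta_n)=\N(\alpha_n)=n-1>k$. These four permutations are pairwise distinct for $n\ge4$ (their first and last entries already distinguish them), and $n=k+2\ge4$.

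It then remains to check that no other $\pi\in\S_n$ with $n=k+2$ is forbidden, i.e.\ that $\N(\pi)\ge n-1$ forces $\pi$ to be one of these four. Since $\N(\pi)\le n-1$, we get $\N(\pi)=n-1$, i.e.\ $\asc(\hat{\pi}^{\star})+\overline{\epsilon}(\hat{\pi}^{\star})=n-2$. If $\asc(\hat{\pi}^{\star})=n-2$ then $\pi\in\{\mathrm{id}_n,\mathrm{rev}_n\}$ by the first paragraph. Otherwise $\asc(\hat{\pi}^{\star})=n-3$ and $\overline{\epsilon}=1$, so by Definition~\ref{def:ccr} the permutation $\pi$ is cornered or collapsed. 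A $\vee$-cornered $\pi$ has $\hat{\pi}^{\star}_1=n$ and $\hat{\pi}^{\star}_{n-1}=1$, which forces at least two descents of $\hat{\pi}^{\star}$ when $n\ge4$, so $\asc(\hat{\pi}^{\star})\le n-4$; the same holds for $\wedge$-cornered $\pi$ by the complement symmetry, so $\pi$ cannot be cornered, and must be collapsed. Assuming $p=q^2$ (the case $q=p^2$ is the complement), Lemma~\ref{lem:formcollapsed} applies with $c:=n-y$ odd and $x=n-2c$: it gives $\pi_{x+j}-\pi_{y+j}=(-1)^j$ for $1\le j\le c$, and since also $\pi_x=\pi_n+1$ and $\pi_y=\pi_n-1$, translating these relations into $\hat{\pi}^{\star}$ yields at least $c$ distinct descents of $\hat{\pi}^{\star}$ (one straddling the $\star$, the rest from pairs of adjacent positions), so $\asc(\hat{\pi}^{\star})\le(n-2)-c$ and therefore $c=1$. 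With $c=1$ we get $\pi_{n-2}\pi_{n-1}\pi_n=(v{+}1)(v{-}1)v$ for some $v$; substituting into $\hat{\pi}^{\star}$ and using that $\hat{\pi}^{\star}$ has exactly one descent and no fixed point forces $v=2$, and then reconstructing the $n$-cycle $\hat{\pi}^{\star}$ forces $\pi_1=n$, i.e.\ $\pi=\alpha_n$; the case $q=p^2$ then yields $\pi=\alpha_n^{\,c}=\beta_n$. Hence there are exactly four forbidden patterns of length $k+2$ and none of smaller length.

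The main obstacle is the collapsed case of the last paragraph: one must extract the sharp lower bound $\des(\hat{\pi}^{\star})\ge c$ from the interleaving structure of Lemma~\ref{lem:formcollapsed} in order to force $c=1$, and then run the endgame that pins down $v=2$ and $\pi=\alpha_n$ by exploiting that $\hat{\pi}^{\star}$ is a fixed-point-free cyclic permutation whose $\star$-deleted word has a single descent. The supporting claim that $\asc(\hat{\pi}^{\star})=n-2$ characterizes $\mathrm{id}_n$ and $\mathrm{rev}_n$, though elementary, relies on the same cyclic-permutation bookkeeping.
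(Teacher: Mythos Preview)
Your proposal is correct and follows essentially the same approach as the paper: reduce via Theorem~\ref{numberseg} to the equation $\asc(\hat{\pi}^{\star})+\overline{\epsilon}(\hat{\pi}^{\star})=n-2$, dispose of the case $\asc(\hat{\pi}^{\star})=n-2$ by the fixed-point-free $n$-cycle argument (yielding $\mathrm{id}_n$ and $\mathrm{rev}_n$), rule out cornered permutations by a descent count, and in the collapsed case force $c=1$ and then pin down $\pi$. The one genuine variation is in the collapsed step: you invoke Lemma~\ref{lem:formcollapsed} to produce $c$ explicit descent positions of $\hat{\pi}^{\star}$ (the $\star$-straddling one at $\pi_n-1$, plus one at $\min(\pi_{x+j},\pi_{y+j})$ for each $1\le j\le c-1$), whereas the paper argues through the minimal negative segmentation that each of the $c$ pairs of equal letters forces a segment of length at least $2$, and with only one descent available such a segment is unique and of length at most $2$. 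These are equivalent viewpoints, and your use of the complement symmetry to transfer the $p=q^2$ analysis to $q=p^2$ is a clean shortcut the paper does not take.

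Two places in your sketch deserve a line of justification when you write it out. First, the $c$ descent positions you exhibit are indeed distinct: they are values of $\pi$ at $c$ distinct indices among $\{y,x+1,\dots,y-1,y+1,\dots,n-1\}$. Second, the endgame ``forces $v=2$'' works because for $v\ge 3$ the increasing block $\hat{\pi}^{\star}_1<\cdots<\hat{\pi}^{\star}_{v-1}=v$ must take exactly the values $1,\dots,v-2$ at positions $1,\dots,v-2$ (the values $v-1$ and $v$ being already placed), creating a fixed point; and once $v=2$, the tail $\hat{\pi}^{\star}_4<\cdots<\hat{\pi}^{\star}_n$ is the increasing list of $\{3,\dots,n\}\setminus\{\pi_1\}$, which has $\hat{\pi}^{\star}_n=n$ as a fixed point unless $\pi_1=n$.
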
 

\begin{proof} We will show that there are exactly $4$ permutations $\pi \in \S_{n}$ such that $\N(\pi) > n-2$, and in each case we have $\N(\pi) = n-1$. These permutations are  $12\ldots (n{-}1)n$, $n(n{-}1)\ldots 21$, $12\ldots (n{-}2)n(n{-}1)$ and $n(n{-}1) \ldots 312$. It will follow $\N(\pi) \leq k$ for every $\pi \in \S_{k+1}$, but $\N(\pi)=k+1$ for exactly four patterns $\pi\in\S_{k+2}$, and so these are the smallest forbidden patterns of the $-k$-shift.

By Theorem~\ref{negativeshift}, the only way for $\pi \in \S_{n}$  to have $\N(\pi) > n-2$ is if $\asc(\hat{\pi}^{\star})=n-2$, or if $\asc(\hat{\pi}^{\star})\ge n-3$ and $\overline{\epsilon}(\hat{\pi}^{\star}) = 1$.
Suppose first that $\hat{\pi}^{\star} \in \mathcal{C}_n^{\star}$ has $n-2$ ascents, and thus no descents.  Consider two cases depending on the value of $\pi_1$, which is the entry in $\hat{\pi}^{\star}$ that is replaced by~$\star$.  
\begin{itemize}
\item Case $\pi_1 = 1$. Let $j = \pi_n$, and so $\hat{\pi}^{\star}_j = \star$.   Since $\hat{\pi}^{\star}$ does not have any descents,  we must have $\hat{\pi}^{\star}= 234\ldots j {\star}(j{+}1)\ldots n$.
Since $\hat{\pi}^{\star}$ is a cycle, we must have $j = n$.  Thus, $\hat{\pi}^{\star} = 23\ldots n \star$, and  so $\pi = 12\ldots (n{-}2)(n{-}1)n$ and $\N(\pi) = n{-}1$.  
\item Case $\pi_1 \neq 1$. Since $\hat{\pi}^{\star}_{1} \neq 1$ because $\hat{\pi}^{\star}$ is a cycle, the only possibility  is that $\hat{\pi}^{\star}_1 \hat{\pi}^{\star}_2 = {\star}1$, since otherwise the index $j$ for which $\hat{\pi}^{\star}_j = 1$ would be a descent of $\hat\pi$. Moreover, using that $\hat{\pi}^{\star}$ has no descents and no fixed points, the only possibility is that $\hat{\pi}^{\star} =  {\star}12\ldots (n{-}1)$, and so $\pi = n(n{-}1)(n{-}2) \ldots 21$ and $\N(\pi) = n-1$.  
\end{itemize}

Next we consider the cases when $\overline{\epsilon}(\hat{\pi}^{\star}) = 1$, which means that $\pi$ is cornered or collapsed.  If $\pi \in \S_{n}$ is $\vee$-cornered, we have $\hat{\pi}^{\star}_1 \hat{\pi}^{\star}_2 = \star n$ and $\hat{\pi}^{\star}_{n} = 1$.  Since $1, 2$ and $n-1$ are not ascents of $\hat{\pi}^{\star}$, it follows that  $\asc(\hat{\pi}^{\star})\le n-4$.  A similar argument applies to $\wedge$-cornered permutations.  We conclude that there are no cornered permutations $\pi \in \S_{n}$ such that $\N(\pi) = 2 + \asc(\hat{\pi}^{\star}) > n-2$.  

Finally, suppose that $\pi \in \S_{n}$ is a collapsed permutation with $\asc(\hat{\pi}^{\star})\ge n-3$, and thus $\des(\hat{\pi}^{\star})\le 1$.  
Suppose first that the minimal negative segmentation $E = (e_0, e_1, \ldots, e_{n-1})$ defines a prefix with $q = p^2$.  For each pair of equal letters  $z_{n-x-i} = z_{n-i} = j$ (where $1 \leq i \leq n-x$) in the prefix $\zeta$ defined by $E$, we have that $e_j < \pi_{n-x-i}, \pi_{n-i} \leq e_{j+1}$, and so $e_{j+1} - e_j > 1$.  In particular, $e_j + 1$ is a descent of $\hat{\pi}^{\star}$, since the indices in the minimal negative segmentation occur at ascents.  Since $\hat{\pi}^{\star}$ has at most one descent, there is at most one pair of equal letters $z_{n-x-i} = z_{n-i} = j$.  Thus, the fact that $\pi$ is collapsed and $q = p^2$ implies that $|p| = 1$.  Therefore, $\pi_{n-2}\pi_{n-1}\pi_{n} = (\pi_{n}-1) (\pi_{n} + 1) \pi_{n}$, and so $\hat{\pi}^{\star}_{\pi_{n}-1} \hat{\pi}^{\star}_{\pi_{n}} \hat{\pi}^{\star}_{\pi_{n}+1} =  (\pi_{n}+1){\star}\pi_{n}$.  Since $\hat{\pi}^{\star}$ must have $n-3$ ascents and no fixed points because $\hat{\pi}^{\star}$ is a cycle, we must have $\hat{\pi}^{\star} = 2 3 \ldots n\star (n{-}1)$.  Hence, $\pi = 12 \ldots (n{-}2)n(n{-}1)$ and $\N(\pi) = n-1$.  A parallel argument implies that $\pi = n(n{-}1)(n{-}2) \ldots 312$ is the only collapsed permutation with $p = q^2$ and $\asc(\hat{\pi}^{\star})\ge n-3$, and $\N(\pi) = n-1$ in this case as well. \end{proof}

For comparison with Corollary~\ref{cor:smallest}, the analogous result for the $k$-shift, proved in \cite[Thm.~5.13]{EliShift}, states that its smallest forbidden patterns have length $k+2$ and there are exactly $6$ of them.  

\begin{example} The smallest forbidden patterns of the $-4$-shift are $123456$, $654321$, $123465$, $654312$.  The smallest forbidden patterns of the $4$-shift are $615243$, $324156$, $342516$, $162534$, $453621$, $435261$.  
\end{example}

\section{Enumeration for the Negative Shift}\label{sec:enumeration}
The exact enumeration of patterns of length $n$ realized by the $-k$-shift is more complicated than in the positive case (which was solved in~\cite{EliShift}), since the same permutation $\pi$ may correspond to multiple allowed intervals for the $-\N(\pi)$-shift, coming from different segmentations.  When $\pi$ is cornered or collapsed, there is more than one valid $-\N(\pi)$-segmentation of $\hat{\pi}^{\star}$, as shown in Theorem~\ref{negativeshift}. For the case $k=2$, a formula for $\Al_n(\Sigma_{-2})$ was given by Archer \cite[Thm.~5.11]{KAchar}.  

\begin{definition}\label{def:canonical}
Given a permutation $\pi \in \Al_n(\Sigma_{-k})$, the {\em canonical segmentation} of $\hat{\pi}^{\star}$, denoted by $E_{\pi}$, is the $-\N(\pi)$-segmentation described in each case below.  
\begin{itemize}
\item  If $\pi$ is regular, $E_{\pi}$ is the unique $-\N(\pi)$-segmentation of $\hat{\pi}^{\star}$.
\item If $\pi$ is cornered, $E_{\pi}$ is the $-\N(\pi)$-segmentation of $\hat{\pi}^{\star}$ with $e_1 = 0$.
\item If $\pi$ is collapsed,  $E_{\pi}$ consists of the indices of the minimal negative segmentation of $\hat{\pi}^{\star}$ plus the index $\pi_n - 1$.  
\end{itemize}
\end{definition}

\begin{example}
\begin{enumerate}[(a)]
\item For the regular permutation $\pi = 436125$, we obtain $\hat{\pi}^{\star} = 2563{\star}1$, and so $\N(\pi) = 3$ by Theorem~\ref{negativeshift}.  By Definition~\ref{def:canonical}, the canonical segmentation of $\pi$ is the unique $-3$-segmentation of $\hat{\pi}^{\star}$, and so $E_{\pi} = (0, 1, 2, 6)$ and $\hat{\pi}^{\star}_{E_{\pi}} = |2|5|63{\star}1|$.  
\item For the $\vee$-cornered permutation $\pi = 32415$, we obtain $\hat{\pi}^{\star} = 5421{\star}$, and so $\N(\pi) = 2$ by Theorem~\ref{negativeshift}.  There are two $-2$-segmentations of $\hat{\pi}^{\star}$, and  by Definition~\ref{def:canonical}, $E_{\pi}$ is the one with $e_1 =0$.  Therefore, $E_{\pi} = (0, 0, 5)$ and we write $\hat{\pi}^{\star}_{E_{\pi}} = \|5421{\star}|$.  
\item For the collapsed permutation $\pi= 3715624$, we obtain $\hat{\pi}^{\star} = 547{\star}621$.   The minimal negative segmentation of $\hat{\pi}^{\star}$ is $E' = (0, 2, 7)$, defining $\zeta = 110110$.  By Definition~\ref{def:canonical}, $E_{\pi}$ is $E'$ together with the index $\pi_n - 1 = 3$, that is, $E_{\pi} = (0, 2, 3, 7)$, which we represent as $\hat{\pi}^{\star}_{E_{\pi}} = |54|7|{\star}621|$.  Note that the index $\pi_n -1$ produces a bar immediately to the left of~$\star$ in $\hat{\pi}^{\star}$.  The prefix defined by $E_{\pi}$ is $\zeta = 120220$, which satisfies $2=z_{x} \neq z_y=1$.  
\end{enumerate}
\end{example}

\begin{remark}\label{rem:zxzy} If $\pi$ is collapsed, $E$ is a valid $-k$-segmentation of $\hat{\pi}^{\star}$, and $\zeta$ is the prefix defined by $E$, then $E_{\pi} \subseteq E$ if and only if $z_x \neq z_y$.   To see this, first suppose that $E_{\pi} \subseteq E$, which implies that $E$ contains the index $\pi_n - 1$.  Since $\pi_y = \pi_n - 1 < \pi_x$, Definition~\ref{def:segmentation} forces $z_y \neq z_x$.  On the other hand, if $E_{\pi} \not \subseteq E$, then $\pi_n - 1$ cannot an index of $E$, since all other indices in $E_{\pi}$ are ascents and thus must be in $E$.   By Definition~\ref{def:segmentation}(f), $\pi_n$ is not an index of $E$ either, and so $E$ contains no index between $\pi_y$ and $\pi_x$ and we conclude that $z_x = z_y$.  
\end{remark}

We say that a  $-j$-segmentation $E= (e_0, e_1,  \ldots, e_j)$ is a {\em refinement} of a $-t$-segmentation  $E' = (e'_0, e'_1 ,\ldots ,  e'_t)$ if $\{e'_0, e'_1 ,\ldots ,  e'_t\} \subseteq \{e_0, e_1,  \ldots, e_j\}$.  With some abuse of notation, we write $E'\subseteq E$ to denote this inclusion of sets.

In Lemmas~\ref{lem:corneredenum} and~\ref{lem:collapsedenum}, we determine the cardinalities of the following two sets, respectively:
\begin{multline}\label{eq:corneredenum} 
C_{n, k}  = \{(\pi,  E): \pi\in\Al_n(\Sigma_{-k}) \text{ is cornered, } \\  \text{ and } E \text{ is a valid $-k$-segmentation of $\hat{\pi}^{\star}$ such that } E_{\pi} \not \subseteq E \},
\end{multline}
\begin{multline}\label{eq:collapsedenum} 
D_{n, k}  = \{(\pi, E): \pi\in\Al_n(\Sigma_{-k})  \text{ is collapsed, }  \\  \text{ and } E \text{ is a valid $-k$-segmentation of $\hat{\pi}^{\star}$ such that } E_{\pi} \not \subseteq E\}. 
\end{multline}

\begin{lem} \label{lem:corneredenum} For $C_{n, k}$ defined in Equation~\eqref{eq:corneredenum}, we have
$$ |C_{n, k}| = 2 \sum_{j =1}^{k-1} j^{n-3}.$$
\end{lem}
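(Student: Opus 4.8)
The plan is to set up a bijection between the set $C_{n,k}$ and a concrete set of words (or equivalently, a tuple of combinatorial data) that is easy to count, and then carry out that count. A cornered permutation $\pi$ is either $\vee$-cornered, meaning $\pi_{n-2}\pi_{n-1}\pi_n = (n-1)1n$ (so $\pi_n = n$, $x$ is undefined, and only $q$ exists), or $\wedge$-cornered, meaning $\pi_{n-2}\pi_{n-1}\pi_n = 2n1$ (so $\pi_n = 1$, $y$ is undefined, and only $p$ exists). These two cases are exchanged by the reverse-complement symmetry described after Example~\ref{ex:seg}, which also matches $-k$-segmentations with $-k$-segmentations and preserves validity; so it suffices to count the $\vee$-cornered contribution and multiply by $2$, which explains the leading factor of $2$ in the formula.

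First I would pin down, for a $\vee$-cornered $\pi$, exactly which valid $-k$-segmentations $E$ satisfy $E_\pi \not\subseteq E$. By Theorem~\ref{negativeshift}, $\N(\pi) = 1 + \asc(\hat\pi^\star) + 1$, and there are exactly two valid $-\N(\pi)$-segmentations, corresponding to the choices $e_1 = 0$ or $e_{k-1} = n-1$ dictated by condition~(d); the canonical one $E_\pi$ is the one with $e_1 = 0$. For a general $-k$-segmentation with $k \ge \N(\pi)$, the indices still must include all ascents of $\hat\pi^\star$, plus one of the two ``cornering'' indices ($e$-value $0$ before position~$1$, or value $n-1$, i.e. just before $\hat\pi^\star_n$), plus $k - \N(\pi)$ further free indices. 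The condition $E_\pi \not\subseteq E$ should translate, via an argument parallel to Remark~\ref{rem:zxzy}, into the requirement that $E$ uses the ``right-hand'' cornering index $e_{k-1} = n-1$ rather than the left-hand one $e_1 = 0$; the free indices may then be placed anywhere they are allowed. I expect this reduction — correctly identifying that $E_\pi \not\subseteq E$ is equivalent to a single clean structural condition on $E$ — to be the main obstacle, since one has to rule out degenerate placements and make sure validity (no $p = q^2$ or $q = p^2$) is automatic here because $\pi_n = n$ forces $p$ to be undefined.

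Once that reduction is in place, the count becomes a matter of counting prefixes $\zeta = z_{[1,n-1]}$ arising from such segmentations. The forced structure of $\hat\pi^\star$ near its end (the entries $\hat\pi^\star_{n-1}\hat\pi^\star_n = 1\star$ and $\hat\pi^\star_1 = n$) together with the requirement that the segmentation use the index $n-1$ rather than $0$ should force the last letters of $\zeta$ to a fixed pattern and force the letter $0$ to appear, while leaving $n-3$ positions of $\zeta$ free to range over the alphabet $\{0,1,\dots,k-1\}$, but with the alphabet size effectively restricted at each such position depending on how many of the $k$ blocks are ``active.'' More precisely, I would argue that the pairs $(\pi, E)$ in the $\vee$-cornered part of $C_{n,k}$ are in bijection with pairs (a choice of $j$ with $1 \le j \le k-1$ recording the number of distinct letters used among the free positions of $\zeta$, i.e.\ the effective block count after accounting for the forced cornering structure) together with a word of length $n-3$ on $j$ letters, giving $\sum_{j=1}^{k-1} j^{n-3}$ for the $\vee$-cornered case. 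Doubling via the symmetry yields $|C_{n,k}| = 2\sum_{j=1}^{k-1} j^{n-3}$. The routine remaining work is to verify the bijection is well-defined and invertible: from $(\pi, E)$ one reads off $\zeta$ and hence the word of length $n-3$, and conversely from a word on $j$ letters one reconstructs $\hat\pi^\star$ (and thus $\pi$) and the segmentation $E$ by inserting the forced cornered tail and the bar at position $n-1$, then checking conditions (a)--(f) of Definition~\ref{def:segmentation} hold and $E_\pi \not\subseteq E$.
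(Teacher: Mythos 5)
Your proposal follows essentially the same route as the paper's proof: split $C_{n,k}$ by the reverse-complement symmetry into its $\vee$-cornered and $\wedge$-cornered halves, observe that validity of segmentations is automatic because $\pi_n\in\{1,n\}$, translate $E_\pi\not\subseteq E$ into the requirement that $e_1>0$ (hence $e_{k-1}=n-1$ by condition~(d)), and then put the remaining pairs $(\pi,E)$ in bijection with a set of prefixes $\zeta$. The one place where your description goes astray is the final count: indexing the sum by ``the number of distinct letters used among the free positions'' and then pairing this with an arbitrary word of length $n-3$ on $j$ letters is inconsistent (such a word need not use all $j$ letters, so this parametrization would overcount). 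The paper instead indexes by the value of $z_{n-2}$: since $\pi_{n-2}=n-1$ is the largest of $\pi_1,\dots,\pi_{n-1}$, Lemma~\ref{flipping} forces $z_i\le z_{n-2}$ for all $i$, while $e_{k-1}=n-1$ forces $z_{n-2}\le k-2$ and $e_1>0$ forces $z_{n-1}=0$; setting $j=z_{n-2}+1\in\{1,\dots,k-1\}$ then leaves the $n-3$ positions $z_1,\dots,z_{n-3}$ free over $\{0,\dots,j-1\}$, giving $j^{n-3}$ words for each $j$ (the inverse map is realized concretely by appending $((k{-}1)0)^\infty$ to such a prefix and invoking Lemma~\ref{beginszeta}). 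With that correction your argument coincides with the paper's.
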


\begin{proof} Let $C_{n, k}^{\vee}$ be the set of pairs $(\pi, E)\in C_{n, k}$ such that $\pi$ is $\vee$-cornered. The remaining pairs in $C_{n,k}\setminus C_{n, k}^{\vee}$ are $\wedge$-cornered. Thus, by symmetry, $|C_{n, k}| = 2|C_{n, k}^{\vee}|$.  We will enumerate $C_{n, k}^{\vee}$. 

For a $\vee$-cornered permutation $\pi$, the proof of Theorem~\ref{negativeshift} shows that the two $-\N(\pi)$-segmentations of $\hat{\pi}^{\star}$ consist of the positions of the ascents of $\hat{\pi}^{\star}$, plus either $e'_1 = 0$ or $e'_{\N(\pi)-1} = n-1$.  The canonical $-\N(\pi)$-segmentation is the one with $e'_1 = 0$, and the remaining indices (other than $e'_0=0$ and $e'_{\N(\pi)} = n$) are the ascents of $\hat{\pi}^{\star}$. Note also that every $-k$-segmentation of $\hat{\pi}^{\star}$ is valid because $\pi_n =n$.

We claim that there is a bijection between $C_{n, k}^{\vee}$ and the set
$$\Gamma_{n, k} =  \{ z_{[1, n-1]} : 0 = z_{n-1} \leq  z_{i}  \leq z_{n-2} \leq k-2 \text{ for all } 1 \leq i \leq n-3 \}.$$
Let $(\pi, E)\in C_{n, k}^{\vee}$. Since $E$ must have indices at the ascents of $\hat{\pi}^{\star}$, the condition $E_{\pi} \not \subseteq E$ forces $e_1 > 0$, and so $E$ has $e_{k-1} = n-1$ by Definition~\ref{def:segmentation}(d).  Letting $\zeta = z_{[1, n-1]}$ be the prefix defined by $E$,
it follows that $0 \leq z_i \leq k-2$ for all $i$, and also $z_{n-1} = 0$ because $e_{1} >0$ and $\pi_{n-1}=1$.  Finally, since $\pi_{n-2}= n-1$ and $\pi_n = n$, we have that $z_{i} \leq z_{n-2}$ for all $i$, by Lemma~\ref{flipping}.  Thus, $\zeta = z_{[1, n-1]} \in \Gamma_{n,k}$.

Conversely, given a word $w_{[1, n-1]} \in \Gamma_{n,k}$, let $w = w_{[1, n-1]}((k{-}1)0)^\infty$. Then $\Pat(w, \Sigma_{-k}, n) = \pi$ for some $\vee$-cornered permutation $\pi$.  By Lemma~\ref{beginszeta}, there exists a unique valid $-k$-segmentation $E = (e_0, e_1, \ldots, e_k)$ of $\hat\pi^\star$ such that $\zeta = w_{[1, n-1]}$. 
We have that $E_{\pi} \not \subseteq  E$ because $\zeta \in \Gamma_{n, k}$ implies that $z_{n-1} = 0$, and so, as in the proof of Lemma~\ref{beginszeta}, we have $e_{1}  = \card{\{i : 1 \leq i \leq n \text{ and } z_{i} < 1\}} > 0$.  
Thus, $(\pi, E)\in C_{n, k}^{\vee}$.  

We now have that $\card{C_{n, k}} = 2\card{C_{n, k}^{\vee}} = 2\card{\Gamma_{n, k}}$ and 
$$\card{\Gamma_{n, k}} = \sum_{j = 1}^{k-1} j^{n-3},$$
which is obtained by counting the number of words in $\Gamma_{n, k}$ according to the value of $z_{n-2}$.  
\end{proof}  

Recall the definition of $\psi$ from Equation~\eqref{eq:psi}.

\begin{lem} \label{lem:collapsedenum} For $D_{n, k}$ defined in Equation~\eqref{eq:collapsedenum}, we have 
$$|D_{n, k}| = 2 \sum_{\substack{c = 1 \\ \text{odd}}}^{\lfloor \frac{n-1}{2}\rfloor} \sum_{j =1}^{k-1}  {c + k - j -2 \choose k -j} j^{n-2c-1}\psi_j(c).$$
 \end{lem}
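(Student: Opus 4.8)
The plan is to set up a bijection between $D_{n,k}^{\vee}$, the pairs $(\pi,E)\in D_{n,k}$ in which $c=\min\{n-x,n-y\}$ is achieved with a fixed parity/orientation convention (say $n-y=c$, so that $q$ has length $c$ and $p=q^2$), and a combinatorial object whose enumeration yields the inner summand. By the reverse-complement symmetry described after Example~\ref{ex:sym} (the involution $\pi\mapsto\pi^c$ together with $\sigma\mapsto\sigma^r$, which for $\sigma=-^k$ fixes the signature and exchanges the roles of $p$ and $q$), every collapsed permutation with $q=p^2$ corresponds to one with $p=q^2$, so $|D_{n,k}|=2|D_{n,k}^{\vee}|$; this explains the outer factor of $2$. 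The outer sum over odd $c$ with $1\le c\le\lfloor\frac{n-1}{2}\rfloor$ will range over the possible values of $|q|$: by Lemma~\ref{lem:formcollapsed}, a collapsed permutation forces $n-x=2(n-y)$ and $n-y=c$ odd, and $q=z_{[y,n-1]}$ has length $c$; the constraint $c\le\lfloor\frac{n-1}{2}\rfloor$ comes from needing $x=n-2c\ge 1$, i.e.\ $2c\le n-1$.

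First I would recall from Theorem~\ref{negativeshift} and Definition~\ref{def:canonical} the structure of the canonical segmentation $E_\pi$ of a collapsed $\pi$: it is the minimal negative segmentation together with the extra index $\pi_n-1$, which is the bar placed between $\pi_y$ and $\pi_x$. By Remark~\ref{rem:zxzy}, for a valid $-k$-segmentation $E$ we have $E_\pi\subseteq E$ iff $z_x\ne z_y$; hence the pairs counted in $D_{n,k}$ are exactly those with $z_x=z_y$, i.e.\ those valid $-k$-segmentations $E$ of $\hat\pi^\star$ having no index between $\pi_y$ and $\pi_x$. For such $E$, $p$ and $q$ satisfy $p=q^2$ (equivalently $z_{[x,y-1]}=z_{[y,n-1]}$), so the word $w=\zeta q^\infty$ lies in $\Int(\pi,\Sigma_{-k})$; note $q^\infty=\wmax$ when $q=(k{-}1)0$ but in general $q^\infty$ is an endpoint that is actually attained since $z_x=z_y$ forces $w_{[n,\infty)}=q^\infty$. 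The key observation is that for a word $w=\zeta q^\infty\in\Wk$ the pattern $\pi=\Pat(w,\Sigma_{-k},n)$ and the segmentation $E$ (via Lemma~\ref{beginszeta}) are recovered, so $(\pi,E)$ is encoded by the prefix $\zeta=z_{[1,n-1]}$. Thus I need to count prefixes $\zeta$ of length $n-1$ over the alphabet $\{0,\dots,k{-}1\}$ such that: (i) $q:=z_{[n-c,n-1]}$ is primitive with $\|q\|=c$ odd, (ii) $z_{[n-2c,n-c-1]}=q$ as well (this is what $p=q^2$ demands and what collapsedness of the resulting $\pi$ forces, given Lemma~\ref{lem:formcollapsed}), (iii) $E_\pi\not\subseteq E$, i.e.\ $z_x=z_y$, which for the segmentation $E$ defined by $w=\zeta q^\infty$ is automatic, and (iv) the word $w=\zeta q^\infty$ indeed induces a permutation (i.e.\ $\Pat$ is defined) and the associated segmentation is a genuine $-k$-segmentation with $k$ blocks — this pins down how many distinct letters must appear and imposes the binomial factor.

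The enumeration then splits according to $j$, the number of letters among $\{0,\dots,k{-}1\}$ that appear in the suffix $z_{[n-2c,n-1]}=qq$ (equivalently in $q$), which ranges over $1\le j\le k-1$ — it cannot be $k$ because, as in the proof of Lemma~\ref{patdefined}, $q^\infty=\wmax=((k{-}1)0)^\infty$ is excluded when $q$ uses all letters would force $q=(k{-}1)0$ which is ruled out by primitivity together with $\|q\|$ odd. For fixed $j$, choosing which $q$ uses those $j$ specified letters and is primitive of length $c$ over them contributes $\psi_j(c)$ (this is where Equation~\eqref{eq:psi} enters, and Lemma~\ref{prefixlemsd2} guarantees $q$ itself, not merely $q^2$, is the right primitive building block since $\|q\|$ odd); the free initial segment $z_{[1,n-2c-1]}$ of length $n-2c-1$ must use only letters that are $\le$ all letters of $q$ in the appropriate sense dictated by Lemma~\ref{flipping} and the position of $\pi_n$, giving $j^{n-2c-1}$ choices — here one must be careful to argue, as in the proof of Lemma~\ref{lem:corneredenum}, that exactly the $j$ letters available to $q$ are available to the prefix (no fewer, no more), using that $\pi_n=1$ or $\pi_n=n$ does not occur for collapsed $\pi$ and instead the constraints come from $\pi_x=\pi_n+1$, $\pi_y=\pi_n-1$. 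Finally the binomial coefficient ${c+k-j-2\choose k-j}$ counts the number of ways to interleave the $k-j$ "unused" letters' block boundaries among the $c$ positions of $qq$ — more precisely, it records the placement of the remaining $e_t$'s (those bars separating letters not occurring in $qq$) relative to the $j$ used letters, equivalently a weak composition; I would derive it by a stars-and-bars count of how the $k$ blocks of the $-k$-segmentation distribute when only $j$ of the letters actually appear in the relevant suffix. The main obstacle I anticipate is precisely this last point: carefully justifying the binomial factor, i.e.\ showing that after fixing the prefix $z_{[1,n-2c-1]}$ (using $j$ letters) and the primitive word $q$ (using a specified $j$-subset), the number of distinct $\pi$ (equivalently, the number of valid ways to "spread" the value set $\{1,\dots,n\}$ into $k$ monotone blocks consistent with $\zeta$ and with $\pi$ collapsed, $E_\pi\not\subseteq E$) is exactly ${c+k-j-2\choose k-j}$ — this requires translating the segmentation bookkeeping into a lattice-path or composition count and checking the off-by-one in the binomial's parameters against the conditions $e_t\ne\pi_n$ from Definition~\ref{def:segmentation}(f) and the collapsedness from Lemma~\ref{lem:formcollapsed}.
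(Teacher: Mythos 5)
Your skeleton matches the paper's in outline: the factor $2$ comes from the symmetry exchanging $p=q^2$ with $q=p^2$, the outer sum runs over odd $c=\min\{n-x,n-y\}\le\lfloor\tfrac{n-1}{2}\rfloor$ by Lemma~\ref{lem:formcollapsed}, and Remark~\ref{rem:zxzy} identifies the pairs with $E_{\pi}\not\subseteq E$ as those with $z_x=z_y$. However, there are genuine gaps. First, your encoding of $(\pi,E)$ via the word $w=\zeta q^\infty$ fails: for that word $w_{[y,\infty)}=q\,q^\infty=q^\infty=w_{[n,\infty)}$, so $\Pat(w,\Sigma_{-k},n)$ is not even defined, and by Theorem~\ref{thm:intervals}(a) the endpoint $q^\infty$ is excluded from the open allowed interval. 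This is repairable (use $t^{(m)}=\zeta q^{2m}\wmax$ as in Lemma~\ref{inducepi} to recover $\pi$ from $\zeta$), but reducing to ``count the admissible prefixes $\zeta$'' is only the start of the argument, and that count is exactly where your plan does not close.

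Second, your parametrization by $j=$ the number of distinct letters appearing in $qq$ cannot yield the stated summand: the number of primitive words of length $c$ using exactly $j$ specified letters is not $\psi_j(c)$ (Equation~\eqref{eq:psi} counts primitive words over a $j$-letter alphabet with no surjectivity requirement), and the initial segment would not contribute exactly $j^{n-2c-1}$ under your definition. In the paper, $j$ is instead $k-|M|$, where $M$ is the multiset recording, for each $2\le m\le c$, how many indices of $E$ separate the consecutive values $\pi_{n-2c+m-1}$ and $\pi_{n-c+m-1}$; deleting those indices collapses $E$ to a $-j$-segmentation whose prefix lives over an honest $j$-letter alphabet, which is what makes $j^{n-2c-1}\psi_j(c)$ correct and makes the binomial count multisets of size $k-j$ from the $(c-1)$-element set $\{2,\dots,c\}$ --- the element $1$ being excluded precisely because $z_x=z_y$. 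Your reading of the binomial as interleaving among the $c$ positions of $qq$ would give $\binom{c+k-j-1}{k-j}$, the wrong parameters. Finally, the hardest half of the paper's proof --- the inverse map, i.e.\ showing that every pair consisting of such a $j$-ary word and a multiset $M$ genuinely arises from a collapsed $\pi\in\Al_n(\Sigma_{-k})$ with a valid $-k$-segmentation, which requires building $\pi$ from the pattern of an auxiliary word and verifying all of Definition~\ref{def:segmentation} --- is absent from your plan; you flag the binomial as the main obstacle, but surjectivity of the encoding is at least as delicate.
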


\begin{proof} Let $D_{n, k}^{<, c}$ (resp. $D_{n, k}^{>, c}$) be the set of pairs $(\pi, E) \in D_{n, k}$ such that $\pi$ satisfies $x < y$ and $n-y  = c$ (resp. $x > y$ and $n - x = c$). By symmetry, $|D_{n, k}^{<, c}| = |D_{n, k}^{>, c}|$. Since $c \leq \frac{n-1}{2}$ and $c$ must be odd by Lemma~\ref{prefixlemsd2}, we have that \begin{equation} \label{eq:dnk} |D_{n, k}| = 2\sum_{\substack{c = 1\\\text{odd}}}^{\lfloor \frac{n-1}{2}\rfloor} |D_{n, k}^{<, c}|.\end{equation}

We will compute $|D_{n, k}^{<, c}|$ by exhibiting a bijection between $D_{n, k}^{<, c}$ and the set $\Gamma_{n, k, c} := \bigcup_{j = 1}^{k-1} Z_{j, c} \times A_{k-j, c},$
where 
$$Z_{j, c} = \{ z'_{[1, n-1]} : 0 \leq z'_i \leq j-1 \text{ for all $i$, and }  z'_{[n-2c, n-c-1]} = z'_{[n-c, n-1]} \text{ is primitive}  \},$$
and $A_{k-j, c}$ is the collection of multisets of  $\{ 2, \ldots, c\}$ of size $k-j$. 
Since $\card{A_{k-j, c}} =  {c + k - j -2 \choose k -j}$ and $\card{Z_{j, c}} = j^{n-2c -1} \psi_j(c)$ by construction, it will follow that 
$$|D_{n, k}^{<, c}| = |\Gamma_{n, k, c}| = \sum_{j = 1}^{k-1} |A_{k-j, c}|\,|Z_{j, c}| =
 \sum_{j =1}^{k-1} {c + k - j -2 \choose k -j} j^{n-2c-1}\psi_j(c),$$
which together with Equation~\eqref{eq:dnk} completes the proof.

Next we describe the bijection. Given $(\pi, E) \in D_{n, k}^{<, c}$, let $\zeta$ be the prefix defined by the $-k$-segmentation $E$ of $\hat{\pi}^{\star}$. We define a multiset $M$ by including each element $m$ for $1\le m\le c$ with multiplicity $|z_{n - 2c + m -1} - z_{n - c + m -1}|$.  Equivalently, for each $1 \leq m \leq c$, the multiplicity of $m$ in $M$ is the number of indices in $E$ equal to $\min\{\pi_{n-2c + m - 1}, \pi_{n-c+m-1}\}$.  By Definition~\ref{def:canonical} and Remark~\ref{rem:zxzy} together with the assumption that $E_{\pi} \not \subseteq E$, we have $z_{n - 2c} = z_{n - c}$, and so $M$ is a multiset of $\{ 2, \ldots, c\}$.  Let $j = k - |M|$, so that $M$ has size $k-j$.  Let $F= (f_0, f_1, \ldots, f_j)$ be the $-j$-segmentation of $\hat\pi^{\star}$  obtained by deleting from $E$ all of the indices  equal to $\min\{\pi_{n-2c + m - 1}, \pi_{n-c + m-1}\}$ for some $1 \leq m \leq c$ (these locations were recorded by the elements in $M$). Since $|\pi_{n-2c + i - 1} - \pi_{n-c+i -1}| = 1$ for all $2 \leq i \leq c$ by Lemma~\ref{lem:formcollapsed}, the prefix $\zeta'$ defined by $F$ satisfies $z'_{[n-2c, n-c-1]} = z'_{[n-c, n-1]}$.  Moreover, $z'_{[n-c, n-1]}$ is primitive by Lemma~\ref{prefixlemsd2}.  Hence, given $(\pi, E) \in D_{n, k}^{<, c}$, we obtain a pair $(z'_{[1, n-1]}, M) \in Z_{j, c} \times A_{k-j, c}$. See Example~\ref{ex:collapsedenum} for an illustration of this construction.

\medskip

In the rest of this proof we will show that the above map is a bijection by describing its inverse. Given $(z'_{[1, n-1]}, M) \in Z_{j, c} \times A_{k-j, c}$, we will recover the unique pair $(\pi, E) \in D_{n, k}^{<, c}$ that maps to it.  To obtain $\pi$, we define an intermediary permutation $\pi' \in \S_{n-c}$. Let $z' = z'_{[1, n-1]}(z'_{[n-c, n-1]})^{2n}((k{-}1)0)^\infty$, and let $\pi' = \Pat(z', \Sigma_{-k}, n-c)$, which is defined because $c$ being odd implies $(z'_{[n-c, n-1]})^\infty \neq ((k{-}1)0)^\infty$, and so the first $n-c$ shifts of $z'$ are distinct.  Moreover, note that 
 \begin{equation*}\label{eqn:2cc} z'_{[n-2c, \infty)} = (z'_{[n-c, n-1]})^{2n+2}((k{-}1)0)^\infty \gessalt   (z'_{[n-c, n-1]})^{2n+1}((k{-}1)0)^\infty =z'_{[n-c, \infty)},\end{equation*} 
 and so $\pi'_{n-2c} > \pi'_{n-c}$.  From $\pi'$, define $\pi \in \S_n$ to be the unique permutation such that
\begin{enumerate}[(i)]
\item  $\pi_{n-2c} = \pi_{n} + 1$ and $\pi_{n-c} = \pi_n - 1$,
\item  $\pi_{n-2c + j} - \pi_{n-c + j} = (-1)^j$ for all $1 \leq j < c$, and 
\item  for $1 \leq i, j \leq n-c$, we have $\pi_i < \pi_{j}$ if and only if $\pi'_{i} < \pi'_{j}$.
\setcounter{count}{\value{enumi}}
\end{enumerate}

In particular, $\pi$ satisfies $n-2c = x < y = n-c$ (with $x$ and $y$ as in Definition~\ref{def:pq}), and $\pi$ is collapsed by Lemma~\ref{lem:formcollapsed} together with condition~(ii).  
As a consequence of the above conditions, we also obtain 
\begin{enumerate}[(i)]
\setcounter{enumi}{\value{count}}
\item for $1 \leq i, h \leq n$, if $\pi_i < \pi_h$ then $z'_{i} \leq z'_h$;
\item for $1 \leq j \leq n-c$, we have $\pi_{n-c} < \pi_j$ if and only if $\pi'_{n-2c} \le \pi'_{j}$.
\end{enumerate}

Property (iv) is clear by construction. To see why (v) follows, suppose that $1 \leq j \leq n-c$ and $\pi_{n-c} < \pi_j$.  Then we must have $\pi_n=\pi_{n-c} + 1 < \pi_j$, since $j \neq n$, and thus $\pi_{n-2c} = \pi_{n-c} + 2  \le \pi_j$ using condition~(i).  By condition~(iii), we conclude that $\pi'_{n-2c} \leq \pi'_{j}$.  Conversely, if $\pi'_{n-2c} \leq \pi'_{j}$, then by condition~(iii), $\pi_{n-2c} = \pi_{n-c} + 2  \le \pi_j$, and so $\pi_{n-c}  < \pi_j$.

Let $F = (f_0, f_1, \ldots, f_j)$ be given by
\begin{equation} \label{eq:ft} f_t = \card{\{i : z'_{i}< t \text{ and }  1 \leq i \leq n \}} \end{equation} for $0 \leq t \leq j$. Next we show that $F$ is a $-j$-segmentation of $\hat{\pi}^{\star}$.

First of all, since $\pi$ is collapsed, we have $\pi_n \notin \{1, n\}$, and so conditions (b), (c), (d) and (e) of Definition~\ref{def:segmentation} trivially hold.  It remains to show that conditions (a) and (f) hold. 
\smallskip

To show that condition~(a) in Definition~\ref{def:segmentation} holds,
we will show that $\hat{\pi}^{\star}_{f_t + 1} \hat{\pi}^{\star}_{f_t + 2} \ldots \hat{\pi}^{\star}_{f_{t+1}}$ is decreasing for $0 \leq t \leq j-1$, disregarding the entry $\hat{\pi}^{\star}_{\pi_n} = \star$.  
Our goal is to prove that 
\begin{equation}\label{eq:claim1}
\text{for all $i, h < n$ such that $f_t < \pi_i < \pi_h \leq f_{t+1}$, we have $\hat{\pi}^{\star}_{\pi_i} > \hat{\pi}^{\star}_{\pi_h}$.} 
\end{equation}
The first step is to reduce this task to proving the following statement instead:
\begin{equation}\label{eq:claim2}
\text{for any $i, h < n-c$ such that $f_t < \pi_i < \pi_h \leq f_{t+1}$, we have $\hat{\pi}^{\star}_{\pi_i} > \hat{\pi}^{\star}_{\pi_h}$.}
\end{equation}

To show that~\eqref{eq:claim2} implies~\eqref{eq:claim1}, first note that the entries $\hat{\pi}^{\star}_{\pi_{n-2c}} \hat{\pi}^{\star}_{\pi_n} \hat{\pi}^{\star}_{\pi_{n-c}} = \hat{\pi}^{\star}_{\pi_n - 1} \hat{\pi}^{\star}_{\pi_n} \hat{\pi}^{\star}_{\pi_n+1} = \pi_{y+1}{\star}\pi_{x+1}$ do not contain an ascent since, taking $j=1$ in Lemma~~\ref{lem:formcollapsed}, we have $\pi_{x+1} - \pi_{y+1} = -1$. 
Again using Lemma~\ref{lem:formcollapsed}, we have $\hat{\pi}^{\star}_{\pi_{n-2c + j}} - \hat{\pi}^{\star}_{\pi_{n-c+j}} = \pi_{n-2c + j+1} - \pi_{n-c + j+1} = (-1)^{j+1}$ for any $1 \leq j \leq n-c-1$.  When $j$ is odd, this gives that $\pi_{n-2c + j} = \pi_{n-c+j} + 1$ and $\hat{\pi}^{\star}_{\pi_{n-2c + j}} =\hat{\pi}^{\star}_{\pi_{n-c + j}}-1$.  Similarly, when $j$ is even, this gives $\pi_{n-c + j} = \pi_{n-2c+j} + 1$ and $\hat{\pi}^{\star}_{\pi_{n-c + j}} = \hat{\pi}^{\star}_{\pi_{n-2c + j}}-1$.  Thus, for $1 \leq j \leq n-c$,  the entries $\hat{\pi}^{\star}_{\pi_{n-2c + j}}$ and $\hat{\pi}^{\star}_{\pi_{n-c+j}}$ are adjacent in $\hat{\pi}^{\star}$ and form a descent with the property that $|\hat{\pi}^{\star}_{\pi_{n-c+j}} - \hat{\pi}^{\star}_{\pi_{n-2c + j}}| = 1$.  Hence,~\eqref{eq:claim1} will follow once we prove~\eqref{eq:claim2}.

By Equation~\eqref{eq:ft}, $z'_{[1, n]}$ has $f_t$ letters smaller than $t$ and $f_{t+1}$ letters smaller than $t+1$. 
By property (iv), if $f_t < \pi_i$, then $z'_i$ is not one of the $f_t$ smallest letters, and if $\pi_i \le f_{t+1}$, then 
$z'_i$ is one of the $f_{t+1}$ smallest letters. It follows that
\begin{align}\label{eq:claim3}
f_t < \pi_i &\Rightarrow z'_i  \ge t, \\ 
\label{eq:claim4}
\pi_i \le f_{t+1} &\Rightarrow z'_i < t+1.
\end{align}
In particular, if $f_t < \pi_i \leq f_{t+1}$, then $z'_i  =t$. 

Suppose that $i, h < n-c$ are such that $f_t < \pi_i < \pi_h \leq f_{t+1}$.  By the above argument, $z'_i = z'_h = t$, and by condition~(iii), we have $\pi'_i < \pi'_h$.   Since $\Pat(z', \Sigma_{-k}, n-c) = \pi'$,  we have $z'_{[i, \infty)} \lessalt z'_{[h, \infty)}$, hence $z'_{[i+1, \infty)} \gessalt z'_{[h+1, \infty)}$ and $\pi'_{i+1} > \pi'_{h+1}$. Using condition~(iii) again, we conclude $\hat{\pi}_{\pi_i}^{\star} = \pi_{i+1} > \pi_{h+1} = \hat{\pi}_{\pi_h}^{\star}$.  This concludes the proof that condition~(a) in Definition~\ref{def:segmentation} holds.

Lastly, we must verify condition~(f), which says that $f_t \neq \pi_n$ for all $1 \leq t \leq k-1$.  Suppose for a contradiction that $f_t = \pi_n$.  By Equation~\eqref{eq:claim4} with $t$ in place of $t+1$, we have $z'_{n} < t$. And since $f_t < \pi_x = \pi_n + 1$, Equation~\eqref{eq:claim3} implies that $z'_{x} \ge t$. This contradicts the fact that $z'_n = z'_x$ by construction of $z'$. 
\smallskip

We conclude that $F$ is a $-j$-segmentation of $\hat{\pi}^{\star}$.  Moreover, by Equations~\eqref{eq:claim3} and~\eqref{eq:claim4}, the prefix defined by $F$ and $\pi$ is equal to $z'_{[1, n-1]}$.  

 Since $z'_{[n-2c, n-c-1]} = z'_{[n-c, n-1]}$, there is no index in $F$ at position $\min\{\pi_{n-2c + m-1}, \pi_{n-c+m-1}\}$ for each $m \in \{1, 2, \ldots, c\}$.  Let $E$ be the $-k$-segmentation of $\hat{\pi}^{\star}$ obtained by refining $F$ by adding, for each $m \in M$, the index $\min\{\pi_{n-2c -m-1}, \pi_{n-c + m -1}\}$ as many times as the multiplicity of $m$ in $M$.  Since $1 \notin M$, $E$ does not contain the index $\min\{\pi_{n - 2c}, \pi_{n-c}\} = \pi_y = \pi_n - 1$, and so $E_{\pi} \not \subseteq E$.  Thus, $(\pi, E) \in D_{n, k}^{<, c}$, completing the description of the inverse map.
\end{proof}

\begin{example}\label{ex:collapsedenum} To illustrate the bijection between $D_{n,k}^{<, c}$ and $\Gamma_{n,k,c}$ in the proof of Lemma~\ref{lem:collapsedenum}, let $\pi = 5173264$ and $E = (0,  1, 2, 5, 6, 6, 7)$, so $\hat\pi_E=|7|6|2{\star}1|4\|3|$. By Lemma~\ref{lem:formcollapsed}, $\pi$ is collapsed with $1=x < y=4$ and $c = n-y=3$, and $E$ is a valid $-6$-segmentation of $\hat{\pi}^{\star} = 762{\star}143$. Notice that $E_{\pi} = (0, 3, 5, 7)\not \subseteq E$, and so $(\pi,  E) \in D_{7, 6}^{<, 3}$.  The prefix defined by $E$ is $\zeta = 205213$, which has $|z_1 - z_4| = 0$, $|z_2 - z_5| = 1$ and $|z_3 - z_6| = 2$.  Therefore, the bijection produces the multiset $M = \multiset{2, 3, 3}$.  The $-3$-segmentation of $\hat{\pi}^{\star}$ obtained by deleting from $E$ the indices corresponding to bars in $\hat\pi_E$ between  $\pi_{m}$ and $\pi_{m + 3}$ for $1 \leq m \leq 3$  is $F = (0, 2, 5, 7)$, defining the prefix $\zeta' = 102102$.  Note that $z'_{[1, 3]} = z'_{[4, 6]}$ is primitive, and so $z'_{[1, 6]} \in Z_{3, 3}$.  We obtain the pair $(102102, \multiset{2, 3, 3}) \in Z_{3, 3} \times A_{4, 3}$. 

Conversely, starting from $(102102, \multiset{2, 3, 3}) \in Z_{3, 3} \times A_{4, 3}$, the construction in the proof of Lemma~\ref{lem:collapsedenum} gives $z' = (102)^{16}(50)^\infty$ and $\pi' = \Pat(z', \Sigma_{-6}, 3) = 213$.  The permutation satisfying (i), (ii), (iii) is $\pi = 5173264$.  Next, Equation~\eqref{eq:ft} with $z'_{[1, 7]} = 1021021$ yields $F = (0, 2, 5, 7)$.  Finally, we add additional indices to $F$, with multiplicty, by including the index $\min\{\pi_{m}, \pi_{m+3}\}$ for each $m \in M$. In this case, we include the index $\min\{ \pi_{2}, \pi_5\}  = 1$, and two copies of the index $\min\{\pi_3, \pi_6\} = 6$.  This gives $E = (0, 1, 2, 5, 6, 6, 7)$, resulting in the pair $(\pi, E) = (5173264, (0, 1, 2, 5, 6, 6, 7)) \in D_{7, 6}^{<, 3}$.  
\end{example}

\begin{lem}\label{lem:pnk} For $n \geq 3$ and $k \geq 2$, let 
\begin{multline*} p(n, k) = | \{ (\pi, E) : \pi \in \Al_n(\Sigma_{-k}) \text{ and }   \\ E \text{ is a valid $-k$-segmentation  of $\hat{\pi}^{\star}$ such that } E_{\pi} \subseteq E \}|.
\end{multline*}  
Then 
\begin{align*}\label{eq:pnk}
p(n, k)  & = a(n, k) + (k^2 - 2)k^{n-3} - 2 \sum_{j = 1}^{k-1} j^{n-3} 
- 2 \sum_{\substack{c = 1\\ odd}}^{\lfloor \frac{n-1}{2} \rfloor} \sum_{j =1}^{k-1}  {c + k - j -2 \choose k -j} j^{n-2c-1}\psi_j(c),
\end{align*}
where $a(n,k)$ as given by Equation~\eqref{ank}.
\end{lem}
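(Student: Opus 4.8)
The plan is to obtain $p(n,k)$ by subtracting two already-computed quantities from the total number of allowed intervals. Recall from Section~\ref{sec:intervals} that $I_n(\Sigma_{-k})$ equals the number of pairs $(\pi,E)$ with $\pi\in\Al_n(\Sigma_{-k})$ and $E$ a valid $-k$-segmentation of $\hat{\pi}^{\star}$, and that by Theorem~\ref{thm:numberalin} (in the case $\sigma_0=\sigma_{k-1}=-$) this number is $a(n,k)+(k^2-2)k^{n-3}$. I would partition the set of all such pairs according to whether $E_{\pi}\subseteq E$ or not: the pairs with $E_{\pi}\subseteq E$ are exactly those counted by $p(n,k)$, and I claim the remaining pairs are exactly those counted by $|C_{n,k}|+|D_{n,k}|$. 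Granting this claim, the stated formula follows immediately by substituting the values of $|C_{n,k}|$ and $|D_{n,k}|$ from Lemmas~\ref{lem:corneredenum} and~\ref{lem:collapsedenum}.

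To establish the claim, I would use that every $\pi$ is exactly one of regular, $\vee$-cornered, $\wedge$-cornered, or collapsed, with the latter three mutually exclusive (as observed after Definition~\ref{def:ccr}). By their very definitions, $C_{n,k}$ is the set of pairs $(\pi,E)$ with $E_{\pi}\not\subseteq E$ and $\pi$ cornered, and $D_{n,k}$ is the set of such pairs with $\pi$ collapsed; so it only remains to rule out a pair $(\pi,E)$ with $\pi$ regular and $E_{\pi}\not\subseteq E$. For regular $\pi$ the canonical segmentation $E_{\pi}$ is the minimal negative segmentation, whose interior indices are precisely the ascent positions of $\hat{\pi}^{\star}$. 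The key point is that every valid $-k$-segmentation $E$ must place a bar at each ascent of $\hat{\pi}^{\star}$: if $i$ were an ascent with no bar at position $i$, then positions $i$ and $i+1$ (or $i$ and $i+2$ when $\hat{\pi}^{\star}_{i+1}=\star$) would lie inside one block of $E$, which by Definition~\ref{def:segmentation}(a) must be decreasing after deleting the $\star$, contradicting that $i$ is an ascent; in the $\star$ case Definition~\ref{def:segmentation}(f) forbids a bar at position $i+1=\pi_n$, so the bar is forced at $i$. Hence $E_{\pi}\subseteq E$ for every valid $-k$-segmentation of a regular $\hat{\pi}^{\star}$, which proves the claim.

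It then follows that
\begin{align*}
p(n,k) &= I_n(\Sigma_{-k})-|C_{n,k}|-|D_{n,k}| \\
&= a(n,k)+(k^2-2)k^{n-3}-2\sum_{j=1}^{k-1}j^{n-3}\\
&\qquad{}- 2\sum_{\substack{c=1\\ \mathrm{odd}}}^{\lfloor (n-1)/2\rfloor}\ \sum_{j=1}^{k-1}\binom{c+k-j-2}{k-j}j^{n-2c-1}\psi_j(c),
\end{align*}
as claimed. The only part of this argument that is not pure bookkeeping is the verification that regular permutations contribute no pair with $E_{\pi}\not\subseteq E$; in effect, all the combinatorial work of this lemma has already been done in Theorem~\ref{thm:numberalin} and Lemmas~\ref{lem:corneredenum} and~\ref{lem:collapsedenum}, and the main obstacle is simply confirming that $C_{n,k}$ and $D_{n,k}$ together account for every pair outside the count $p(n,k)$.
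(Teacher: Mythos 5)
Your proposal is correct and follows essentially the same route as the paper: subtract $|C_{n,k}|$ and $|D_{n,k}|$ from $I_n(\Sigma_{-k})$ as given by Theorem~\ref{thm:numberalin}, after checking that regular permutations contribute no pairs with $E_{\pi}\not\subseteq E$. Your explicit verification that every valid $-k$-segmentation must place a bar at each ascent of $\hat{\pi}^{\star}$ (handling the $\star$ position via condition~(f)) is in fact a more careful justification of the step the paper dismisses as trivial.
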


\begin{proof}  
By Theorem~\ref{thm:numberalin}(c), the total number of pairs $(\pi, E)$, where $\pi \in \Al_n(\Sigma_{-k})$ and $E$ is a valid $-k$-segmentation of $\hat{\pi}^{\star}$, is equal to $I_n(\Sigma_{-k}) = a(n, k) + (k^2 - 2)k^{n-3}$.  To determine $p(n, k)$, we must subtract the number of pairs $(\pi, E)$ for which $E_{\pi} \not \subseteq E$.  

If $\pi$ is regular, there is only one $-\N(\pi)$-segmentation, and so if $E$ is a $-k$-segmentation of $\hat{\pi}^{\star}$, then trivially $E_{\pi} \subseteq E$.  If $\pi$ is cornered or collapsed, we must subtract the number of pairs $(\pi,  E)$ for which $E_{\pi} \not \subseteq E$.  We found the number of such pairs in Lemmas~\ref{lem:corneredenum} and~\ref{lem:collapsedenum}, respectively.  
We conclude
$$p(n, k) = I_n(\Sigma_{-k}) - |C_{n, k}| - |D_{n, k}|,$$
where $C_{n, k}$ and $D_{n, k}$ are defined by Equations~\eqref{eq:corneredenum} and~\eqref{eq:collapsedenum}. 
The formula for $p(n, k)$ now follows.    \end{proof}

\begin{thm}\label{thm:formula} For $n\ge3$ and $k\ge2$, let $b(n, k)$ be the number of permutations $\pi\in\S_n$ with $\N(\pi) = k$, that is, $b(n, k)=\card{\Al_n(\Sigma_{-k})\setminus\Al_n(\Sigma_{-(k-1)})} = \card{\{\pi \in \mathcal{S}_n : \N(\pi) = k\}}$. We have
$$p(n, k) = \sum_{j = 2}^{k} {n +k- j -1  \choose k-j} b(n, j).$$
Equivalently, 
$$b(n,k)=\sum_{j=2}^k (-1)^{k-j}\binom{n}{k-j}p(n,j),$$
where $p(n,j)$ is given by Lemma~\ref{lem:pnk}.
\end{thm}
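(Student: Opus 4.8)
The plan is to prove the first identity by computing, for each fixed $\pi$, the number of valid $-k$-segmentations $E$ of $\hat{\pi}^{\star}$ satisfying $E_{\pi}\subseteq E$, and then summing over $\pi$. By definition $p(n,k)=\sum_{\pi}\#\{E\}$, where $\pi$ ranges over $\Al_n(\Sigma_{-k})$ and $E$ over the segmentations just described; moreover $\Al_n(\Sigma_{-k})=\{\pi\in\S_n:\N(\pi)\le k\}$ by Corollary~\ref{cor:containment}. So it suffices to show that if $\N(\pi)=j$ then the number of valid $-k$-segmentations of $\hat{\pi}^{\star}$ refining $E_{\pi}$ equals $\binom{n+k-j-1}{k-j}$; grouping the $\pi\in\Al_n(\Sigma_{-k})$ by the value $j=\N(\pi)$ then immediately gives $p(n,k)=\sum_{j=2}^{k}\binom{n+k-j-1}{k-j}\,b(n,j)$, since $b(n,j)=\card{\{\pi\in\S_n:\N(\pi)=j\}}$.

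For the refinement count I would use the visualization $\hat{\pi}_E$ of a $-k$-segmentation as a choice of $k-1$ internal bars placed in the $n$ gaps $\{0,1,\dots,n\}\setminus\{\pi_n\}$ of $\hat{\pi}^{\star}$: decreasingness of the blocks (condition~(a)) is automatically preserved when bars are added, since a sub-block of a decreasing block is decreasing; conditions (b),(c) are vacuous because $\sigma_0=\sigma_{k-1}=-$; and condition~(f) is exactly the restriction excluding position $\pi_n$. The canonical segmentation $E_{\pi}$ accounts for $j-1$ of these internal bars (the ascents of $\hat{\pi}^{\star}$, together with the bar $0$ if $\pi$ is cornered, or the bar $\pi_n-1$ if $\pi$ is collapsed), and in every case these bars avoid position $\pi_n$. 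Hence the $-k$-segmentations $E$ with $E_{\pi}\subseteq E$ are in bijection with multisets of the remaining $k-j$ internal bars drawn from the $n$ allowed positions, of which there are $\binom{n+(k-j)-1}{k-j}=\binom{n+k-j-1}{k-j}$. I then need to confirm that every such refinement is genuinely \emph{valid} in the sense of Definition~\ref{def:invalid}, and that (for regular $\pi$) no $-k$-segmentation fails $E_{\pi}\subseteq E$. If $\pi_n\in\{1,n\}$ every $-k$-segmentation is valid, and for cornered $\pi$ the bar at $0$ inherited from $E_{\pi}$ guarantees condition~(d) or~(e). If $\pi_n\notin\{1,n\}$: when $\pi$ is collapsed, $E_{\pi}\subseteq E$ forces $z_x\ne z_y$ by Remark~\ref{rem:zxzy}, which rules out $p=q^2$ and $q=p^2$ (the first letters of $p$ and $q$ are $z_x$ and $z_y$), so $E$ is valid; when $\pi$ is regular, every $-k$-segmentation must contain all ascent bars and hence contains $E_{\pi}$, and an invalid one would, by rerunning the argument of Lemma~\ref{lem:formcollapsed} with the prefix of $E$ in place of the minimal one (the argument uses only Lemma~\ref{flipping} and the shape $\zeta=a\,q q$ or $\zeta=a\,p p$), force the pattern $\pi_{x+i}-\pi_{y+i}=(-1)^i$, making $\pi$ collapsed, a contradiction.

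For the equivalence of the two displayed formulas I would invoke binomial inversion in generating-function form. Setting $b(n,j)=p(n,j)=0$ for $j\le 1$, the first identity says exactly that $F(x):=\sum_{k}p(n,k)x^k$ and $G(x):=\sum_{k}b(n,k)x^k$ satisfy $F(x)=(1-x)^{-n}G(x)$, since $(1-x)^{-n}=\sum_{i\ge0}\binom{n+i-1}{i}x^i$ and the coefficient of $x^k$ in $(1-x)^{-n}G(x)$ is $\sum_{j}\binom{n+k-j-1}{k-j}b(n,j)$. Multiplying through by $(1-x)^{n}=\sum_{i\ge0}(-1)^i\binom{n}{i}x^i$ gives $G(x)=(1-x)^{n}F(x)$, whose $x^k$ coefficient is $\sum_{j=2}^{k}(-1)^{k-j}\binom{n}{k-j}p(n,j)$, which is the second identity; substituting the value of $p(n,j)$ from Lemma~\ref{lem:pnk} then gives the closed form.

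The main obstacle is the claim that the valid $-k$-segmentations refining $E_{\pi}$ are \emph{exactly} the $\binom{n+k-j-1}{k-j}$ combinatorial refinements described above — i.e., simultaneously that no refinement becomes invalid and that, for regular $\pi$, nothing lies outside the refinements of $E_{\pi}$. This is where the three cases (regular, cornered, collapsed) genuinely diverge and where Remark~\ref{rem:zxzy} and the portability of the Lemma~\ref{lem:formcollapsed} argument carry the weight; the combinatorial enumeration of multisets and the binomial inversion are routine.
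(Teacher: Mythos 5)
Your proposal is correct and follows essentially the same route as the paper: count pairs $(\pi,E)$ with $E_\pi\subseteq E$ by the value $j=\N(\pi)$, observe that the extra $k-j$ indices form an arbitrary multiset of the $n$ positions $\{0,1,\dots,n\}\setminus\{\pi_n\}$, and then pass between the two identities via the generating-function identity $F(x)=(1-x)^{-n}G(x)$. The only difference is that you explicitly verify that every multiset refinement of $E_\pi$ remains a \emph{valid} segmentation (via Remark~\ref{rem:zxzy} in the collapsed case and the inherited bar $e_1=0$ in the cornered case), a point the paper's proof leaves implicit; your additional check that regular $\pi$ admits no segmentation outside the refinements of $E_\pi$ is true but not needed for this theorem.
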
  

\begin{proof}
Recall that $p(n, k)$ is the number of pairs $(\pi, E)$ such that $\pi\in\Al_n(\Sigma_{-k})$ and $E$ is a valid $-k$-segmentation of $\hat{\pi}^{\star}$ satisfying $E_{\pi} \subseteq E$. We count these according to the value of $\N(\pi)$. 

Consider a pair $(\pi, E)$ as above with $\N(\pi)=j$, and note that $2 \leq j \leq k$.  Since $E_{\pi} \subseteq E$,  $E$ consists of the $j+1$ indices in $E_{\pi}$ plus $k-j$ additional indices, which are an arbitrary multiset of $\{0,1,\dots,n\}\setminus\{\pi_n\}$. It follows that there are $\binom{ n + k - j -1}{k - j}$ possible ways to choose the locations for these $k-j$ indices. Thus, for each $\pi\in\Al_n(\Sigma_{-k})$ with $\N(\pi) = j$, there are  ${n + k-j-1  \choose k- j}$ valid $-k$-segmentations  $E$ of $\hat{\pi}^{\star}$ such that $E_{\pi} \subseteq E$.
 Summing over all possible values of $j$, we obtain the first equality above. 

To derive the second statement from the first, note that
\begin{eqnarray*}
\sum_{k \ge 2} p(n, k)x^k &=&  \sum_{k \ge 2} \sum_{j = 2}^k {n + k - j - 1 \choose k-j} b(n, j) x^k \\
&=& \left(\sum_{k-j \ge 0} {k-j + n - 1 \choose k-j} x^{k-j}\right)\left(\sum_{j \ge 2} b(n, j)x^j \right) \\
&=&  \frac{1}{(1-x)^n} \sum_{j =2}^n b(n, j)x^j,
\end{eqnarray*}
and so $$\sum_{k=2}^n b(n, k) x^k = (1-x)^n \sum_{j \geq 2} p(n, j) x^j.$$ Extracting the coefficient of $x^k$ on both sides gives the stated formula.
\end{proof}

For completeness, we present the analogous result for positive shifts that was given in \cite[Thm.~5.13]{EliShift}, using segmentations to simplify the original counting argument.

\begin{thm}[\cite{EliShift}] \label{thm:elizalde} For $n \geq 3$ and $k \geq 2$, let $b'(n, k)$ be the number of permutations $\pi \in \S_n$ with $N(\pi) = k$, that is, $b'(n, k) = \card{\Al_{n}(\Sigma_k) \setminus \Al_{n}(\Sigma_{k-1})} = \card{\{\pi \in \mathcal{S}_n : N(\pi) = k\}}$.  We have
$$ \sum_{j = 2}^k {n +k- j -1 \choose k-j} b'(n, j) = I_n(\Sigma_k)= a(n, k) + (k-2)k^{n-2}.$$
Therefore, 
$$b'(n,k)=\sum_{j=2}^k \binom{n}{k-j} (-1)^{k-j} I_n(\Sigma_j),$$
with $I_n(\Sigma_j)$ given by Theorem~\ref{thm:numberalin}.
\end{thm}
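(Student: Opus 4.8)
The plan is to reduce Theorem~\ref{thm:elizalde} to Theorem~\ref{thm:numberalin} by a counting argument that exactly parallels the proof of Theorem~\ref{thm:formula}, using the fact that for the positive shift $\Sigma_k$ every $k$-segmentation of $\hat{\pi}^{\star}$ is valid (by Lemma~\ref{prefixlemsd2}, since $\|d\|=0$ for every word $d$ on $k$ letters when $\sigma=+^k$). This removes any need for a canonical-segmentation bookkeeping like the one forced on us in the negative case by cornered and collapsed permutations.

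First I would recall, as noted after Equation~\eqref{eq:N}, that each $\hat{\pi}^{\star}$ has a unique $N(\pi)$-segmentation, call it $E_\pi^{+}$, whose non-boundary indices are the descents of $\hat{\pi}^{\star}$ together with the one extra index demanded by condition~(b) or~(c) of Definition~\ref{def:segmentation} when $\epsilon(\hat{\pi}^{\star})=1$. Then I would count the pairs $(\pi,E)$ with $\pi\in\Al_n(\Sigma_k)$ and $E$ a ($k$-)segmentation of $\hat{\pi}^{\star}$ in two ways. On one hand, by Theorem~\ref{thm:numberalin} (and since all positive segmentations are valid) this total is exactly $I_n(\Sigma_k)=a(n,k)+(k-2)k^{n-2}$. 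On the other hand, I would sort these pairs by the value $j=N(\pi)$, which ranges over $2\le j\le k$: for a fixed $\pi$ with $N(\pi)=j$, any $k$-segmentation $E$ of $\hat{\pi}^{\star}$ must contain all $j+1$ indices of the unique $j$-segmentation $E_\pi^{+}$ (any segmentation with fewer than $N(\pi)$ parts cannot exist, and the segmentation with exactly $N(\pi)$ parts is unique, forcing $E_\pi^{+}\subseteq E$), plus $k-j$ further indices forming an arbitrary multiset of $\{0,1,\dots,n\}\setminus\{\pi_n\}$ by condition~(f). That gives $\binom{(k-j)+n-1}{k-j}=\binom{n+k-j-1}{k-j}$ choices, exactly as in the proof of Theorem~\ref{thm:formula}. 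Summing over $j$ yields $\sum_{j=2}^k\binom{n+k-j-1}{k-j}\,b'(n,j)=I_n(\Sigma_k)$, which is the first displayed identity.

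For the inversion, I would use the same generating-function manipulation as in Theorem~\ref{thm:formula}: multiplying by $x^k$ and summing over $k\ge2$,
$$\sum_{k\ge2}I_n(\Sigma_k)x^k=\left(\sum_{m\ge0}\binom{m+n-1}{m}x^m\right)\left(\sum_{j\ge2}b'(n,j)x^j\right)=\frac{1}{(1-x)^n}\sum_{j=2}^n b'(n,j)x^j,$$
so $\sum_{k=2}^n b'(n,k)x^k=(1-x)^n\sum_{j\ge2}I_n(\Sigma_j)x^j$, and extracting the coefficient of $x^k$ gives $b'(n,k)=\sum_{j=2}^k\binom{n}{k-j}(-1)^{k-j}I_n(\Sigma_j)$, with $I_n(\Sigma_j)$ supplied by Theorem~\ref{thm:numberalin}. (One should note that the sum in the generating-function step can be taken up to $j=n$ since $b'(n,j)=0$ for $j>n$, which is what lets the upper limit be $k$ rather than $\min\{k,n\}$ after extracting coefficients.)

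The only genuinely substantive point — and hence the step I would treat most carefully — is the claim that every $k$-segmentation $E$ of $\hat{\pi}^{\star}$ satisfies $E_\pi^{+}\subseteq E$, i.e.\ that the minimal (positive) segmentation is contained in every larger one. This follows because the indices of $E_\pi^{+}$ are forced: each descent of $\hat{\pi}^{\star}$ must receive an index in any segmentation by condition~(a), and the extra index when $\epsilon(\hat{\pi}^{\star})=1$ is likewise forced by condition~(b) or~(c); any additional indices beyond these merely subdivide already-monotone runs and are exactly the $k-j$ free choices. Everything else is the routine binomial/generating-function bookkeeping already carried out verbatim in the proof of Theorem~\ref{thm:formula}, so I would simply say "by the argument of Theorem~\ref{thm:formula}" rather than repeat it, citing \cite[Thm.~5.13]{EliShift} for the original enumeration.
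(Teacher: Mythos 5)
Your proposal is correct and follows essentially the same route as the paper: double-count pairs $(\pi,E)$ with $E$ a $k$-segmentation of $\hat{\pi}^{\star}$, once via $I_n(\Sigma_k)$ from Theorem~\ref{thm:numberalin} and once by sorting on $j=N(\pi)$ with $\binom{n+k-j-1}{k-j}$ refinements of the unique minimal segmentation, then invert by the generating-function computation from Theorem~\ref{thm:formula}. Your explicit justification that every $k$-segmentation contains $E_\pi^{+}$ (indices forced at descents plus the condition~(b)/(c) index) is in fact slightly more careful than the paper's, which asserts the containment as trivial from uniqueness of the minimal segmentation.
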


\begin{proof} The number of pairs $(\pi, E)$ such that $\pi \in \Al_{n}(\Sigma_{k})$ and $E$ is a valid $k$-segmentation of $\hat{\pi}^{\star}$  equals $I_n(\Sigma_k) = a(n, k) + (k-2)k^{n-2}$ by Theorem~\ref{thm:numberalin}, verifying the second equality.  To show the first equality, we will count pairs $(\pi, E)$ according to the value of $N(\pi)$.

Consider a pair $(\pi, E)$ as above with $N(\pi)=j$, and note that $2 \leq j \leq k$.  Further recall that all $k$-segmentations are valid since $\|d\|=0$ for any $d$.  As discussed in the paragraph before Definition~\ref{def:ccr}, there is only one $N(\pi)$-segmentation of $\hat{\pi}^{\star}$, which we denote by $E'_\pi$, and so all $k$-segmentations $E$ trivially satisfy that $E'_{\pi} \subseteq E$.  
Therefore, $j+1$ of the indices in $E$ are those of $E'_{\pi}$, and the remaining $k-j$ indices are an arbitrary multiset of $\{0,1,\dots,n\}\setminus\{\pi_n\}$. Hence, for each $\pi$ with $N(\pi) = j$, there are $\binom{ n + k - j -1}{k - j}$ valid $k$-segmentations $E$ of $\hat{\pi}^{\star}$ with $E'_{\pi} \subseteq E$.  Summing over all possible values of $j$, we obtain the first equality above. 

The final equality follows as in the proof of Theorem~\ref{thm:formula}.  
\end{proof}

Theorem~\ref{thm:formula} and Lemma~\ref{lem:pnk} provide a formula for $\card{\Al_n(\Sigma_{-k})}=\sum_{j=2}^k b(n,j)$. The values of $b(n,k)$ for small $n$ and $k$ are given in Table~\ref{tab1}.  For comparison, we have also included the analogous values $b'(n,k)$ for the positive shift, which were obtained in~\cite{EliShift}\footnote{The entry $b'(8,4)=19476$ corrects a typo from \cite[Table 1]{EliShift}.}. 

\renewcommand{\arraystretch}{1.2}
\begin{table}
\centering
\setlength\tabcolsep{4.5pt}
$b'(n,k)=\card{\{\pi \in \S_n: N(\pi) = k\}}$ \\
{
\begin{tabular}{|c|r|r|r|r|r|r|r|r|} 
 \hline
 $n \setminus k$ & 2 & 3 & 4 & 5 & 6 & 7 & \ 8  \\ 
\hline
3&6 & & & & & & \\
\hline
4&18 &6 & & & & & \\
\hline
5& 48 & 66 &6 & & & & \\
\hline
6& 126 & 402 & 186 & 6 & & &  \\
\hline
7& 306 & 2028 & 2232 &468 & 6 & & \\
\hline
8& 738 & 8790 & 19476&  10212 & 1098 & 6 &  \\
\hline
9 & 1716& 35118& 137454& 144564& 41544& 2478& 6 \\
\hline
\end{tabular}} \medskip \\
$b(n, k) = \card{\{\pi \in \S_n: \N(\pi) = k\}}$ 
\\
{
\begin{tabular}{|c|r|r|r|r|r|r|r|r|} 
 \hline 
$n \setminus k$ & 2 & 3 & 4 & 5 & 6 &  7 & \ 8   \\  
\hline
3&6 & & & & & & \\
\hline
4&20 &4 & & & & & \\
\hline
5& 54 & 62 & 4& & & &  \\
\hline
6& 140 & 408 & 168 & 4 & & & \\
\hline
7 & 336 & 2084 & 2208 & 408 & 4 & &\\
\hline 
8 & 800 & 9152 & 19580 & 9820 & 964 &4 & \\
\hline
9 & 1842 & 36674 & 139760 & 142892 & 39514 & 2194 &4 \\
\hline
\end{tabular}}
\smallskip
\caption{Values of $b'(n,k)$ and $b(n, k)$, given by Theorem~\ref{thm:elizalde} and~\ref{thm:formula}, respectively. }
\label{tab1}
\end{table}

\begin{section}{Tent Bounds}\label{sec:tent}

Using Theorem~\ref{thm:intervals}, we obtain an improvement of the best-known bounds on the number of patterns realized by the tent map, the signed shift with signature $\Lambda = +-$. Finding an exact formula for $\card{\Al_n(\Sigma_{\Lambda})}$ is significantly more complicated than for the shift and negative shift.  The difficulty is that, if $\hat{\pi}^{\star}$ has a  $\Lambda$-segmentation $(e_0,e_1,e_2)$, then we may choose $e_1$ to be on either side of the peak of $\hat{\pi}^{\star}$, giving two possible $\Lambda$-segmentations.  However, one or both of the $\Lambda$-segmentations may be invalid, and in the latter case, Theorem~\ref{characterization} implies that $\pi  \notin \Al(\Sigma_\Lambda)$.

The first few values of the sequence $\card{\Al_n(\Sigma_{\Lambda})}$ for $n\ge1$, obtained by brute-force computation, are $1,2,5,12,31,75,178,414,949,2137,4767,10525,23042,\dots$.

By Theorem~\ref{thm:numberalin},
\begin{equation*}I_n(\Sigma_{\Lambda}) = a(n, 2) + 2^{n-2} = \sum_{i = 1}^{n-1} 2^{n-i-1}\psi_2(i) + 2^{n-2}, \end{equation*}
where $\psi_{2}(m)$ is the number of primitive binary words of length $m$.  Let $\overline{\psi}_2(m)$ denote the number of those with an odd number of ones. It is shown in \cite[Thm~3.4]{KACyclicSigned} that 
\begin{equation}\label{eq:oddprimitive} \overline{\psi}_2(m) = \frac{1}{2} \sum_{\substack{d |m \\ d \text{ odd}}} \mu(d) 2^{m/d}. \end{equation}

The lower bound on $\card{\Al_n(\Sigma_{\Lambda})}$ in the next theorem was first found in \cite[Thm.~5.5]{KAchar}.  We present a complete proof using Theorem~\ref{characterization}.  

\begin{thm} \label{thm:tentbounds} For the tent map, the signed shift with signature $\Lambda = +-$ and $n \ge 3$, we have
$$\frac{1}{2}I_n(\Sigma_{\Lambda}) \leq \card{ \Al_n(\Sigma_\Lambda)}  \leq \frac{1}{2}I_n(\Sigma_{\Lambda}) +  \sum_{c = 1}^{\lfloor \frac{n-1}{2} \rfloor} 2^{n - 2c - 1} \overline{\psi}_2(c).$$
\end{thm}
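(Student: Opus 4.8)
The plan is to count, for each $\pi\in\Al_n(\Sigma_\Lambda)$, the number $v(\pi)$ of valid $\Lambda$-segmentations of $\hat{\pi}^{\star}$, using the fact that $I_n(\Sigma_\Lambda)=\sum_{\pi\in\Al_n(\Sigma_\Lambda)}v(\pi)$. First I would pin down the combinatorial skeleton: since $\Lambda={+}{-}$ has length $k=2$, a $\Lambda$-segmentation is determined by the single internal index $e_1$, which (ignoring $\star$) must sit adjacent to the unique peak of the necessarily unimodal word $\hat{\pi}^{\star}$. A short case analysis — only conditions (b) and (f) of Definition~\ref{def:segmentation} are non-vacuous for $\Lambda$, (b) forcing $e_1=0$, which is incompatible with unimodality when $n\ge 3$, and (f) removing exactly the one candidate coinciding with the position $\pi_n$ of $\star$ — shows that for $n\ge 3$ every marked cycle $\hat{\pi}^{\star}$ admits either $0$ or exactly $2$ $\Lambda$-segmentations. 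Writing $A_t=\card{\{\pi\in\Al_n(\Sigma_\Lambda):v(\pi)=t\}}$ for $t\in\{1,2\}$, we then get $\card{\Al_n(\Sigma_\Lambda)}=A_1+A_2$ and $I_n(\Sigma_\Lambda)=A_1+2A_2$, hence the identity $\card{\Al_n(\Sigma_\Lambda)}=\tfrac12\big(I_n(\Sigma_\Lambda)+A_1\big)$. The lower bound is immediate since $A_1\ge 0$, so the entire content of the upper bound is the inequality $A_1\le 2\sum_{c=1}^{\lfloor(n-1)/2\rfloor}2^{n-2c-1}\overline{\psi}_2(c)$.

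For the upper bound, observe that for $n\ge 3$ a permutation with $v(\pi)=1$ has exactly one valid and one invalid $\Lambda$-segmentation, so $\pi\mapsto(\pi,E_{\mathrm{invalid}})$ injects $\{\pi:v(\pi)=1\}$ into the set $\mathcal I$ of pairs $(\pi,E)$ with $E$ an invalid $\Lambda$-segmentation of $\hat{\pi}^{\star}$; thus $A_1\le\card{\mathcal I}$. By Definition~\ref{def:invalid} together with the discussion following Lemma~\ref{lem:doubleback}, such a pair has $\pi_n\notin\{1,n\}$ and a prefix $\zeta$ satisfying either $p=q^2$ with $q$ primitive and $\|q\|$ odd, or $q=p^2$ with $p$ primitive and $\|p\|$ odd; these cannot hold simultaneously (that would force $p=p^4$). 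So $\mathcal I$ decomposes as a disjoint union over the two types and over $c:=\min\{n-x,n-y\}\in\{1,\dots,\lfloor(n-1)/2\rfloor\}$, which is determined by $\pi$ alone. In the first type one has $x<y$, $x=n-2c$, $y=n-c$, and $\zeta=z_{[1,n-2c-1]}\,q\,q$ with $q=z_{[n-c,n-1]}$ primitive of length $c$ and $\|q\|$ odd, so $\zeta$ lies in a set of size at most $2^{n-2c-1}\overline{\psi}_2(c)$; symmetrically for the second type.

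The hard part will be showing that, on each of these pieces, the map $(\pi,E)\mapsto\zeta$ is injective — equivalently, that $\zeta$ together with the type and the value of $c$ reconstructs $\pi$. This is the tent-map analogue of the inverse construction in the proof of Lemma~\ref{lem:collapsedenum}: an invalid $\Lambda$-segmentation forces, by repeated application of Lemma~\ref{flipping} exactly as in Lemma~\ref{lem:formcollapsed} (with the parity of $\|\cdot\|$ playing the role of the parity of an index), that $\pi_{x+j}-\pi_{y+j}=\pm1$ for $1\le j\le c$, so the last $c$ values of $\pi$ are determined by the preceding $c$ ones, while the relative order of $\pi_1,\dots,\pi_{n-c}$ is governed by the allowed interval attached to $\zeta$ and hence by $\zeta$ itself. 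Granting this injectivity, $\card{\mathcal I}\le 2\sum_{c=1}^{\lfloor(n-1)/2\rfloor}2^{n-2c-1}\overline{\psi}_2(c)$, and substituting $A_1\le\card{\mathcal I}$ into $\card{\Al_n(\Sigma_\Lambda)}=\tfrac12(I_n(\Sigma_\Lambda)+A_1)$ gives the claimed upper bound; Equation~\eqref{eq:oddprimitive} may be used afterwards to evaluate $\overline{\psi}_2(c)$ in closed form but is not needed for the estimate. (One checks the whole argument against $n=3$, where $A_1=2$ and the bound is $2\cdot 2^{0}\overline{\psi}_2(1)=2$, so it is tight there.)
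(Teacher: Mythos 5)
Your argument is essentially the paper's: the paper likewise shows that every allowed $\pi$ has exactly two $\Lambda$-segmentations of $\hat{\pi}^{\star}$ (yielding $I_n(\Sigma_\Lambda)\le 2\card{\Al_n(\Sigma_\Lambda)}\le I_n(\Sigma_\Lambda)+\Theta_n$, where $\Theta_n$ counts pairs $(\pi,E)$ with $E$ an invalid $\Lambda$-segmentation), and then evaluates $\Theta_n$ by counting the prefixes of the form $upp$ and $u'qq$ with $p$, $q$ primitive of odd weight, split by the two types and by the length $c$ exactly as you do. The injectivity of $(\pi,E)\mapsto\zeta$ that you single out as ``the hard part'' and leave granted is handled only implicitly in the paper as well (it simply equates $\Theta_n$ with the number of such prefixes), so your proposal matches the published proof in both substance and level of detail.
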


\begin{proof} Recall that $I_n(\Sigma_{\Lambda})$ is the number of pairs $(\pi, E)$ where $\pi \in \Al_n(\Sigma_{\Lambda})$ and $E$ is a valid $\Lambda$-segmentation of $\hat{\pi}^{\star}$. Furthermore, for $\pi \in \Al_n(\Sigma_{\Lambda})$, conditions (b)-(e) in Definition~\ref{def:segmentation} do not apply for since the only possibility is (b).  In this case, $\hat{\pi}^{\star}$ would be required to have $\hat{\pi}^{\star}_1\hat{\pi}^{\star}_2 = {\star}1$ and $\hat{\pi}^{\star}_2 \hat{\pi}^{\star}_3 \ldots \hat{\pi}^{\star}_n$ is decreasing, which is impossible for $n \geq 3$.  Therefore, if $\pi \in \Al_n(\Sigma_{\Lambda})$, then $\hat{\pi}^{\star}$ must be unimodal, and so it has two $\Lambda$-segmentations $E = (e_0, e_1, e_2)$, determined by whether $e_1$ is to the left or right of the peak. Thus, for each $\pi \in \Al_n(\Sigma_{\Lambda})$, the marked cycle $\hat\pi$ has at most two valid $\Lambda$-segmentations. It follows that 
$$I_n(\Sigma_\Lambda) \leq 2 \card{\Al_n(\Sigma_\Lambda)},$$
giving the lower bound.

To obtain an upper bound, let $\Theta_n$ be the number of pairs $(\pi, E)$, where $\pi \in \S_n$ and $E$ is an invalid $\Lambda$-segmentation of $\hat{\pi}^{\star}$. The total number of pairs $(\pi, E)$, where now $E$ is any (valid or invalid) $\Lambda$-segmentation of $\hat\pi$, is then equal to $I_n(\Sigma_\Lambda) + \Theta_n$. Since for each $\pi \in \Al_n(\Sigma_\Lambda)$, the marked cycle $\hat{\pi}^{\star}$ has exactly two $\Lambda$-segmentations, we obtain
$$ 2 \card{\Al_n(\Sigma_\Lambda)} \leq I_n(\Sigma_\Lambda) + \Theta_n.$$

To find $\Theta_n$, we determine the number of invalid $\Lambda$-segmentations according to the prefixes that they define.  By Lemmas~\ref{prefixlemsd2} and~\ref{lem:doubleback}, the prefix defined by an invalid $\Lambda$-segmentation is of the form $\zeta = upp$ where $\|p\| = |\{i: x \leq i <n , z_{i} = 1 \}|$ is odd and $p$ is primitive, or $\zeta = u'qq$ where $\|q\| = |\{i: y \leq i < n , z_i = 1\}|$ is odd and $q$ is primitive.  Counting these prefixes according to $|p|$ and $|q|$, respectively, denoting this length by $c$,  we get
$$\Theta_n = 2 \sum_{c = 1}^{\lfloor \frac{n-1}{2} \rfloor} 2^{n - 2c - 1} \overline{\psi}_2(c),$$
where $\overline{\psi}$ is given by Equation~\eqref{eq:oddprimitive}.  
\end{proof}

The upper and lower bounds in Theorem~\ref{thm:tentbounds} are not tight for $n > 4$. Indeed, having a $\Lambda$-segmentation of $\hat{\pi}^{\star}$ is not enough to guarantee that $\pi$ is an allowed pattern of the tent map.  The following is an example of a permutation that is counted by the upper bound, yet is not an allowed pattern.  

\begin{example} Let $\pi = 51423$, and so $\hat{\pi}^{\star} = 43{\star}21$.  The two possible $\Lambda$-segmentations of $\hat{\pi}^{\star}$ are $(0, 0, 5)$ and $(0, 1, 5)$, defining the prefixes $\zeta = 1011$ and $\zeta= 1111$, respectively.  However, both $\Lambda$-segmentations are invalid because, in either case, we have $q^2 = (1)^2 = p$.  It follows that $\pi \notin \Al_5(\Sigma_\Lambda)$.  
\end{example}  

The reason why the lower bound is also not tight is that it is possible for an allowed permutation $\pi$ to appear in exactly one allowed interval. This happens when $\pi$ has one valid and one invalid $\Lambda$-segmentation.  

\begin{example}  Let $\pi = 32514$, and so $\hat{\pi}^{\star} = 452{\star}1$.   One $\Lambda$-segmentation of $\hat{\pi}^{\star}$ is given by $E = (0, 1, 5)$, and so $\hat{\pi}^{\star}_{E} = |4|52{\star}1|$, and defines the prefix $\zeta = 1110$.  Since $p = 10$ and $q = 1110$, this segmentation is valid.  The other $\Lambda$-segmentation of $\hat{\pi}^{\star}$ is given by $E = (0, 2, 5)$, and so $\hat{\pi}^{\star}_{E} =|45|2{\star}1|$, and defines the prefix $\zeta = 1010$.  Since $p^2 = (10)^2 = q$, this segmentation is invalid.  Therefore, $\pi$ is an allowed pattern appearing in exactly one allowed interval.  
\end{example}  

We now give some intuition showing why finding an explicit formula for $\card{\Al_n(\Lambda)}$ is considerably more challenging than for $\card{\Al_n(\Sigma_{k})}$ and $\card{\Al_n(\Sigma_{-k})}$.  Suppose that $\pi \in \S_n$ is such that $\hat{\pi}^{\star}$ has a $\Lambda$-segmentation. Let $\ell$ be the index such that $\pi_{\ell} = n$ except if $\pi_1 = n$, in which case take $\pi_{\ell} = n-1$.  Using Definition~\ref{def:segmentation}, the two $\Lambda$-segmentations of $\hat{\pi}^{\star}$ define prefixes $\zeta$ and $\zeta'$ such that $z_{\ell - 1} \neq z'_{\ell-1}$ and $z_{i} = z'_{i}$ for all $1 \leq i \leq n-1$, $i \neq \ell - 1$. Therefore, to determine the number of $\pi \in \Al_n(\Lambda)$ with exactly one valid $\Lambda$-segmentation, one would have to be able to count such prefixes while tracking the location of each of $\ell$, $x$ and $y$.

The following conjecture remains open for $k \geq 3$. 

\begin{conj}[\cite{KAchar}] For any $n \geq 3$ and any signature $\sigma = \sigma_0 \sigma_1 \ldots \sigma_{k-1}\in\{+,-\}^k$ with $\sigma \neq -^k$, we have $$\card{\Al_n(\Sigma_{\sigma})} \leq \card{\Al_n(\Sigma_{k})} \leq \card{\Al_n(\Sigma_{-k})}.$$ \end{conj}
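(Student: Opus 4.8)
The plan is to reduce both inequalities to statements about descents of $\hat{\pi}^{\star}$, via the characterization of Theorem~\ref{characterization} in the form of Equation~\eqref{eq:N} and Theorem~\ref{numberseg}. With $\des$ and $\asc$ as in Definition~\ref{def:segmentation} one has $\des(\hat{\pi}^{\star})+\asc(\hat{\pi}^{\star})=n-2$ (deleting $\star$ from $\hat{\pi}^{\star}$ leaves $n-1$ entries), and $\pi\in\Al_n(\Sigma_k)$ iff $\des(\hat{\pi}^{\star})+\epsilon(\hat{\pi}^{\star})\le k-1$ while $\pi\in\Al_n(\Sigma_{-k})$ iff $\asc(\hat{\pi}^{\star})+\overline{\epsilon}(\hat{\pi}^{\star})\le k-1$. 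Hence
$$\card{\Al_n(\Sigma_k)}=\card\{\pi\in\S_n:\des(\hat{\pi}^{\star})\le k-1-\epsilon(\hat{\pi}^{\star})\},\qquad\card{\Al_n(\Sigma_{-k})}=\card\{\pi\in\S_n:\des(\hat{\pi}^{\star})\ge n-1-k+\overline{\epsilon}(\hat{\pi}^{\star})\}.$$
So, up to the corrections $\epsilon$ and $\overline{\epsilon}$, the middle inequality $\card{\Al_n(\Sigma_k)}\le\card{\Al_n(\Sigma_{-k})}$ asserts that among the marked cyclic permutations $\hat{\pi}^{\star}\in\C_n^{\star}$ there are at least as many with many descents as with few. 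For unrestricted words this would be an equality via reversal or complementation; the difficulty is that $\C_n^{\star}$ is not closed under those operations.

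For the middle inequality I would first seek an injection from $\{\hat{\pi}^{\star}\in\C_n^{\star}:\des\le k-1\}$ into $\{\hat{\rho}^{\star}\in\C_n^{\star}:\asc\le k-1\}$ and deal with $\epsilon,\overline{\epsilon}$ separately. The obvious candidates do not work: reverse-complement of the word $\hat{\pi}^{\star}$ preserves $\des$ exactly (this is the $\sigma\leftrightarrow\sigma^r$ symmetry of the paper, realized by $\pi\mapsto\pi^c$), while plain reverse and plain complement interchange $\asc$ and $\des$ but destroy cyclicity, so their output is not of the form $\hat{\rho}^{\star}$. This is precisely the obstruction that keeps the conjecture open for $k\ge3$: no cyclicity-preserving, descent-reversing operation on $\C_n^{\star}$ is known. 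A plausible substitute is a ``block reversal'': given a $k$-segmentation of $\hat{\pi}^{\star}$, reverse the entries inside each maximal ascending block to obtain descending blocks, then repair cyclicity by a controlled local move near $\star$ (in the spirit of the canonical segmentation of Definition~\ref{def:canonical} and the collapsed-case analysis of Lemma~\ref{lem:formcollapsed}); one must then check that the image lands in $\C_n^{\star}$, that the resulting decomposition is a \emph{valid} $-k$-segmentation (conditions (a)--(f) of Definition~\ref{def:segmentation} and, crucially, $p\ne q^2$, $q\ne p^2$ from Definition~\ref{def:invalid}), and that the map is injective. I expect enforcing the validity condition to be the crux. If a direct injection is out of reach, the fallback is enumerative: Theorems~\ref{thm:formula} and~\ref{thm:elizalde} give $\card{\Al_n(\Sigma_{-k})}=\sum_{j=2}^k b(n,j)$ and $\card{\Al_n(\Sigma_k)}=\sum_{j=2}^k b'(n,j)$ as explicit inclusion--exclusion sums built from $I_n(\Sigma_{-j})$, $I_n(\Sigma_j)$ and the corrections $|C_{n,j}|$, $|D_{n,j}|$ of Lemmas~\ref{lem:corneredenum} and~\ref{lem:collapsedenum}; the case $k=2$ can be settled from these (indeed $\card{\Al_n(\Sigma_{-2})}-\card{\Al_n(\Sigma_2)}=2^{n-2}-2\ge0$ for $n\ge3$), and for $k\ge3$ one could attempt an induction on $n$ or $k$ from these formulas.

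The outer inequality $\card{\Al_n(\Sigma_\sigma)}\le\card{\Al_n(\Sigma_k)}$ for $\sigma\in\{+,-\}^k$, $\sigma\ne-^k$, is harder still: there is no closed form for $\card{\Al_n(\Sigma_\sigma)}$ for general $\sigma$, and Corollary~\ref{cor:containment} only applies when $\sigma$ contains $+^k$ as a subsequence, which never happens for $\sigma$ of length exactly $k$ other than $\sigma=+^k$. A natural attack is to interpolate between $\sigma$ and $+^k$: starting from a valid $\sigma$-segmentation of $\hat{\pi}^{\star}$, reverse its descending blocks one at a time, producing a chain of permutations whose marked cycles eventually have only ascending blocks (hence admit a $k$-segmentation), and arrange that the number of realized permutations cannot decrease along the chain. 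Each step is again a block reversal, so the obstruction is the same as before: performing it while preserving cyclicity, injectivity, and the boundary conditions (b)--(e) of Definition~\ref{def:segmentation}. No uniform way to do this is currently known, which is exactly why the statement remains a conjecture for $k\ge3$.
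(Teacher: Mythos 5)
This statement is a \emph{conjecture}: the paper does not prove it, and explicitly notes it remains open for $k\ge 3$. You correctly recognize this and do not claim a proof, so there is no genuine gap in the usual sense --- your proposal is an honest analysis of the obstructions rather than an argument that would fail. Your reduction of both memberships to descent/ascent statistics of $\hat{\pi}^{\star}$ via Equation~\eqref{eq:N} and Theorem~\ref{numberseg}, and your diagnosis that the difficulty is the absence of a cyclicity-preserving, descent-reversing involution on $\C_n^{\star}$, are consistent with why the paper leaves the statement open.

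Two points of comparison with what the paper actually does. First, for $k=2$ the paper settles the conjecture completely, and your fallback captures only half of it: you verify the middle inequality via $\card{\Al_n(\Sigma_{-2})}-\card{\Al_n(\Sigma_2)}=2^{n-2}-2\ge 0$, exactly as the paper does from Theorems~\ref{thm:formula} and~\ref{thm:elizalde}, but the outer inequality $\card{\Al_n(\Sigma_{+-})}\le\card{\Al_n(\Sigma_2)}$ is not automatic --- the paper needs the upper bound of Theorem~\ref{thm:tentbounds} together with the symmetry $\card{\Al_n(\Sigma_{+-})}=\card{\Al_n(\Sigma_{-+})}$ to finish $k=2$, and you should cite that explicitly rather than leaving the outer inequality entirely to the speculative block-reversal idea. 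Second, a small correction to your framing of Corollary~\ref{cor:containment}: it does give $\Al_n(\Sigma_\tau)\subseteq\Al_n(\Sigma_\sigma)$ whenever $\tau$ is a subsequence of $\sigma$, so for instance it shows every length-$k$ signature's pattern set contains that of $\Sigma_2$ or $\Sigma_{-2}$ or $\Sigma_{+-}$; it is indeed useless for bounding $\card{\Al_n(\Sigma_\sigma)}$ \emph{above} by $\card{\Al_n(\Sigma_k)}$, which is the direction the conjecture needs, but your phrasing (``only applies when $\sigma$ contains $+^k$'') slightly misstates which containment it yields.
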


Let us show that the above conjecture holds for $k = 2$.  As a corollary to Theorem~\ref{thm:formula}, we have
$$\card{\Al_n(\Sigma_{-2})} = a(n, 2) + 2^{n-2} - 2,$$
which was also obtained previously in \cite[Thm.~5.11]{KAchar}.
Further, as a corollary to \cite[Thm.~5.13]{EliShift}, stated here as Theorem~\ref{thm:elizalde}, we have 
$$\card{\Al_n(\Sigma_{2})} = a(n, 2).$$
Using the symmetry of allowed patterns, discussed immediately after Definition~\ref{def:segmentation}, we have
$$\card{\Al_{n}(\Sigma_{+-})} = \card{\Al_{n}(\Sigma_{-+})}.$$
Together with Theorem~\ref{thm:tentbounds}, we have now verified the conjecture for $k=2$.  
\end{section}

\section{Topological Entropy of Signed Shifts}\label{sec:entropy}
 
Topological entropy is an important measure of the complexity of a dynamical system.  For a given continuous map $f$, the topological entropy is equal to the logarithm of the exponential growth rate of the number of distinguishable orbits. When $f$ is piecewise monotone and not necessarily continuous, the topological entropy may be defined in terms of partitions.  

Let $f$ be a piecewise monotone map of a bounded real interval $I$ and let $\mathcal{P}$ be the a partition of $I$ into the finitely many intervals for which $f$ is strictly monotone.  Now let $\mathcal{P}_n$ be the partition of $I$ into the non-empty sets of the form $A_{i_1} \cap f^{−1}(A_{i_2}) \cap \dots \cap f^{-(n-1)}(A_{i_n})$ for $A_{i_1}, A_{i_2}, \ldots, A_{i_n} \in \mathcal{P}$. The topological entropy of $f$ is taken to be 
$$\lim_{n \rightarrow \infty} \frac{1}{n} \log(\card{\mathcal{P}_n}),$$
which coincides with the definition of topological entropy in terms of covering sets \cite{MSEntropy}.

It is shown in~\cite{BKP} that the topological entropy of a piecewise monotone map $f$ on a real interval $I$ equals the {\em permutation topological entropy} of $f$, which is given by
$$\lim_{n \rightarrow \infty} \frac{ \log(\card{\Al_{n}(f)})}{n-1}.$$
The following consequence of Theorem~\ref{thm:numberalin} provides a combinatorial way to obtain the topological entropy of the signed shift, which is already known, but is typically computed using  other methods~\cite{MSEntropy}.

\begin{cor}  For any $\sigma\in\{+,-\}^k$, the topological entropy of $M_{\sigma}$ is $\log(k)$.   
\end{cor}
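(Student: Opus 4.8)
The plan is to use the permutation topological entropy characterization from~\cite{BKP}, which states that the topological entropy of $M_\sigma$ equals $\lim_{n\to\infty}\frac{\log\card{\Al_n(M_\sigma)}}{n-1}$. Since $\Al_n(M_\sigma)=\Al_n(\Sigma_\sigma)$ by the order-isomorphism lemma proved at the start of Section~\ref{sec:signed}, it suffices to estimate the growth rate of $\card{\Al_n(\Sigma_\sigma)}$. Theorem~\ref{thm:numberalin} gives both a lower bound, via $\card{\Al_n(\Sigma_\sigma)}\ge 1$ trivially but more usefully a way to bound above, and—crucially—the exact value of $I_n(\Sigma_\sigma)$, the number of allowed intervals, which by Theorem~\ref{thm:numberalin} satisfies $\card{\Al_n(\Sigma_\sigma)}\le I_n(\Sigma_\sigma)$.

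The first step is the upper bound. From Theorem~\ref{thm:numberalin} we have, in all three cases for $(\sigma_0,\sigma_{k-1})$,
$$I_n(\Sigma_\sigma)=a(n,k)+O(k^{n-2}),\qquad a(n,k)=\sum_{i=1}^{n-1}k^{n-i-1}\psi_k(i),$$
where $\psi_k(i)=\sum_{d\mid i}\mu(d)k^{i/d}\le k^i$. Hence $a(n,k)\le\sum_{i=1}^{n-1}k^{n-i-1}k^i=(n-1)k^{n-1}$, so $I_n(\Sigma_\sigma)\le (n-1)k^{n-1}+Ck^{n-2}\le C'nk^{n-1}$ for a constant $C'$ depending only on $k$. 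Therefore
$$\limsup_{n\to\infty}\frac{\log\card{\Al_n(\Sigma_\sigma)}}{n-1}\le\limsup_{n\to\infty}\frac{\log(C'nk^{n-1})}{n-1}=\log k.$$

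The second step is the matching lower bound. Here I would use that $\card{\Al_n(\Sigma_\sigma)}$ is at least a constant fraction of $I_n(\Sigma_\sigma)$, or argue directly that $I_n(\Sigma_\sigma)$ itself grows like $k^{n-1}$ up to polynomial factors and that the number of distinct permutations is not too much smaller. Concretely, each $\pi\in\Al_n(\Sigma_\sigma)$ accounts for at most finitely many allowed intervals; in fact the number of $\sigma$-segmentations of a fixed $\hat\pi^\star$ is bounded by a function of $k$ and $n$ that is at most polynomial—more carefully, one can bound the number of valid $\sigma$-segmentations of $\hat\pi^\star$ by $n^{k}$ (choosing $k-1$ cut positions among $n+1$), so $\card{\Al_n(\Sigma_\sigma)}\ge I_n(\Sigma_\sigma)/n^{k}$. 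Since $\psi_k(i)\ge k^i-\sum_{d\mid i,\,d>1}k^{i/d}\ge k^i-i\,k^{i/2}\ge \tfrac12 k^i$ for $i$ large, the $i=n-1$ term of $a(n,k)$ alone gives $a(n,k)\ge\psi_k(n-1)\ge\tfrac12 k^{n-1}$, whence $I_n(\Sigma_\sigma)\ge\tfrac12 k^{n-1}$. Combining, $\card{\Al_n(\Sigma_\sigma)}\ge k^{n-1}/(2n^{k})$, so
$$\liminf_{n\to\infty}\frac{\log\card{\Al_n(\Sigma_\sigma)}}{n-1}\ge\liminf_{n\to\infty}\frac{\log\!\big(k^{n-1}/(2n^{k})\big)}{n-1}=\log k.$$
Together with the upper bound and the permutation-entropy identity, this yields that the topological entropy of $M_\sigma$ is $\log k$.

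The main obstacle is the lower bound: one must be sure that collapsing the count of allowed intervals $I_n$ down to the count of distinct allowed patterns $\card{\Al_n}$ only loses a subexponential (here polynomial) factor. This reduces to bounding the number of valid $\sigma$-segmentations of a single marked cycle $\hat\pi^\star$, which is at most the number of ways to insert $k-1$ internal bars among the $n+1$ gaps, i.e.\ at most $\binom{n+k-1}{k-1}\le n^{k-1}$ for $n\ge 2$—a polynomial bound since $k$ is fixed. With that in hand the squeeze argument above closes, and the routine part is just checking the elementary inequalities $\tfrac12 k^i\le\psi_k(i)\le k^i$ and $a(n,k)\le (n-1)k^{n-1}$, which I would not belabor.
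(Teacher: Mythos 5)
Your proposal is correct and follows essentially the same route as the paper: both reduce to $\Al_n(\Sigma_\sigma)$, sandwich $\card{\Al_n(\Sigma_\sigma)}$ between $I_n(\Sigma_\sigma)$ divided by a polynomial bound on the number of $\sigma$-segmentations per marked cycle (a binomial coefficient counting multisets of $k-1$ cut positions) and $I_n(\Sigma_\sigma)$ itself, and then use $I_n(\Sigma_\sigma)=\Theta(nk^{n-1})$ from Theorem~\ref{thm:numberalin} via the elementary bounds on $\psi_k$. The minor quantitative discrepancies (e.g.\ $\binom{n+k-1}{k-1}$ versus the paper's $\binom{n+k-2}{k-1}$) are immaterial since any polynomial factor vanishes in the limit.
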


\begin{proof} Recall that $\card{\Al_{n}(M_{\sigma})} = \card{\Al_{n}(\Sigma_{\sigma})}$. 
For $\pi\in\Al_n(\Sigma_\sigma)$, the number of distinct prefixes defined by a $\sigma$-segmentation of $\hat\pi$ is at most  $ {n + k -2 \choose k-1}$ because a $-k$-segmentation $E$ is determined by a multiset of $k-1$ indices between $0$ and $n$ excluding $\pi_n$. Thus, each permutation can appear in at most ${n + k -2 \choose k-1}$ allowed intervals.  It follows that \begin{equation}\label{eq:entineq} \frac{I_n(\Sigma_{\sigma})}{{n +k -2 \choose k-1}} \leq \card{\Al_n(\Sigma_{\sigma})} \leq I_n(\Sigma_\sigma).\end{equation}
By Theorem~\ref{thm:numberalin}, $I_n(\Sigma_{\sigma})  = \sum_{i = 1}^{n-1} k^{n-i-1} \psi_k(i) + k^{n-1} + O(k^{n-2})$ in all cases.  Since $\psi_k(i)$ is the number of primitive $k$-ary words of length $i$, we have $\sum_{i = 1}^{n-1} k^{n-i-1} \psi_k(i)\sim(n-1) k^{n-1}$, and so
$I_n(\Sigma_{\sigma})\sim nk^{n-1}$, where we write $f(n)\sim g(n)$ to mean $\lim_{n\to\infty} f(n)/g(n)=1$. To complete the proof, we use Equation~\eqref{eq:entineq} together with the fact that 
$$\lim_{n \rightarrow \infty} \frac{\log(n k^{n-1})}{n-1} =  \log(k) \quad \text{and} \quad 
\lim_{n \rightarrow \infty} \frac{\log(n k^{n-1}) - \log({n + k -2 \choose k-1})}{n-1} =  \log(k).$$
\end{proof}

\bibliographystyle{amsplain}
\bibliography{SignedShift}

\end{document}